\DeclareMathOperator{\Ann}{Ann}
\DeclareMathOperator{\ind}{Ind}
\DeclareMathOperator{\Supp}{Supp}
\DeclareMathOperator{\ffd}{D_{\mathrm{ff}}}
\newcommand{\bfd}{\mathbf{d}}
\newcommand{\mcP}{{\mathcal P}}
\newcommand{\bbF}{{\mathbb F}}
\newcommand{\bbN}{{\mathbb N}}
\newcommand{\bbR}{{\mathbb R}}
\newcommand{\grft}{B^{(n)}}
\newcommand{\gft}{{\mathbb F}_2}
\newcommand{\var}{\mathrm{var}}
\newcommand{\grobner}{Gr\"{o}bner}
\newcommand{\Bn}{B^{(n)}}
\newcommand{\ceil}[1]{\ensuremath{\left\lceil #1 \right\rceil}}
\newcommand{\bn}[2]{\binom{#1}{#2}}
\begin{document}
\title{On the Existence of Semi-regular Sequences}
\author{Timothy J. Hodges}
\email[Timothy Hodges]{timothy.hodges@uc.edu}
\address{University of Cincinnati, Cincinnati, OH 45221-0025, USA}
\thanks{Corresponding author: Timothy Hodges, Department of Mathematical Sciences, University of Cincinnati, Cincinnati, OH 45221-0025, USA, email:timothy.hodges@uc.edu}
\thanks{The first author was partially supported by a grant from the Charles P. Taft Research Center}
\thanks{The second author was partially supported by a scholarship from COLFUTURO/Foundation for the Future of Colombia and by the Maita Levine Fellowship, University of Cininnati, USA}
\author{Sergio D. Molina}
\email[Sergio Molina]{sdaladierm@hotmail.com}
\address{University of Cincinnati, Cincinnati, OH 45221-0025, USA}
\author{Jacob Schlather}
\email[Jacob Schlather]{jacob.schlather@gmail.com}
\address{21 Sartwell Ave. Apt. 2, Somerville, MA 02144, USA}
\keywords{Semi-regularity, finite field}

\subjclass[2010]{Primary 11T55; Secondary 12E05, 12E20, 12Y05, 13A02, 13D02, 13P15, 94A60}

\begin{abstract}
Semi-regular sequences over $\bbF_2$ are sequences of homogeneous elements of the algebra $
B^{(n)}=\mathbb{F}_2[X_1,...,X_n]/(X_1^2,...,X_n^2)
$, which have as few relations between them as possible. They were introduced in order to assess the complexity of \grobner\ basis algorithms such as ${\bf F}_4, {\bf F}_5$ for the solution of polynomial equations. Despite the experimental evidence that semi-regular sequences are common, it was unknown whether there existed semi-regular sequences for all $n$, except in extremely trivial situations.
We prove some results on the existence and non-existence of semi-regular sequences.  In particular, we show that if an element of degree $d$ in $B^{(n)}$ is semi-regular, then we must have $n\leq 3d$. Also, we show that if $d=2^t$ and $n=3d$ there exits a semi-regular element of degree $d$ establishing that the bound is sharp for infinitely many $n$. Finally,  we generalize the result of non-existence of semi-regular elements to the case of sequences of a fixed length $m$.
\end{abstract}
\maketitle

\newtheorem{thm}{Theorem}
\newtheorem{lem}[thm]{Lemma}
\newtheorem{prop}[thm]{Proposition}
\newtheorem{cor}[thm]{Corollary}
\newtheorem{Notation}[thm]{Notation}
\theoremstyle{definition}
\newtheorem{defn}[thm]{Definition}
\newtheorem{example}{Example}
\renewcommand{\thedefn}{}
\newtheorem{Conjecture}{Conjecture}

\theoremstyle{remark}
\newtheorem{notation}{Notation}
\renewcommand{\thenotation}{}
\newtheorem{question}{Question}

\numberwithin{thm}{section}
\numberwithin{equation}{section}

\section{Introduction}

Semi-regular sequences over $\bbF_2$ are sequences of homogeneous elements of the algebra 
$$
B^{(n)}=\mathbb{F}_2[X_1,...,X_n]/(X_1^2,...,X_n^2)
$$
which have as few relations between them as possible. They were introduced in \cite{b,bfs, bfs2,bfsy} in order to assess the complexity of \grobner\ basis algorithms such as ${\bf F}_4, {\bf F}_5$ for the solution of polynomial equations.

Experimental evidence has shown  that  randomly generated sequences tend to be semi-regular \cite[Section 3]{bfs2}. On the other hand it has been observed than many sequences that arise in cryptography, such as those arising from the Hidden Field Equation cryptosystems, are not semi-regular. Despite the experimental evidence that semi-regular sequences are common, it was unknown whether there existed semi-regular sequences for all $n$, except in extremely trivial situations.

We prove here some results on the existence and non-existence of semi-regular sequences. We first look at the most elementary case, that of semi-regular elements (or sequences of length one). It was observed in \cite[Lemma 3.12]{vk}, that there are no quadratic semi-regular elements when $n>6$. On the other hand it is trivial that elements of degree $n$ and $n-1$ must be semi-regular. This raises the question: for which values of $n$ and $d$ do there exist semi-regular elements of degree $d$ in $B^{(n)}$? We prove here that if an element of degree $d\geq 2$ in $B^{(n)}$ is semi-regular, then we must have $n\leq 3d$. We go somewhere towards understanding the sharpness of this bound by determining  when the symmetric polynomials
$$
\sigma_d(x_{1},...,x_{n})=\sum_{1\leq j_1<\cdots<j_d\leq n}x_{{j_1}}\cdots x_{{j_d}}
$$
are semi-regular. In particular we show that if $d=2^t$ and $n=3d$ then $\sigma_{n,d}$ is semi-regular, thus establishing that the bound is sharp for infinitely many $n$.

Thus for any fixed $d$, there are no semi-regular elements in $\Bn$ for $n >3d$. We generalize this result to the case of sequences of a fixed length $m$, though in a predictably less precise fashion. Define the degree of a sequence $\lambda_1, \dots, \lambda_m$ to be the vector ${\bf d}= (\deg \lambda_1, \dots, \deg \lambda_m)$. We show that for all such vectors $\bf d$ there exists an $N$ such that for $n>N$, there are no semi-regular sequences of degree $\bf d$ in $\Bn$.

\section{Semi-Regularity over $\gft$}

Set $B^{(n)}=\mathbb{F}_2[X_1,...,X_n]/(X_1^2,...,X_n^2)$ and define $\Bn_k$ to be the subspace of homogeneous polynomials of degree exactly $k$. Then $\Bn= \bigoplus_{k=0}^n \Bn_k$ and $\Bn_i\Bn_j=\Bn_{i+j}$, so this gives $\Bn$ the structure of a strongly graded $\gft$-algebra. Denote the image of $X_i$ in $\Bn$ by $x_i$. 

\begin{defn} For a graded ring $B=\bigoplus_{k=0}^N B_k$, we define the index of $B$ to be $t$ if $B_{t-1}\neq0$ and $B_k=0$ for all $k\geq t$. We denote this number by $\ind(B)$. If $\lambda_1, \dots, \lambda_m$ is a set of homogeneous elements and $I = (\lambda_1, \dots, \lambda_m)$, then we define $\ind(\lambda_1, \dots, \lambda_m)= \ind(B/I)$. If $B$ is strongly graded (that is, $B_iB_j=B_{i+j}$ for all $i$ and $j$), then
$$
\ind(B/I) = \min \{ d \geq 0 \mid I \cap B_d = B_d\}
$$
\end{defn}

\begin{defn} Let $\lambda_1, \dots, \lambda_m$ be a sequence of homogeneous elements of $\Bn$ of positive degree. The sequence $\lambda_1, \dots, \lambda_m$ is {\em $D$-semi-regular} if for all $i = 1, 2, \dots, m$, if $\mu$ is homogeneous and
$$
\mu\lambda_i \in (\lambda_1, \dots, \lambda_{i-1}) \quad \text{and}\quad \deg(\mu)+\deg(\lambda_i)< D
$$
then $\mu \in (\lambda_1, \dots, \lambda_{i})$.
A sequence of homogeneous polynomials $\lambda_1, \dots, \lambda_m$ is {\em semi-regular} if it is $D$-semi-regular for $D= \ind(\lambda_1, \dots, \lambda_m)$
\end{defn}

Recall that the Hilbert function of a graded ring $B$ is the function $HF_B(k) = \dim B_k$ and the Hilbert series is the series $HS_B(z) = \sum_{k=0}^\infty (\dim B_k) z^k$. The Hilbert function and series for a graded ideal $I$ of $B$ are defined by $HF_I=HF_{B/I}$ and $HS_I(z) = HS_{B/I}(z)$ respectively. For any series $a(z)=\sum_ia_iz^i \in \bbR[[z]]$ , we define the index of $a(z)$, $\ind a(z)$, to be the first $t$ for which $a_t\leq 0$. If such a $t$ does not exist define $\ind a(z)=\infty$. For a series $\sum_ia_iz^i$, we denote by $\left[ \sum_ia_iz^i \right]_t$ the truncated series $\sum_{i=0}^{t-1} a_iz^i$ and by $\left[ \sum_ia_iz^i \right]$ the truncated series $\left[ \sum_ia_iz^i \right]_{\ind(a(z))}$.
 It was asserted in \cite{bfs} that a sequence $\lambda_1, \dots, \lambda_m$ is semi-regular if and only if 
$$
HS_{(\lambda_1, \dots, \lambda_m)}(z) =\left[ \frac{(1+z)^n}{\prod_{i=1}^m(1+z^{d_i})}\right]
$$
where $d_i = \deg \lambda_i$. As noted in \cite{Diem}, the proofs in that article are incomplete. We begin, therefore, by giving a complete proof.

For any ${\bf d} =(d_1, \dots, d_m) \in \bbN^m$, define
$$
T_{{\bf d},n}(z) =\frac{(1+z)^n}{\prod_{i=1}^m(1+z^{d_i})}
$$
and let $t_{{\bf d},n}(j)$ be the coefficient of $z^j$ in $T_{{\bf d},n}(z)$, so that $T_{{\bf d},n}(z)= \sum_{i=0}^\infty t_{{\bf d},n}(j) z^j$.

We begin with some observations on the way truncation behaves with respect to multiplication.

\begin{lem} \label{trunc} Let $u(z), v(z), w(z) \in \bbR[[z]]$. Then 
\begin{enumerate}
  \item $\left[u(z)v(z)\right]_D = \left[ \left[u(z)\right]_D \left[v(z)\right]_D \right]_D = \left[ u(z) \left[v(z)\right]_D \right]_D $
  \item $\left[v(z)\right]_D= \left[w(z)\right]_D \implies \left[u(z)v(z)\right]_D = \left[u(z)w(z)\right]_D$
\end{enumerate}  
\end{lem}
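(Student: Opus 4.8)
The plan is to prove both parts by a direct inspection of coefficients, using the fact that truncation at $D$ depends only on the coefficients of degree $< D$. Write $u(z) = \sum_i u_i z^i$, $v(z) = \sum_i v_i z^i$, $w(z) = \sum_i w_i z^i$. For part (1), the key observation is that for any $j < D$, the coefficient of $z^j$ in $u(z)v(z)$ is $\sum_{k=0}^{j} u_k v_{j-k}$, and every index appearing here, namely $k$ and $j-k$, is at most $j < D$. Hence this sum is unchanged if we replace $v(z)$ by $\left[v(z)\right]_D$ (which only zeroes out coefficients in degrees $\geq D$) or if we replace both $u(z)$ and $v(z)$ by their truncations. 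Therefore $u(z)v(z)$, $\left[u(z)\right]_D \left[v(z)\right]_D$, and $u(z)\left[v(z)\right]_D$ all agree in every degree $< D$, and applying $\left[\,\cdot\,\right]_D$ to each gives the three displayed expressions as equal. One subtlety worth stating explicitly: applying $\left[\,\cdot\,\right]_D$ a second time is harmless because $\left[\,\cdot\,\right]_D$ is idempotent, so $\left[\left[u(z)\right]_D\left[v(z)\right]_D\right]_D$ is well-formed and equals what we want.

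For part (2), suppose $\left[v(z)\right]_D = \left[w(z)\right]_D$, i.e. $v_j = w_j$ for all $j < D$. Using part (1),
$$
\left[u(z)v(z)\right]_D = \left[u(z)\left[v(z)\right]_D\right]_D = \left[u(z)\left[w(z)\right]_D\right]_D = \left[u(z)w(z)\right]_D,
$$
where the middle equality is the hypothesis and the outer two are part (1). This completes the proof.

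I do not anticipate any genuine obstacle here; the lemma is a bookkeeping statement about formal power series, and the only thing to be careful about is keeping the indices straight (ensuring that in a degree-$j$ convolution with $j<D$ no coefficient of degree $\geq D$ is ever touched) and noting the idempotence of truncation so that the iterated brackets are meaningful. If one wanted to be maximally economical, part (1) alone is the real content and part (2) is an immediate formal consequence, so I would spend essentially all the writing effort on the coefficient comparison in part (1).
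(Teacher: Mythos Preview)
Your proof is correct and follows essentially the same idea as the paper's: the paper writes $u(z)=[u(z)]_D+u'(z)z^D$ and $v(z)=[v(z)]_D+v'(z)z^D$ and expands the product, while you do the equivalent coefficient-by-coefficient comparison; part (2) is derived from part (1) in both arguments in exactly the same way.
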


\begin{proof} (1) First note that for any $a(z), c(z) \in \bbR[[z]]$,
$$
[a(z) + c(z) z^D]_D = [a(z)]_D
$$
Define $u'(z), v'(z)$ by  $u(z)=[u(z)]_D+u'(z)z^D$ and $v(z)=[v(z)]_D+v'(z)z^D$. Then
\begin{align*}
\left[u(z)v(z)\right]_D &= \left[([u(z)]_D+ u'(z)z^D)( [v(z)]_D+ v'(z)z^D)\right]_D \\
	& = \left[[u(z)]_D [v(z)]_D+ z^D(u'(z)[v(z)]_D + v'(z)[u(z)]_D + u'(z)v'(z)z^D )\right]_D\\
	& = \left[ \left[u(z)\right]_D \left[v(z)\right]_D \right]_D
\end{align*}
So
$$
\left[ u(z) \left[v(z)\right]_D \right]_D = \left[ \left[u(z)\right]_D \left[v(z)\right]_D \right]_D =\left[u(z)v(z)\right]_D
$$

(2) If $\left[v(z)\right]_D= \left[w(z)\right]_D$, then 
$$
\left[u(z)v(z)\right]_D = \left[u(z)[v(z)]_D\right]_D = \left[u(z)[w(z)]_D\right]_D =\left[u(z)w(z)\right]_D
$$
\end{proof}

Since $\lambda^2=0$ for any homogeneous element $\lambda$ of positive degree, multiplication by $\lambda_i$ is a well-defined map  from $B/(\lambda_1, \dots, \lambda_{i})$ to $B/(\lambda_1, \dots, \lambda_{i-1})$ whose image is $(\lambda_1, \dots, \lambda_{i})/(\lambda_1, \dots, \lambda_{i-1})$. Let $\pi_i$ be the natural projection from $B/(\lambda_1, \dots, \lambda_{i-1})$ to $ B/(\lambda_1, \dots, \lambda_{i})$. Thus we have an exact sequence,
$$
B/(\lambda_1, \dots, \lambda_{i}) \overset{\lambda_i}{\longrightarrow} B/(\lambda_1, \dots, \lambda_{i-1}) \overset{\pi_i}{\longrightarrow} B/(\lambda_1, \dots, \lambda_{i})\to 0
$$
With this notation we can restate the definition of $D$-semi-regularity by saying that a sequence $\lambda_1, \dots, \lambda_m$ is {\em $D$-semi-regular} if and only if the sequence 
$$ 0 \to (B/(\lambda_1, \dots, \lambda_{i}))_{d-d_i} \to (B/(\lambda_1, \dots, \lambda_{i-1}))_{d} \to (B/(\lambda_1, \dots, \lambda_{i}))_{d}\to 0
$$
is exact for all $i=1, \dots, m$ and all $d <D$.

\begin{thm} \label{bfsthm}
Let $B = \grft$ and let $\lambda_1, \dots, \lambda_m$ be a sequence  of homogeneous elements of $B$ with $\lambda_i$ being of degree $d_i$. Set $I = (\lambda_1, \dots, \lambda_m)$ and ${\bf d} =(d_1, \dots, d_m)$.
\begin{enumerate}
  \item If  the sequence $\lambda_1, \dots, \lambda_m$ is $D$-semi-regular then 
  $$ [HS_I(z)]_D= \left[ T_{{\bf d},n}(z) \right]_D $$
and $HF_I(D) \geq t_{{\bf d},n}(D)$.
  \item If the sequence $\lambda_1, \dots, \lambda_m$ is $D$-semi-regular but not $(D+1)$-semi-regular, then $HF_I(D) > t_{{\bf d},n}(D)$.
  \item The sequence $\lambda_1, \dots, \lambda_m$ is semi-regular if and only if the Hilbert series of $I$ is given by
  $$ HS_I(z)= \left[ T_{{\bf d},n}(z) \right] $$
\item If the sequence $\lambda_1, \dots, \lambda_m$ is $D$-semi-regular, then so is $\lambda_{\sigma(1)}, \dots, \lambda_{\sigma(m)}$ for any permutation $\sigma$.
\end{enumerate}
\end{thm}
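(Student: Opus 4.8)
The plan is to funnel all four parts through a single statement, which I will call the \emph{master equivalence}: for every integer $D\ge 1$ (and trivially for $D=0$ or $D=\infty$), the sequence $\lambda_1,\dots,\lambda_m$ is $D$-semi-regular if and only if $[HS_I(z)]_D=[T_{{\bf d},n}(z)]_D$. Write $I_i=(\lambda_1,\dots,\lambda_i)$ (so $I_0=(0)$, $I_m=I$), ${\bf d}^{(i)}=(d_1,\dots,d_i)$, $T_i(z)=T_{{\bf d}^{(i)},n}(z)$, and $P_i(z)=HS_{I_i}(z)$, noting $P_0(z)=T_0(z)=(1+z)^n$. The first step is to read off from the right-exact sequence $B/I_i\overset{\lambda_i}{\longrightarrow}B/I_{i-1}\overset{\pi_i}{\longrightarrow}B/I_i\to 0$, in each degree $d$, the inequality
$$HF_{I_{i-1}}(d)\ \le\ HF_{I_i}(d)+HF_{I_i}(d-d_i),$$
with equality exactly when multiplication by $\lambda_i$ is injective on $(B/I_i)_{d-d_i}$; by the reformulation recorded before the theorem, exactness of the relevant short sequences --- hence $D$-semi-regularity --- is precisely the assertion that equality holds for all $i$ and all $d<D$. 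Writing $f(z)\preceq g(z)$ when $g(z)-f(z)$ has only non-negative coefficients, the inequality says $P_{i-1}(z)\preceq(1+z^{d_i})P_i(z)$; since multiplying by a power series with non-negative coefficients preserves $\preceq$, chaining gives the unconditional fact
$$(1+z)^n=P_0(z)\ \preceq\ \Big(\prod_{i=1}^m(1+z^{d_i})\Big)P_m(z)=\Big(\prod_{i=1}^m(1+z^{d_i})\Big)HS_I(z).$$

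For the forward half of the master equivalence, $D$-semi-regularity upgrades the inequality to $[P_{i-1}(z)]_D=[(1+z^{d_i})P_i(z)]_D$ for each $i$; dividing by the unit $1+z^{d_i}$ of $\bbR[[z]]$ via Lemma~\ref{trunc}(2) and inducting on $i$ from $P_0=T_0$ yields $[P_i(z)]_D=[T_i(z)]_D$, the case $i=m$ being the claim. For the converse I would use a ``squeeze'': assuming $[HS_I(z)]_D=[T_{{\bf d},n}(z)]_D$, and noting that $\prod_i(1+z^{d_i})$ involves only non-negative powers of $z$, the top of the displayed chain agrees with $\prod_i(1+z^{d_i})T_{{\bf d},n}(z)=(1+z)^n$ in every degree $<D$; since the bottom of the chain is also $(1+z)^n$, every intermediate term $\prod_{j\le i}(1+z^{d_j})P_i(z)$ must agree with $(1+z)^n$ in degrees $<D$, and dividing off the factors with Lemma~\ref{trunc} gives $[P_i(z)]_D=[T_i(z)]_D$ for all $i$. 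Combining this with the recursion $t_{{\bf d}^{(i-1)},n}(d)=t_{{\bf d}^{(i)},n}(d)+t_{{\bf d}^{(i)},n}(d-d_i)$ turns the earlier inequality into an equality for every $i$ and every $d<D$, i.e. into the required injectivities, i.e. into $D$-semi-regularity.

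Granting the master equivalence, the four parts should fall out. For (1), the first assertion is the forward half, and $HF_I(D)\ge t_{{\bf d},n}(D)$ comes from iterating $HF_{I_i}(D)\ge HF_{I_{i-1}}(D)-HF_{I_i}(D-d_i)$ over $i=1,\dots,m$, replacing $HF_{I_i}(D-d_i)$ by $t_{{\bf d}^{(i)},n}(D-d_i)$ (legitimate since $D-d_i<D$) and using the telescoped identity $\binom{n}{D}-t_{{\bf d},n}(D)=\sum_{i=1}^m t_{{\bf d}^{(i)},n}(D-d_i)$. For (2), failing $(D+1)$-semi-regularity forces the injectivity to break for some $i$ in degree exactly $D$ (it cannot break lower, by $D$-semi-regularity), so one inequality in that iteration is strict, giving $HF_I(D)>t_{{\bf d},n}(D)$. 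For (3), put $D_0=\ind(I)$; strong gradedness makes $HF_I(d)>0$ for $d<D_0$ and $HF_I(d)=0$ for $d\ge D_0$. If the sequence is semi-regular, then $t_{{\bf d},n}(d)=HF_I(d)>0$ for $d<D_0$ while parts (1)--(2) force $t_{{\bf d},n}(D_0)\le 0$, so $\ind T_{{\bf d},n}(z)=D_0$ and $HS_I(z)=[T_{{\bf d},n}(z)]_{D_0}=[T_{{\bf d},n}(z)]$; conversely, if $HS_I(z)=[T_{{\bf d},n}(z)]$, then $\ind T_{{\bf d},n}(z)$ is finite (otherwise the polynomial $HS_I(z)$ would equal $T_{{\bf d},n}(z)$, but every polynomial has finite index) and equals $\ind(I)$, so the master equivalence at $D=\ind(I)$ gives semi-regularity. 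Finally (4) is immediate: by the master equivalence $D$-semi-regularity depends only on $[HS_I(z)]_D$ and $[T_{{\bf d},n}(z)]_D$, and neither the ideal $I$ nor the multiset $\{d_1,\dots,d_m\}$ changes under a permutation.

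The step I expect to be the real obstacle is the converse half of the master equivalence --- propagating the Hilbert-series equality back down the filtration $I_0\subset I_1\subset\cdots\subset I_m$ to recover injectivity of each multiplication map. The device that makes it work is the unconditional inequality $(1+z)^n\preceq\prod_i(1+z^{d_i})HS_I(z)$ squeezed against its own bottom term; one must also keep careful track of truncations (systematic use of Lemma~\ref{trunc}) and handle the degenerate possibility that $\ind T_{{\bf d},n}(z)$ is a priori $\infty$.
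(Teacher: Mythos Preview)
Your proof is correct, and the overall scaffolding---the exact sequences, the recursion $t_{i-1}(d)=t_i(d)+t_i(d-d_i)$, the use of Lemma~\ref{trunc}---matches the paper's. But your organizing principle is genuinely different. The paper never states the master equivalence; it proves the forward half as part~(1), proves~(2) by a forward induction (find the first $u$ where injectivity fails at degree~$D$, then push the strict inequality $s_u(D)>t_u(D)$ up to $s_m(D)>t_m(D)$), and deduces the converse of~(3) contrapositively from~(2). Your route is the squeeze: the unconditional chain $(1+z)^n\preceq\prod_{j\le i}(1+z^{d_j})P_i(z)\preceq\prod_j(1+z^{d_j})P_m(z)$, collapsed in degrees $<D$ by the hypothesis $[P_m]_D=[T_m]_D$, gives $[P_i]_D=[T_i]_D$ for every~$i$, whence all kernels vanish. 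You then read off~(2) by telescoping $s_m(D)-t_m(D)=\sum_i k_i(D-d_i)$, which is neater than the paper's step-by-step push. A concrete payoff is part~(4): the paper's one-line justification ``follows immediately from~(3)'' is sloppy, since~(3) concerns semi-regularity rather than $D$-semi-regularity (one has to go back and combine~(1) and~(2) to make it honest), whereas your master equivalence gives~(4) instantly because both $[HS_I]_D$ and $[T_{{\bf d},n}]_D$ are permutation-invariant. One small remark: in your forward direction of~(3) you invoke ``parts~(1)--(2)'' to get $t_{{\bf d},n}(D_0)\le 0$, but only~(1) is needed there, since $HF_I(D_0)=0$.
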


\begin{proof} Set ${\bf d}_i = (d_1, \dots, d_i)$ and denote $t_{{\bf d}_i,n}(d)$ by $t_{i}(d)$. Note that 
$$
(1+z^{d_j}) \sum_{d=0}^\infty t_j(d)z^d = \sum_{d=0}^\infty t_{j-1}(d)z^d
$$
so $t_{j-1}(d)= t_j(d)+t_j(d-d_j)$ for all $j$ and $d$.

Set
$$
s_i(d) = HF_{( \lambda_1, \dots, \lambda_i )}(d) = \dim (B/(\lambda_1, \dots, \lambda_{i}))_{d}
$$
Let
$$
K_i = \ker\left(B/(\lambda_1, \dots, \lambda_{i}) \overset{\lambda_i}{\longrightarrow} B/(\lambda_1, \dots, \lambda_{i-1}) \right),
$$
let $K_{i,d}$ denote the subspace of degree $d$ elements of $K_i$ and let $k_i(d) = \dim K_{i,d}$. Note that $\lambda_1, \dots, \lambda_m$ is $D$-semi-regular if and only if $k_i(d-d_i)=0$ for all $d <D$ and all $i =1, \dots, m$. 

We have an exact sequence
$$
0\to K_i \to B/(\lambda_1, \dots, \lambda_{i}) \overset{\lambda_i}{\longrightarrow} B/(\lambda_1, \dots, \lambda_{i-1}) \to B/(\lambda_1, \dots, \lambda_{i})\to 0
$$
which breaks up into exact sequences at degree $d$
$$
0\to K_{i,d-d_i} \to B/(\lambda_1, \dots, \lambda_{i})_{d-d_i} \overset{\lambda_i}{\longrightarrow} B/(\lambda_1, \dots, \lambda_{i-1})_d \to B/(\lambda_1, \dots, \lambda_{i})_d\to 0
$$
Taking the dimension of each term yields
$$
k_i(d-d_i) - s_i(d-d_i) + s_{i-1}(d)- s_i(d) =0
$$

We now prove the assertions in part (1) by induction on $m$ using the case $m=0$ (the ''empty sequence'') as the base case. In this situation the assertions follow from the fact that $HS_B(z) = (1+z)^n$. Now let $m >0$. The hypothesis of $D$-semi-regularity implies that  $s_{m-1}(d)= s_m(d) + s_m(d-d_m)$ for $d=0, \dots, D-1$. The induction hypothesis implies that $s_{m-1}(d)=t_{m-1}(d)$ for $d <D$. So
\begin{align*}
[HS_{(\lambda_1, \dots, \lambda_{m-1})}(z)]_D &= \sum_{d=0}^{D-1} s_{m-1}(d)z^d \\
	&= \sum_{d=0}^{D-1} s_{m}(d)z^d + \sum_{d=0}^{D-1} s_{m}(d-d_m)z^d \\
	&= (1+z^{d_m})\sum_{d=0}^{D-1} s_{m}(d)z^d\\
	&= (1+z^{d_m}) \left[HS_{(\lambda_1, \dots, \lambda_{m})}(z)\right]_D
\end{align*}
Using Lemma \ref{trunc} and induction on $m$ yields
\begin{align*}
[ HS_{(\lambda_1, \dots, \lambda_{m})}(z)]_D &=  \left[ \frac{1}{(1+z^{d_m})} [HS_{(\lambda_1, \dots, \lambda_{m-1})}(z)]_D \right]_D\\
&= \left[ \frac{1}{(1+z^{d_m})} \left[\frac{(1+z)^n}{\prod_{j=1}^{m-1}(1+z^{d_j})}\right]_D \right]_D\\
	&= \left[ \frac{(1+z)^n}{\prod_{j=1}^{m}(1+z^{d_j})}\right]_D
\end{align*}
which proves the first assertion. For the second part we assume, by induction, that $s_{m-1}(D) \geq t_{m-1}(D)$ and observe that by semi-regularity and the first part, $s_m(D-d_m)=t_m(D-d_m)$. Hence
\begin{align*} 
s_m(D) &= s_{m-1}(D) - s_m(D-d_m) +k_m(D-d_m)\\
	&\geq t_{m-1}(D) -t_m(D-d_m) = t_m(D)
\end{align*}

(2) Suppose that  $\lambda_1, \dots, \lambda_m$ is $D$-semi-regular but not $(D+1)$-semi-regular. Let $u$ be the smallest integer such that  $\lambda_1, \dots, \lambda_u$ is not semi-regular. Then $k_u(D-d_u)>0$, so
\begin{align*} 
s_u(D) &= s_{u-1}(D) - s_u(D-d_u) +k_u(D-d_u)\\
	&> t_{u-1}(D) -t_u(D-d_u) = t_u(D)
\end{align*}
Now suppose that $s_j(D) > t_j(D)$ for some  $u\leq j <m$. Then
\begin{align*}
s_{j+1}(D) &= s_{j}(D) - s_{j+1}(D-d_{j+1}) +k_{j+1}(D-d_{j+1})\\
	&> t_{j}(D) -t_{j+1}(D-d_{j+1}) = t_{j+1}(D)
\end{align*}
So by induction, $s_m(D)>t_m(D)$.

(3) Suppose now that the sequence  $\lambda_1, \dots, \lambda_m$  is semi-regular, and set $D =\ind(I)$. Then $[HS_I(z)]_D = [T_{{\bf d},n}(z)]_D$ by part (1) because the sequence is $D$-semi-regular. Because $D =\ind(I)$, 
$ HS_I(z) = [HS_I(z)]_D$. By  (1), $t_m(d) = s_m(d) > 0$ for $d<D$ and $t_m(D) \leq s_m(D) =0$, so  $\ind(T_{{\bf d},n}(z))=D$. Thus 
$$[T_{{\bf d},n}(z)]= [T_{{\bf d},n}(z)]_D = [HS_I(z)]_D =HS_I(z)$$
as required.

Conversely, suppose that $HS_I(z)  = [T_{{\bf d},n}(z)]$ and let $D= \ind(T_{{\bf d},n}(z))$. Then by definition, $D$ is the degree of regularity of the sequence $\lambda_1, \dots, \lambda_m$. If the sequence $\lambda_1, \dots, \lambda_m$ is not $D$-semi-regular, then there exists a $k<D$ such that it is $k$-semi-regular and not $(k+1)$-semi-regular. By part (2) we would then have that 
$$
s_m(k)>  t_m(k)
$$
That is, the $k$-th coefficient of $HS_I(z)$ is strictly greater than the $k$-th coefficient of $T_{{\bf d},n}(z)$, contradicting the fact that $HS_I(z)  = [T_{{\bf d},n}(z)]$. Thus the sequence is $D$-semi-regular and hence semi-regular.

(4) follows immediately from (3) because the Hilbert series of $B/I$ is independent of the order of the $\lambda_i$.
\end{proof}

It is natural to expect that information about the semi-regular sequences should give us information about arbitrary sequences. Since semi-regular sequences have as few relations as possible, we expect the ideal generated by a semi-regular sequence $(\nu_1, \dots, \nu_m)$ to grow at least as quickly as the ideal generated by an arbitrary sequence $(\lambda_1, \dots, \lambda_m)$. That is (if we use the notation $\sum a_iz^i \leq \sum b_iz^i \Leftrightarrow a_i \leq b_i$ for all $i$), 
$$
HS_{(\lambda_1, \dots, \lambda_m)}(z) \geq HS_{(\nu_1, \dots, \nu_m)}(z)
$$
Thus it is tempting to expect for any sequence $\lambda_1, \dots, \lambda_m$ that
$$
HS_{(\lambda_1, \dots, \lambda_m)}(z) \geq  \left[T_{{\bf d},n}(z)\right].
$$

The following example shows that this is not true.

\begin{example} Consider the element 
$$\lambda= x_1 x_2+x_3x_4+x_5x_6 +x_7x_8+x_9x_{10} + x_{11}x_{12}
$$
in $B^{(12)}$ and let $I =(\lambda)$. Then, using \cite[Theorem 2.1]{dhkst} we can calculate that
$$
HS_I(z) = 1 + 12z + 65z^2+ 208z^3 +430z^4 +584z^5+494 z^6 +208 z^7 +65 z^8 +12 z^9 +z^{10}
$$
while 
$$
\left[ \frac{(1+z)^{12}}{1+z^2} \right] = 1 + 12z + 65z^2+ 208z^3 +430z^4 +584z^5+494 z^6 +208 z^7 + z^8 +12 z^9 + 65 z^{10}
$$
Note also that in this case  $\ind((\lambda)) =  \ind(T_{(12),n}(z))$ but $\lambda$ is not semi-regular. Thus the condition $\ind(I) = \ind(T_{{\bf d},n}(z))$ is not equivalent to semi-regularity. 
\end{example}

It would be interesting to know whether $\ind((\lambda_1, \dots, \lambda_m)) \geq  \ind(T_{{\bf d},n}(z))$ for an arbitrary sequence $\lambda_1, \dots, \lambda_m$. All known evidence points to this result being true. However, the failure of the inequality $HS_I(z)\geq \left[T_{{\bf d},n}(z) \right]$ rules out the obvious way of proving it.
\numberwithin{thm}{section}

\section{Conjectures and Questions on Semi-Regularity}

It has long been conjectured that semi-regular sequences are in some sense ``generic''. However very little progress has been made towards proving this conjecture. In fact even the question of the existence of semi-regular quadratic sequences of length $n$ in $n$ of variables remains open. Let us begin by reviewing some of the conjectures made by Bardet et al.

\begin{Conjecture}\cite{b,bfsy} The proportion of semi-regular sequences tends to one as the number of variables tends to infinity.
\end{Conjecture}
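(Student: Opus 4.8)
The plan is to recast the conjecture as an estimate for the number of $\mathbb{F}_2$-rational points of an algebraic variety and then to bound that count. Fix a length $m=m(n)$ and a degree vector $\mathbf{d}=\mathbf{d}(n)=(d_1,\dots,d_m)$ — for the motivating case one takes $m=n$ and all $d_i=2$. A sequence $\lambda_1,\dots,\lambda_m$ of homogeneous elements of $\Bn$ of these degrees is a point of the affine space $\mathbb{A}^N_{\mathbb{F}_2}$ with $N=\sum_{i=1}^m \dim \Bn_{d_i}=\sum_{i=1}^m \binom{n}{d_i}$, and ``the proportion of semi-regular sequences'' is $|U_n(\mathbb{F}_2)|/2^N$, where $U_n\subseteq \mathbb{A}^N$ is the locus of semi-regular sequences and $Z_n=\mathbb{A}^N\setminus U_n$ its complement; the goal is $|Z_n(\mathbb{F}_2)|/2^N\to 0$ as $n\to\infty$.

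First I would show that $Z_n$ is Zariski closed and write it down. By the exact-sequence reformulation of $D$-semi-regularity preceding Theorem \ref{bfsthm}, together with part (3) of that theorem, semi-regularity of $\lambda_1,\dots,\lambda_m$ is equivalent to a finite collection of rank conditions: for each $i$ and each $d$ up to the fixed integer $D_0:=\ind(T_{\mathbf{d},n}(z))$, the multiplication map
\[
(B/(\lambda_1,\dots,\lambda_{i}))_{d-d_i}\overset{\lambda_i}{\longrightarrow}(B/(\lambda_1,\dots,\lambda_{i-1}))_{d}
\]
must have maximal possible rank (the minimum of the two dimensions; at the top degree $D_0$ this amounts to the images of the $\lambda_i$ spanning $B_{D_0}$, which by strong gradedness forces $HF_I(d)=0$ for all $d\geq D_0$). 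After choosing bases, each such map is a matrix whose entries are polynomials in the coordinates of $\mathbb{A}^N$, so ``rank below the expected value'' is the vanishing of certain minors, and $Z_n$ is a union, over the finitely many relevant pairs $(i,d)$, of determinantal loci cut out by explicit equations of controlled degree.

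Next one needs two ingredients: (a) $Z_n\subsetneq \mathbb{A}^N$, i.e.\ \emph{some} sequence of degree $\mathbf{d}$ in $\Bn$ is semi-regular — equivalently the generic sequence over $\overline{\mathbb{F}_2}$ is semi-regular, the Boolean analogue of Fröberg's conjecture; and (b) a quantitative bound forcing $|Z_n(\mathbb{F}_2)|/2^N\to 0$. The obvious route to (b) is to bound the $\mathbb{F}_2$-points of $Z_n$ by its geometry, e.g.\ via an estimate of the shape $|Z_n(\mathbb{F}_2)|\leq \deg(Z_n)\cdot 2^{\dim Z_n}$, so that the proportion is at most $\deg(Z_n)\cdot 2^{-c_n}$ with $c_n=\operatorname{codim} Z_n$; this would suffice if $c_n-\log_2\deg(Z_n)\to\infty$, and one would try to obtain a growing codimension inductively on $m$ from the exact sequences above, each failing map dropping rank on a locus of codimension at least one more than the absolute difference of its source and target dimensions.

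The main obstacle is (a) and (b) together, and each is hard for a different reason. Ingredient (a) is precisely the open problem at the heart of the subject — for quadratic sequences of length $n$ in $n$ variables it is the case singled out as unsolved in the introduction — and the present paper establishes semi-regularity only for the single symmetric element $\sigma_{n,d}$, and only when $d$ is a power of two and $n=3d$. As for (b), the crude geometric bound fails in exactly the range that matters: near the degree of regularity the relevant multiplication maps are (nearly) square, so the corresponding components of $Z_n$ can have codimension $O(1)$ — indeed be hypersurfaces cut out by a determinant of large degree — and then Schwartz--Zippel gives nothing over $\mathbb{F}_2$, where a nonzero polynomial of degree $\geq 2$ may vanish on half of $\mathbb{F}_2^N$. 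What would actually be needed is a direct argument that a ``random'' such square matrix over $\mathbb{F}_2$ is rarely singular, which is essentially the statement to be proved. A probabilistic reformulation meets the same wall: put the uniform measure on sequences, track $s_i(d)=\dim(B/(\lambda_1,\dots,\lambda_i))_d$, and try to show concentration around the values $t_{\mathbf{d},n}(d)$ of Theorem \ref{bfsthm}; but the $s_i(d)$ are far from sums of independent contributions, and no concentration inequality, nor any structural description of the kernels $K_{i,d}$, strong enough to close the argument is presently available — which is why the conjecture remains open.
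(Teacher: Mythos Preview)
Your proposal does not prove the statement in the interpretation the paper actually establishes, and in fact does not prove anything: you correctly identify serious obstacles to the fixed-degree version and then stop. But the paper's own proof of Conjecture~1 takes a completely different, and far more elementary, route which you have overlooked.

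The paper interprets ``proportion of semi-regular sequences'' in the broadest possible sense: over \emph{all} homogeneous subsets of $\Bn$ (all cardinalities, all degree vectors at once), what fraction are semi-regular? With this reading the answer is trivial for a trivial reason. Any subset that contains a spanning set of $\Bn_d$ has index $d$ and is vacuously $d$-semi-regular; adjoining further homogeneous elements of degree $\geq d$ does not change the index, so the enlarged subset remains semi-regular. One then only needs that a uniformly random subset of a growing $\bbF_2$-vector space spans with probability tending to $1$, which is a one-line union bound over hyperplanes. This is the entire content of Theorem~\ref{srgen}, and the paper is explicit that the argument shows only that the proportion of \emph{trivially} semi-regular sequences tends to one --- it yields no information about the regime you target.

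By contrast, you aim directly at sequences of fixed length $m(n)$ and fixed degree vector $\mathbf{d}(n)$ (e.g.\ $m=n$, all $d_i=2$). That is not Conjecture~1 as the paper proves it; it is essentially Conjecture~3 (and the refined Conjecture~4), which the paper leaves open. Your diagnosis of the difficulties there --- existence of a single semi-regular sequence of the given type is already unknown, and Schwartz--Zippel/Lang--Weil bounds are useless over $\bbF_2$ when the bad locus has bounded codimension --- is accurate, but it is an explanation of why that harder problem is hard, not a proof of the statement at hand. To match the paper, you should instead supply the elementary spanning-set argument above.
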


We prove this conjecture in Section \ref{conj1} in the following precise sense. Recall that the semi-regularity of a sequence depends only on the set of elements, not on their order, so we actually consider the proportion of homogeneous subsets of $\Bn$  that are semi-regular. Let $h(n)$ be the number of subsets of $\Bn$ consisting of homogeneous elements of degree greater than or equal to one. Let $s(n)$ be the number of such subsets  that are semi-regular. We show in Theorem \ref{srgen} that 
$$
\lim_{n \to \infty} \frac{s(n)}{h(n)} = 1
$$
Unfortunately this result does not give us the kind of information that we are interested in. As the size of the set increases, so does the likelihood of it being semi-regular for trivial reasons (for instance any basis of the set of quadratic polynomials is trivially semi-regular). We show that the proportion of sequences that are trivially semi-regular tends to one.

A different formulation of the conjecture that most sequences are semi-regular is given in \cite[Conjecture 2]{bfs2}

\begin{Conjecture} \cite{bfs2} For any $(n,m,d_1, \dots, d_m)$ the proportion $\pi(n,m,d_1, \dots, d_m)$ of semi-regular sequences over $\bbF_2$ in the set $E(n,m,d_1, \dots, d_m)$ of algebraic systems of $m$ equations of degrees $d_1, \dots, d_m$ in $n$ variables tends to $1$ as $n$ tends to $\infty$.
\end{Conjecture}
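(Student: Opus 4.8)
The statement as literally quoted is, I believe, false, and the first thing I would do is say so and then isolate the genuine content. As written it fixes the tuple $(n,m,d_1,\dots,d_m)$ while letting ``$n$ tend to $\infty$'', which only makes sense read as: for every sequence of such tuples the proportion tends to $1$. Under the most naive such reading, where $m$ and the degrees $d_1,\dots,d_m$ are held fixed, it contradicts the main non-existence result announced in the introduction --- for every fixed $\bfd=(d_1,\dots,d_m)$ there is an $N$ with no semi-regular sequence of degree $\bfd$ in $\Bn$ once $n>N$ --- so $\pi(n,m,d_1,\dots,d_m)=0$ for all large $n$ and the limit is $0$. The cleanest instance is $m=1$: by \cite[Lemma 3.12]{vk}, equivalently by the bound $n\le 3d$ specialized to $d=2$, there is no semi-regular quadratic element of $\Bn$ once $n\ge 7$, so $\pi(n,1,2)=0$ for $n\ge 7$. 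Thus for this item the ``proof'' is a one-line refutation; the real question is what the correct statement should be.

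The version worth proving is the one in which the number of equations and/or the degrees grow with $n$ --- in particular the cryptanalytically relevant family $m=n$, $d_1=\dots=d_n=2$, or more generally a family $(\bfd^{(n)})_n$ for which the degree of regularity $\ind\big(T_{\bfd^{(n)},n}(z)\big)$ grows with $n$ rather than collapsing to a trivial value. For such a family the plan is to show that a uniformly random sequence in $E(n,m,\bfd^{(n)})$ is semi-regular with probability tending to $1$. By Theorem~\ref{bfsthm}(3) this says exactly that its ideal $I$ has Hilbert series $\big[T_{\bfd^{(n)},n}(z)\big]$, and by the exact-sequence reformulation preceding that theorem, semi-regularity fails precisely when for some $i$ and some $d<\ind(I)$ the multiplication map $\lambda_i\colon\big(B/(\lambda_1,\dots,\lambda_i)\big)_{d-d_i}\to\big(B/(\lambda_1,\dots,\lambda_{i-1})\big)_{d}$ has nonzero kernel. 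I would then run a union bound: over $i=1,\dots,m$, over the finitely many relevant degrees $d<\ind(I)$, and over a spanning set of candidate multipliers $\mu$, estimate the proportion of choices of $\lambda_i$ for which $\mu\lambda_i\in(\lambda_1,\dots,\lambda_{i-1})$ but $\mu\notin(\lambda_1,\dots,\lambda_i)$, conditioned on $\lambda_1,\dots,\lambda_{i-1}$ already having the expected Hilbert function, and show that the resulting count of ``bad'' sequences is $o\big(|E(n,m,\bfd^{(n)})|\big)$.

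Two things must be in place before this can even start, and that is where I expect the obstacle to sit. First, \emph{existence}: the probabilistic argument is vacuous unless $E(n,m,\bfd^{(n)})$ actually contains a semi-regular sequence, and for the headline case $m=n$, $d_i=2$ this is precisely the open problem flagged in the introduction. Second, an \emph{effective} bound on the non-semi-regular locus: the counting behind the Conjecture~1 result (Theorem~\ref{srgen}) succeeds only because it places no control on $m$ and freely counts sets that are semi-regular for trivial reasons, such as a basis of $\Bn_2$; here $m$ is tied to $n$ and the degrees are small, so the ``bad'' sequences are exactly the generic-looking obstructions and no good upper bound on their number is known. Concretely the step ``few $\lambda_i$ satisfy $\mu\lambda_i\in(\lambda_1,\dots,\lambda_{i-1})$'' requires controlling $\dim\big(B/(\lambda_1,\dots,\lambda_{i-1})\big)_{d}$ for \emph{typical} earlier choices, uniformly in $n$ --- essentially the statement one is inducting on --- so the real work is to make this bootstrap quantitative with error terms that survive summation over $i$ and $d$. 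Absent a genuinely new idea about random ideals in $\Bn$, I would expect only partial results to be reachable: small or slowly growing $m$, or degrees large relative to $n$.
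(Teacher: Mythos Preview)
Your core answer is correct and matches the paper exactly: the conjecture is false, and the paper refutes it precisely via the non-existence theorem (Theorem~\ref{nonexistance}), concluding that for fixed $(m,d_1,\dots,d_m)$ with some $d_j\ge 2$ one has $\lim_{n\to\infty}\pi(n,m,d_1,\dots,d_m)=0$; your $m=1$, $d=2$ example is also the paper's Theorem~\ref{m1d2}. Your second half, sketching a probabilistic union-bound attack on a corrected conjecture with $m$ growing with $n$, goes well beyond what the paper does here---the paper only records the corrected versions as Conjectures~3 and~4 and explicitly leaves them open---so that part is speculative commentary rather than something to compare against a proof in the paper.
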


We show that this conjecture is false. In fact, the opposite is true. For a fixed choice of $(m, d_1, \dots, d_m)$, Theorem \ref{nonexistance} states that
$$
\lim_{n\to \infty} \pi(n,m,d_1, \dots, d_m) =0.
$$

Neither of these conjectures accurately addresses the observed fact that ``most'' quadratic sequences of length $n$ in $n$ variables are semi-regular. More generally we make the following conjecture.

\begin{Conjecture} For any $1\leq d\leq n$ define $\pi(n,d)$ to be the proportion of sequences of degree $d$ and length $n$ in $n$ variables that are semi-regular. Then 
$$
\lim_{n\to \infty} \pi(n,d) =1.
$$
\end{Conjecture}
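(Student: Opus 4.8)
The plan is to recast semi-regularity as a chain of local genericity conditions on multiplication maps, using Theorem~\ref{bfsthm}, and then to establish the conjecture by a probabilistic induction on the length of the sequence. Fix $d\ge 2$ (the case $d=1$ is discussed at the end), and write ${\bf d}=(d,\dots,d)\in\bbN^{n}$, $A_{i}=\Bn/(\lambda_1,\dots,\lambda_{i})$, and $D=\ind T_{{\bf d},n}(z)$. Unwinding the inductive argument in the proof of Theorem~\ref{bfsthm}, one finds that a length-$n$ sequence $\lambda_1,\dots,\lambda_n$ of forms of degree $d$ in $\Bn$ is semi-regular as soon as, for every $i$, the image of $\lambda_i$ in $A_{i-1}$ has no unexpected syzygy in low degrees, i.e.
$$
\Ann_{A_{i-1}}(\lambda_i)_e=\lambda_i\cdot(A_{i-1})_{e-d}\qquad\text{for all }e\text{ with }e\le D-d
$$
(equivalently, the periodicity complex $A_{i-1}\xrightarrow{\,\cdot\lambda_i\,}A_{i-1}\xrightarrow{\,\cdot\lambda_i\,}A_{i-1}$ is exact in the middle in each such degree): indeed the earlier stages force $A_{i-1}$ to have the generic Hilbert function up to degree $D$ by Theorem~\ref{bfsthm}, and the displayed conditions for all $i$ then force semi-regularity via the Hilbert-series characterization of Theorem~\ref{bfsthm}. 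So it suffices to show that a uniformly random $(\lambda_1,\dots,\lambda_n)\in\bigl(\Bn_d\bigr)^{n}$ satisfies, with probability tending to $1$, all $O(n^{2})$ of these conditions.

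The induction would process the forms one at a time. Conditioning on the event that $\lambda_1,\dots,\lambda_{i-1}$ satisfy all of their conditions --- so that $A_{i-1}$ is a graded $\gft$-algebra with a prescribed, generic, Hilbert function --- set
$$
\varepsilon_i=\Pr\bigl[\,\Ann_{A_{i-1}}(\lambda_i)_e\supsetneq\lambda_i\cdot(A_{i-1})_{e-d}\ \text{ for some }e\le D-d\,\bigm|\,A_{i-1}\text{ as above}\,\bigr].
$$
Since the natural map $\Bn_d\to(A_{i-1})_d$ has fibres of equal size, $\lambda_i$ induces a uniform element of $(A_{i-1})_d$, and a union bound over the at most $D\le n+1$ relevant degrees reduces everything to one estimate: for a graded quotient $A$ of $\Bn$ with the relevant generic Hilbert function and a uniform $\ell\in A_d$,
$$
\Pr_{\ell}\bigl[\,\Ann_{A}(\ell)_e\supsetneq\ell\cdot A_{e-d}\,\bigr]=o(1/n^{2}),
$$
uniformly over the algebras $A$ that can occur and over $e<D$. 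Granting this, $\sum_{i=1}^{n}\varepsilon_i\to 0$ and the conjecture follows. The reduction and the union bound are routine; the entire content sits in this last estimate.

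I expect this estimate to be the main obstacle, and it has two essentially independent layers. The first is an \emph{existence/genericity} layer: at each stage one needs a generic degree-$d$ form over $A$ to realize the Fr\"oberg-type Hilbert-series prediction. We cannot even begin from a single witness in the usual way, since the existence of semi-regular sequences of length $n$ is itself unknown (Section~1); moreover the characteristic-zero Lefschetz-type results that handle one extra generic form (for instance Stanley's theorem over $\mathbb{C}$) do not obviously chain $n$ times, nor do they transfer cleanly to $\gft$, where $\ell^{2}=0$ so ``generic form'' cannot mean ``power of a linear form''. The second, and genuinely new, layer is a \emph{density} layer peculiar to $\gft$: over an infinite field the locus of forms with an unexpected syzygy is cut out by the vanishing of suitable minors, hence is a proper Zariski-closed subset once a witness exists, and Noether position or Schwartz--Zippel then upgrades this to genericity --- but over $\gft$ a proper closed subset can contain three quarters of $\gft^{N}$, or more, so one genuinely needs an arithmetic argument that the bad locus has density $o(1/n^{2})$. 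The natural route is to realize multiplication by $\ell$ as a structured random matrix over $\gft$ whose entries are $\gft$-linear in the coordinates of $\ell$, with the combinatorial pattern of multiplication in $\Bn$, and then to prove a rank-concentration bound for it; the dimension gaps in the low degrees one must control grow exponentially in $n$, which makes it plausible that the failure probability is in fact doubly-exponentially small, but the strong dependence among the entries of a multiplication matrix, together with the need for uniformity over all $A_{i-1}$ that arise, is the real sticking point.

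A sensible first target is $d=2$, where quadratic forms over $\gft$ are classified by rank and Arf invariant and the intermediate algebras $A_{i-1}$ may be amenable to more explicit analysis; even the weaker conclusions $\liminf_n\pi(n,2)>0$, or merely the existence of a single semi-regular quadratic sequence of length $n$ for every $n$, would already be substantial progress. Finally, the hypothesis must really be $d\ge 2$: a sequence of $n$ linear forms in $\Bn$ is semi-regular precisely when the forms are linearly independent, so $\pi(n,1)=\prod_{j=1}^{n}(1-2^{-j})\to\prod_{j\ge 1}(1-2^{-j})\approx 0.2888\ne 1$, and the conjecture as stated should be read with $d\ge 2$.
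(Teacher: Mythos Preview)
This statement is a \emph{conjecture} in the paper, not a theorem: the authors explicitly present it as open and remark that ``the existence question still remains largely open.'' There is therefore no proof in the paper to compare your proposal against, and the right reading of your write-up is as a research programme rather than a proof.

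As a programme it is coherent and identifies the real obstacles honestly. The reduction via Theorem~\ref{bfsthm} to step-by-step control of the multiplication maps $A_{i-1}\xrightarrow{\cdot\lambda_i}A_{i-1}$ is sound, and your separation of the difficulty into an \emph{existence} layer (does a generic form realize the Fr\"oberg prediction at each stage?) and a \emph{density} layer (over $\gft$, Zariski-closedness gives no quantitative bound) is exactly right. But both layers are genuinely open: the existence layer already contains the unsolved question of whether a single semi-regular degree-$d$ sequence of length $n$ exists for all $n$, and the density layer asks for rank concentration of structured random $\gft$-matrices uniformly over an uncontrolled family of intermediate algebras $A_{i-1}$. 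Nothing in the paper, or in the literature it cites, supplies either ingredient. So the proposal is not incorrect, but it is not a proof: the step ``Granting this, $\sum\varepsilon_i\to 0$'' grants precisely the content of the conjecture.

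Your closing remark about $d=1$ is a genuine correction to the statement as printed: a length-$n$ sequence of linear forms is semi-regular exactly when it is a basis of $\Bn_1$, so $\pi(n,1)=\prod_{k=1}^{n}(1-2^{-k})\to\prod_{k\ge 1}(1-2^{-k})\approx 0.2888$, not $1$. The paper's own Conjecture~\ref{newconj} begins ``Fix $d>1$'', and Conjecture~3 should be read the same way.
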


In fact we expect much more to be true. Define $\pi(n,m,d)$ to be the proportion of sequences of degree $d$ and length $m$ in $n$ variables that are semi-regular. The table below gives estimates of $\pi(n,m,2)$ for samples of 20 randomly chosen sequences of quadratic homogeneous polynomials.

\begin{table}[h] 
$$
\begin{array}{c|c|c|c|c|c|c|c|c|c|c|c|c|c|c|}
n\backslash m&2&3&4&5&6&7&8&9&10&11&12&13&14&15\\
\hline
3&1&.8&1&1&1&1&&&&&&&&\\
4&.35&1&.75&.75&.3&.65&.85&.9&1&1&1&1&1&1\\
5&0&.85&.95&1&.9&.85&.75&.6&.2&.65&.7&.9&.9&1\\
6&.85&.7&.65&.9&1&1&1&.95&.95&.95&.75&.8&.5&.25\\
7&0&.85&1&.1&1&1&1&1&1&1&1&.95&1&1\\
8&.7&.45&1&1&.95&.1&1&1&1&1&1&1&1&1\\
9&0&.95&.7&1&1&1&1&.8&.9&1&1&1&1&1\\
10&0&.85&1&.35&1&1&1&1&1&1&.25&1&1&1\\
11&0&.95&1&1&1&1&1&1&1&1&1&1&1&.4\\
12&0&0&1&1&1&1&.9&1&1&1&1&1&1&1\\
13&0&0&1&1&1&1&1&1&1&1&1&1&1&1\\
14&0&0&0&1&1&1&1&1&1&1&1&1&1&1\\
15&0&0&0&1&1&1&1&1&1&1&1&1&.45&1\\
\end{array}
$$
\caption{Proportion of Samples of 20 Sets of $m$ Homogeneous Quadratic Elements in $n$ variables that are Semi-Regular}
\end{table}

Theorem \ref{nonexistance} states that all columns of Table 1 eventually become zero. We conjecture that the non-zero entries of the rows tend to one as $n \to \infty$. One formulation of this is the following conjecture. 

\begin{Conjecture} \label{newconj} Fix $d>1$. Define $\pi(n,m, d)$ to be the proportion of sequences of degree $d$ and length $m$ in $n$ variables that are semi-regular. Then there exists an $0<\eta_d \leq 1/3$ such that  for all $\epsilon >0$, there exists an $N>0$  such that
$$
 \pi(n,m,d) >1-\epsilon \textrm{ for all } n > N \textrm{ and all } m>\eta N.
$$
\end{Conjecture}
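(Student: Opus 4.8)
To attack Conjecture \ref{newconj}, the plan is to reduce it to a probabilistic statement about Hilbert series and then run a ``construct, then perturb'' argument. By Theorem \ref{bfsthm}(3), a sequence $\lambda_1,\dots,\lambda_m$ of elements of degree $d$ is semi-regular exactly when $HS_{(\lambda_1,\dots,\lambda_m)}(z)=\left[T_{{\bf d},n}(z)\right]$ with ${\bf d}=(d,\dots,d)$, so it suffices to show that, in the regime where $m$ grows at least linearly, say $m>\eta_d n$, the proportion of length-$m$ degree-$d$ sequences having this precise Hilbert series tends to $1$ as $n\to\infty$. Unwinding the exact sequences used in the proof of Theorem \ref{bfsthm}, this is the assertion that each multiplication map
$$
\lambda_i\colon\ \bigl(\Bn/(\lambda_1,\dots,\lambda_i)\bigr)_{k-d}\ \longrightarrow\ \bigl(\Bn/(\lambda_1,\dots,\lambda_{i-1})\bigr)_{k}
$$
is injective for every $i\le m$ and every $k<D$, where $D=\ind\bigl(T_{{\bf d},n}(z)\bigr)$. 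The first step is to locate $D$: estimating the coefficients of $(1+z)^n(1+z^d)^{-m}$ shows that for $m=\Theta(n)$ the index $D$ grows only polynomially in $n$ (of order roughly $n^{1-1/d}$), so there are only polynomially many injectivity conditions to check.

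The second step is a base case: for all large $n$ (or at least for a set of $n$ of positive density, from which the remaining $n$ can then be filled in) one must exhibit a single semi-regular sequence of degree $d$ and length about $\eta_d n$. The natural construction partitions the variables into blocks and places on each block an explicit semi-regular element of degree $d$ --- the symmetric polynomials $\sigma_{3d,d}$ when $d$ is a power of two, by the sharpness result, giving block density $1/3$, and whatever analogous explicit elements are available in general. Since $\Bn/(\nu_1,\dots,\nu_m)$ then factors as a tensor product of the block quotients, its Hilbert series is the product of the block Hilbert series, and one checks with Lemma \ref{trunc} that this product agrees with $\left[T_{{\bf d},n}(z)\right]$ up to degree $D$; making the index of the product series line up with $D$ is the delicate point here. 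The best block density one can arrange in this way is the sought constant $\eta_d$, and the inequality $\eta_d\le 1/3$ should come out of the single-element bound $n\le 3d$ applied to the last block, using the order-independence in Theorem \ref{bfsthm}(4).

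The third step is the passage from ``a semi-regular sequence exists'' to ``almost every sequence is semi-regular'', and this is where over $\bbF_2$ the usual tools fail: the non-semi-regular sequences lie in a proper subvariety, but Schwartz--Zippel gives nothing since a nonzero polynomial can vanish on half of $\bbF_2^N$, and the failure of the inequality $HS_I(z)\ge\left[T_{{\bf d},n}(z)\right]$ noted after Theorem \ref{bfsthm} rules out the other obvious shortcut. I would instead build the sequence one generator at a time. Adjoining $\lambda_{i+1}$ leaves the maps attached to $\lambda_1,\dots,\lambda_i$ untouched, and since each prefix only has to be injective up to the index $D$ of the full sequence --- no larger, by the monotonicity of $\ind$ in the number of generators --- the inductive step becomes a single-element problem inside the quotient $R=\Bn/(\lambda_1,\dots,\lambda_i)$, whose Hilbert function below degree $D$ is already forced (once the prefix is good) to equal that of $(1+z)^n(1+z^d)^{-i}$: a uniformly random degree-$d$ element of $R$ should be semi-regular up to degree $D$ except with probability exponentially small in a fixed power of $n$. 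A union bound over the polynomially many indices then closes the argument, and the constant $\eta_d\le 1/3$ should emerge by tracking where the analogue of the bound $n\le 3d$ becomes binding for these successive quotients.

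The crux, which I expect to be the main obstacle, is that last quantitative non-degeneracy claim for a random element of $R$. The relevant multiplication matrices have a rank defect between source and target that grows with $n$ when $m>\eta_d n$, and one wants them to behave, for the purpose of rank, like random matrices of the same shape --- even though their entries are far from independent, being forced by the multiplication action of $R$ and by the previously chosen $\lambda_j$. Establishing such an anti-concentration statement over $\bbF_2$ seems to require identifying a robust combinatorial certificate of semi-regularity, a substantial strengthening of the ``any basis of the quadrics is trivially semi-regular'' phenomenon that underlies Theorem \ref{srgen}, which a random sequence of length $m>\eta_d n$ satisfies with probability tending to $1$; this, together with the base-case construction, would pin down $\eta_d$ and complete the proof.
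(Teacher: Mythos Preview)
The statement you are trying to prove is a \emph{Conjecture} in the paper, not a theorem; the paper offers no proof of it, and in the Conclusion the authors even raise the possibility that it may be false (pointing to sporadic pairs $(n,m)$ where the coefficient of $(1+z)^n/(1+z^2)^m$ vanishes at the index). So there is no ``paper's own proof'' to compare against.

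Your proposal is a research outline, not a proof, and you identify the real obstacle yourself: the anti-concentration step over $\bbF_2$ in your third stage is genuinely open, and nothing in the paper or in the cited literature supplies it. A few further specific gaps are worth flagging. First, your estimate $D\sim n^{1-1/d}$ for the index when $m=\Theta(n)$ is off; the asymptotic analysis of Bardet--Faug\`ere--Salvy--Yang (reference \cite{bfsy} in the paper) gives $D$ linear in $n$ in this regime, so the number of injectivity conditions is not as small as you hope (still polynomially many, but this weakens your union bound). Second, the base-case construction via disjoint blocks is incomplete: the element $\sigma_{d,3d}$ is semi-regular only when $d=2^t$ (Theorem \ref{gamsem2}), so for general $d$ you have no candidate, and even when blocks exist it is not clear that $\prod_i\bigl[(1+z)^{3d}/(1+z^d)\bigr]$ truncates to $\bigl[(1+z)^{3dm}/(1+z^d)^m\bigr]$, since truncation does not commute with products (Lemma \ref{trunc} only handles truncation at a fixed common level). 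Third, your claim that $\eta_d\le 1/3$ ``should come out of'' Theorem \ref{thmbound} is not justified: that theorem bounds $n$ for a \emph{single} element, and the passage from $m=1$ to general $m$ via Theorem \ref{bfsthm}(4) does not obviously yield a density constraint of the form $m>\eta_d n$.

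In short: this is an open problem, your sketch is a plausible line of attack, but none of the three steps is close to being executed, and the third is essentially the whole conjecture restated probabilistically.
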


While we believe these conjectures to be true, it should be noted that the existence question still remains largely open. 

\begin{question}For which pairs $(n, {\bf d}=(d_1, \dots, d_m))$  do there exist semi-regular sequences $\lambda_1, \dots, \lambda_m$ with $\deg \lambda_i = d_i$. Equivalently,  for which $n$ and ${\bf d} $ is 
$$
T_{{\bf d},n}(z)  =\frac{(1+z)^n}{\prod_{i=1}^m(1+z^{d_i})}
$$ 
the Hilbert series of an appropriate graded homomorphic image of the algebra $\grft$? 
\end{question}

At both ends of the degree spectrum, the existence question is trivial. Sequences of linear elements are semi-regular if and only if they are linearly independent. Likewise for sequences of degree $n-1$ (and $n$). Also for sufficiently large $m$ it is easy to find sequences that are trivially semi-regular; for instance a basis of the space of polynomials of degree $d$.

\section{The case $m=1$: semi-regularity of homogeneous polynomials}

In this section we give a complete answer to Question 1 in the case when the polynomial is linear or quadratic. We show that the proportion of semi-regular elements of degree $d$ in $B^{(n)}$ is zero when $n >3d$. Also, we give a complete description of the Hilbert Series and the index of a semi-regular element.

\begin{prop}
  Let $\lambda \in B^{(n)}_1$ then $\lambda$ is semi-regular and 
  $\ind(\lambda)=n$.
\end{prop}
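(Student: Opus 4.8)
The plan is to reduce to the case $\lambda = x_1$ and then simply read off the Hilbert series and the index. One must first assume $\lambda \neq 0$ (for $\lambda = 0$ the statement is false, so this case is implicitly excluded), and then observe that any nonzero linear form can be carried to $x_1$ by an invertible $\mathbb{F}_2$-linear change of the variables $X_1, \dots, X_n$. Such a substitution descends to a graded automorphism of $B^{(n)}$: since the coefficients lie in $\mathbb{F}_2$ and we work in characteristic $2$, $\left(\sum_j a_{ij} X_j\right)^2 = \sum_j a_{ij}^2 X_j^2 = \sum_j a_{ij} X_j^2$, so an invertible linear substitution maps the ideal $(X_1^2, \dots, X_n^2)$ onto itself. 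Because both semi-regularity (via Theorem \ref{bfsthm}(3)) and $\ind$ depend only on the graded ring $B^{(n)}/(\lambda)$ up to graded isomorphism, it suffices to treat $\lambda = x_1$.

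For $\lambda = x_1$ we have $B^{(n)}/(x_1) \cong \mathbb{F}_2[X_2, \dots, X_n]/(X_2^2, \dots, X_n^2) \cong B^{(n-1)}$, hence $HS_{(\lambda)}(z) = (1+z)^{n-1}$. On the other hand $T_{(1),n}(z) = (1+z)^n/(1+z) = (1+z)^{n-1}$ is already a polynomial whose coefficients $\binom{n-1}{j}$ are positive for $0 \le j \le n-1$ and vanish at $j = n$; thus $\ind(T_{(1),n}(z)) = n$ and $[T_{(1),n}(z)] = (1+z)^{n-1} = HS_{(\lambda)}(z)$. By Theorem \ref{bfsthm}(3), $\lambda$ is semi-regular. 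Finally, $\ind(\lambda) = \ind(B^{(n)}/(\lambda)) = \ind(B^{(n-1)}) = n$, since $B^{(n-1)}_{n-1}$ is spanned by $x_2 \cdots x_n \neq 0$ while $B^{(n-1)}_k = 0$ for $k \ge n$.

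I do not expect any serious obstacle; the two points needing care are excluding $\lambda = 0$ and checking that the linear substitution really induces an algebra automorphism of the truncated ring, which is exactly the characteristic-$2$ identity above. If one wishes to bypass Theorem \ref{bfsthm}, semi-regularity of $x_1$ can also be verified by hand: writing a homogeneous $\mu$ as $\mu = x_1 \mu_1 + \mu_0$ with $\mu_0$ a combination of squarefree monomials in $x_2, \dots, x_n$, the relation $\mu x_1 = 0$ reduces to $x_1 \mu_0 = 0$, which forces $\mu_0 = 0$ because the monomials $x_1 m$ with $m$ squarefree in $x_2, \dots, x_n$ are linearly independent in $B^{(n)}$; hence $\mu \in (x_1)$, in every degree.
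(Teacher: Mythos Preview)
Your argument is correct and follows essentially the same route as the paper's proof: reduce to $\lambda = x_1$, identify $B^{(n)}/(x_1) \cong B^{(n-1)}$ with Hilbert series $(1+z)^{n-1} = T_{(1),n}(z)$, and invoke Theorem~\ref{bfsthm}(3). Your version is in fact more careful than the paper's, since you explicitly justify the ``without loss of generality'' step via the characteristic-$2$ identity showing the linear substitution preserves $(X_1^2,\dots,X_n^2)$, and you note the tacit exclusion of $\lambda = 0$.
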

\begin{proof}
  Without loss of generality we can assume that $$\lambda=x_1 \in B^{(n)}.$$In this case $B^{(n)}/(x_1) \cong B^{(n-1)}$ which has Hilbert series $(1+z)^{n-1}$. On the other hand $T_{(1),n}(z)=(1+z)^n/(1+z)=(1+z)^{n-1}$. So by Theorem \ref{bfsthm}, this element is semi-regular and $\ind (\lambda)= n$.
\end{proof}

\begin{thm}\label{trivialsem}
Let $\lambda \in B^{(n)}$ be homogeneous of degree $d$. If $d=n$ or $d=n-1$ then $\lambda$ is semi-regular . 
\end{thm}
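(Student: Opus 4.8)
There are two natural routes. One would compute the Hilbert series $HS_{(\lambda)}(z)$ of the ideal $(\lambda)$ directly and compare it with $[T_{(d),n}(z)]$, invoking Theorem~\ref{bfsthm}(3). The other, which I would take, is to check the definition of semi-regularity by hand: for a single element of such high degree the defining condition has almost nothing in it. Note that $\lambda$ is nonzero, since it has a well-defined degree. Write $e = x_1\cdots x_n$, which spans the one-dimensional top piece $B^{(n)}_n$. By definition $\lambda$ is semi-regular exactly when, for every homogeneous $\mu$ with $\mu\lambda = 0$ and $\deg\mu + d < \ind(\lambda)$, one has $\mu \in (\lambda)$. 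So the first step is to determine $\ind(\lambda)$; I claim it equals $n$ in both cases, which by the definition of the index means exactly that $(\lambda)_n = B^{(n)}_n$ and $(\lambda)_{n-1} \neq B^{(n)}_{n-1}$.

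Suppose first $d = n$. Then $\lambda = e$, so $(\lambda)_n = \mathbb{F}_2 e = B^{(n)}_n$ and $(\lambda)_{n-1} = 0 \neq B^{(n)}_{n-1}$, giving $\ind(\lambda) = n$; the semi-regularity condition then requires $\deg\mu + n < n$, which is impossible, so $\lambda$ is vacuously semi-regular. Now suppose $d = n-1$ (so $n\ge 2$). Let $m_i = x_1\cdots \widehat{x_i}\cdots x_n$ be the squarefree monomial of degree $n-1$ omitting $x_i$; these form a basis of $B^{(n)}_{n-1}$, so $\lambda = \sum_{i\in S} m_i$ for some nonempty $S\subseteq\{1,\dots,n\}$. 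The one computation needed is $m_i x_j = \delta_{ij}\, e$ (it equals $e$ when $j=i$ and $0$ otherwise, since then a square $x_j^2$ appears), whence $\lambda x_j = e$ for $j\in S$ and $\lambda x_j = 0$ for $j\notin S$. Thus $(\lambda)_n = \mathbb{F}_2 e = B^{(n)}_n$, while $(\lambda)_{n-1} = \mathbb{F}_2\lambda$ is one-dimensional and $B^{(n)}_{n-1}$ has dimension $n\ge 2$; so again $\ind(\lambda) = n$. The semi-regularity condition now forces $\deg\mu + (n-1) < n$, i.e.\ $\deg\mu = 0$, so $\mu$ is a constant; and $\mu\lambda = 0$ then forces $\mu = 0 \in (\lambda)$. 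Hence $\lambda$ is semi-regular.

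\textbf{Main obstacle.} There is none of real substance: the argument is short bookkeeping once Theorem~\ref{bfsthm} (or just the definitions) is available. The only points needing a moment's care are the identity $m_i x_j = \delta_{ij}\,e$ and the fact that $(\lambda)_{n-1}$ is a \emph{proper} subspace of $B^{(n)}_{n-1}$ (which is where $n\ge 2$ enters when $d = n-1$). Via the alternative Hilbert-series route one would instead check that $HS_{(\lambda)}(z) = (1+z)^n - z^n$ when $d=n$ and $HS_{(\lambda)}(z) = (1+z)^n - z^{n-1} - z^n$ when $d=n-1$, and that these equal $[T_{(d),n}(z)]$; the added ingredient there is confirming $\ind T_{(d),n}(z) = n$, which reduces to $\binom{n}{j} > 0$ for $j < n$ together with $\binom{n}{n-1} > \binom{n}{0}$.
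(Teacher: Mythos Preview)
Your proof is correct and follows essentially the same route as the paper: establish that $\ind(\lambda)=n$ in both cases and then observe that the defining condition of semi-regularity is vacuous (for $d=n$) or reduces to the trivial case $\deg\mu=0$ (for $d=n-1$). The paper's own proof simply asserts $\ind(\lambda)=n$ and says semi-regularity ``follows trivially from the definition''; you have filled in exactly the details that justify those two assertions.
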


\begin{proof}
Let $\lambda \in B^{(n)}$ be homogeneous of degree $d$, with $n-1\leq d \leq n$. Note that $\ind (\lambda)=n$. The semi-regularity of $\lambda$ follows trivially from the definition.
\end{proof}

\begin{lem}
  Let $\lambda \in B^{(n)}$ be a monomial then $\Ann \lambda = (\var(\lambda))$ where $\var(\lambda)$ is the set of variables occurring in $\lambda$.
\end{lem}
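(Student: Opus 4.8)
The plan is to work with the standard monomial basis of $\Bn$. Since $\Bn=\gft[X_1,\dots,X_n]/(X_1^2,\dots,X_n^2)$, for each subset $S\subseteq\{1,\dots,n\}$ write $x_S=\prod_{i\in S}x_i$; the $x_S$ form an $\gft$-basis of $\Bn$, with multiplication governed by $x_S x_T = x_{S\cup T}$ if $S\cap T=\emptyset$ and $x_S x_T=0$ otherwise. Write $\lambda=x_V$, where $V=\var(\lambda)$.

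The inclusion $(\var(\lambda))\subseteq\Ann\lambda$ is immediate: for each $i\in V$ we have $x_i\lambda$ divisible by $x_i^2=0$, so $x_i\in\Ann\lambda$, and $\Ann\lambda$ is an ideal. For the reverse inclusion, take $\mu\in\Ann\lambda$ and expand it in the monomial basis as $\mu=\sum_{S\in\mathcal{S}}x_S$ for some finite family $\mathcal{S}$ of distinct subsets. Then
$$
\mu\lambda=\sum_{\substack{S\in\mathcal{S}\\ S\cap V=\emptyset}}x_{S\cup V}.
$$
The key observation is that the map $S\mapsto S\cup V$ is injective on subsets disjoint from $V$, since one recovers $S=(S\cup V)\setminus V$; hence the monomials $x_{S\cup V}$ appearing on the right-hand side are pairwise distinct basis elements. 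Therefore $\mu\lambda=0$ forces $\mathcal{S}$ to contain no subset disjoint from $V$, i.e.\ every $S\in\mathcal{S}$ satisfies $S\cap V\neq\emptyset$. Equivalently, every monomial occurring in $\mu$ is divisible by some $x_i$ with $i\in V$, so $\mu\in(\var(\lambda))$. This gives $\Ann\lambda\subseteq(\var(\lambda))$ and completes the proof.

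There is no serious obstacle here; the only point requiring a little care is the injectivity of $S\mapsto S\cup V$, which ensures no cancellation occurs among the terms of $\mu\lambda$ and lets us read off the annihilation condition coordinatewise on the basis. Everything else is bookkeeping with the monomial basis of $\Bn$.
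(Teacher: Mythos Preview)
Your proof is correct and follows essentially the same approach as the paper: both expand $\mu$ in the monomial basis, observe that distinct monomials disjoint from $\var(\lambda)$ map to distinct monomials under multiplication by $\lambda$ (your injectivity of $S\mapsto S\cup V$ is exactly the paper's ``$\nu_i\lambda\neq\nu_j\lambda$ unless both vanish''), and conclude that every monomial in $\mu$ must share a variable with $\lambda$. Your set-theoretic bookkeeping is a bit more explicit, but the argument is the same.
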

\begin{proof}
  The inclusion $(\var(\lambda)) \subset \Ann \lambda$ is clear. Let 
  $\nu \in \Ann \lambda$ and write $\nu=\nu_1+\cdots +\nu_r$, where the $\nu_i$ are distinct monomials. Since 
  $\lambda$ is a monomial for $i \neq j$ it follows that 
  $\nu _i \lambda \neq \nu_j \lambda$ unless $\nu _i \lambda = \nu_j \lambda=0$. Thus $\lambda \nu_j = 0$ and so
  $\lambda$ and $\nu_j$ must share some $x_i$. Hence 
  $\nu_j \in (\var(\lambda))$. 
\end{proof}

\begin{defn}
Let $\lambda \in B^{(n)}$ be a homogeneous polynomial of degree $d$. $\lambda$ can be written in the form
$$\lambda=\sum_{m\in \mu_d}\epsilon_mm,$$where $\mu_d$ is the set of monomials of degree $d$, and $\epsilon_m$ is either $1$ or $0$. The \textit{support} of $\lambda$ is defined as
$$\Supp(\lambda)=\{m \mid \epsilon_m=1\}.$$
\end{defn}

\begin{prop} \label{indlam}
 Let $\lambda \in B^{(n)}$ be homogeneous of degree $d$. Then $\ind(\lambda)> n-d$. 
\end{prop}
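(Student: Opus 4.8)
The plan is to prove the equivalent assertion that $(\lambda)\cap\Bn_{n-d}\neq \Bn_{n-d}$, and then bootstrap. Since $\Bn$ is strongly graded, the set $S=\{e\ge 0\mid (\lambda)\cap\Bn_e=\Bn_e\}$ is upward closed: if $\Bn_e\subseteq(\lambda)$ then $\Bn_{e+1}=\Bn_1\Bn_e\subseteq(\lambda)$. Hence, once we know $n-d\notin S$, we get $\{0,1,\dots,n-d\}\cap S=\emptyset$, so $\ind(\lambda)=\min S> n-d$. (We assume $d\ge 1$, as usual for semi-regularity, so that $\lambda^2=0$; if $\lambda=0$ the statement is vacuous.)

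To produce a degree $n-d$ element not in $(\lambda)$, I would exploit the pairing given by multiplication into the one-dimensional top piece: define the $\bbF_2$-linear functional $\phi\colon \Bn_{n-d}\to\Bn_n\cong\bbF_2$ by $\phi(g)=g\lambda$. First I check $\phi\neq 0$: choose any monomial $m\in\Supp(\lambda)$ and let $m^{*}$ be the product of the $n-d$ variables \emph{not} occurring in $m$. Then $m^{*}m=x_1\cdots x_n$, while for every other $m'\in\Supp(\lambda)$ of degree $d$ we have $m'\neq m=(m^{*})^{*}$, so $m'$ must involve a variable of $m^{*}$, forcing $m^{*}m'=0$; therefore $\phi(m^{*})=1$ and $\Ker\phi$ is a proper subspace of $\Bn_{n-d}$. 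Next, since $\lambda$ is homogeneous of degree $d$, the degree $n-d$ component of the ideal $(\lambda)$ is exactly $\lambda\,\Bn_{n-2d}$ (interpreted as $0$ when $n<2d$), so any $g\in(\lambda)\cap\Bn_{n-d}$ can be written $g=\lambda h$ with $h\in\Bn_{n-2d}$; then $\phi(g)=g\lambda=\lambda h\lambda=h\lambda^{2}=0$. Thus $(\lambda)\cap\Bn_{n-d}\subseteq\Ker\phi\subsetneq\Bn_{n-d}$, completing the argument.

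The argument is essentially bookkeeping anchored on $\lambda^{2}=0$ and the nondegenerate top-degree pairing (morally, $\lambda\in\Ann(\lambda)\cap\Bn_d$ is a nonzero annihilator of $\lambda$, and this is visible in the pairing), so I do not expect a genuine obstacle. The only points needing a little care are the verification that $\phi\neq 0$ and the handling of the degenerate ranges $n\le 2d$ and $d=n$, where $(\lambda)\cap\Bn_{n-d}$ is already $0$ and the inclusion into $\Ker\phi$ is automatic.
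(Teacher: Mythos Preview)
Your proof is correct and takes a genuinely different route from the paper's. Both arguments ultimately exhibit the complementary monomial $m^{*}$ of some $m\in\Supp(\lambda)$ as a degree-$(n-d)$ element outside $(\lambda)$, but the mechanisms differ. The paper argues by direct contradiction: assuming $\nu\lambda = m^{*}$ for some $\nu\in\Bn_{n-2d}$, it decomposes $\lambda$ and $\nu$ with respect to a single variable $x_1$, extracts the identity $\lambda_1\nu_0=\lambda_0\nu_1$, and then derives $m^{*}\lambda_1=\lambda_0^{2}\nu_1=0$, contradicting the annihilator lemma for monomials. Your approach is more structural: you use the multiplication pairing $\Bn_{n-d}\times\Bn_d\to\Bn_n$ and the global fact $\lambda^{2}=0$ to show all of $(\lambda)_{n-d}=\lambda\Bn_{n-2d}$ lies in $\Ker\phi$, then exhibit $m^{*}\notin\Ker\phi$. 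Your argument is shorter and more conceptual (it isolates exactly the reason---$\lambda$ annihilates itself---rather than rediscovering it through a coordinate computation), and it uniformly handles the degenerate range $n<2d$. The paper's argument, on the other hand, makes explicit use of the monomial-annihilator lemma and gives a concrete feel for why the specific monomial cannot be hit.
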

\begin{proof}
  By renumbering we may assume $x_1\cdots x_d \in \Supp(\lambda)$. We 
  demonstrate $x_{d+1}\cdots x_n \notin (\lambda)$. Suppose for 
  the sake of contradiction that we have $\nu \in B^{(n)}_{n-2d}$ such that
  $\nu \lambda = x_{d+1}\cdots x_n$. Writing $\nu$ and $\lambda$ as
  polynomials in $x_1$, i.e. $\lambda=x_1\lambda_1+\lambda_0$ and $\nu=x_1\nu_1+\nu_0$ then  
  \[\lambda_0\nu_0+x_1(\lambda_1\nu_0+\lambda_0\nu_1)=x_{d+1}\cdots x_n.\]
  So $\lambda_1 \nu_0+\lambda_0\nu_1 \in \Ann x_1$, but $x_1 \notin
  \var(\lambda_1 \nu_0+\lambda_0\nu_1)$ therefore $\lambda_1 \nu_0=
  \lambda_0\nu_1$. In particular 
  \[x_{d+1} \cdots x_n \lambda_1 = \lambda_0\nu_0 \lambda_1=\lambda_0^2\nu_1=0,\]
  so $\lambda_1 \in \Ann x_{d+1}\cdots x_n=(x_{d+1},\cdots,x_n)$ 
  but $x_2\cdots x_d \in \Supp \lambda_1$, which is impossible.
\end{proof}
Now, we will use a result that appears in \cite{hps} about the $\textit{first fall degree}$ of a homogeneous polynomial $\lambda \in B^{(n)}$. Basically, the first fall degree of $\lambda$ is the first degree at which non-trivial relations occur; trivial relation such as $g\lambda=0$ where $g \in (\lambda)$. In other words the first fall degree of $\lambda$  is the first $k$ such that there exists $g$ in $B^{(n)}$ with the property  that $\deg g + \deg \lambda=k$, $g\lambda=0$ and $g \not\in (\lambda)$. In \cite{hps} the authors give a more detailed and general definition for the first fall degree.

\begin{defn}
Let $\lambda$ be a homogeneous element of $B^{(n)}$. The \textit{rank} of $\lambda$ is the smallest integer $s$ such that there exist $\mu_1,\dots,\mu_s \in B^{(n)}_1$ with  $\lambda\in \bbF_2[\mu_1,\dots,\mu_s]$. That is, $s$ is the smallest number of linear elements required to generate $\lambda$.
\end{defn}

\begin{thm} \label{ffdelement}
  Let $\lambda$ be an element of degree $d>1$ and rank $s$. Then 
  $\ffd (\lambda) \leq (s+d+2)/2$. 
\end{thm}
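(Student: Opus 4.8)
The plan is to reduce to the case where $\lambda$ lives in the smallest possible polynomial ring, then exhibit an explicit degree-falling relation. Since $\lambda$ has rank $s$, there exist linearly independent linear forms $\mu_1, \dots, \mu_s$ with $\lambda \in \bbF_2[\mu_1, \dots, \mu_s]$; after a linear change of variables we may assume $\mu_i = x_i$, so that $\lambda$ is a homogeneous polynomial of degree $d$ in the variables $x_1, \dots, x_s$ only. The key observation is that in $B^{(n)}$ the product $x_{s+1} x_{s+2} \cdots x_n$ annihilates $\lambda$, but this is a "trivial" relation only if $x_{s+1}\cdots x_n \in (\lambda)$, which it need not be — and in any case its degree $n - s$ is typically far too large to give the bound we want. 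So instead I would look for a much smaller annihilating multiplier built from the structure of $\lambda$ itself.

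First I would write $\lambda$ as a polynomial in $x_1$: $\lambda = x_1 \lambda_1 + \lambda_0$ with $\lambda_0, \lambda_1$ not involving $x_1$, where $\deg \lambda_1 = d-1$ and $\deg \lambda_0 = d$ (or $\lambda_1 = 0$, but then $x_1 \notin \var(\lambda)$ contradicting minimality of the rank, so $\lambda_1 \neq 0$). The point is that $\lambda_1$ has degree $d-1$ and rank at most $s-1$. The natural move is an inductive or recursive descent: repeatedly peeling off variables produces "partial derivative"-type elements of strictly smaller rank and degree, and I want to combine a product of such peeled-off pieces with the complementary variables to kill $\lambda$. Concretely, I expect a relation of the shape
$$
\Bigl( \prod_{i \in S} x_i \Bigr) \cdot (\text{small correction}) \cdot \lambda = 0
$$
where $S$ has size roughly $(s - d)/2$ or so, with the degree of the whole multiplier coming out to about $(s + d)/2 - d = (s-d)/2$, giving $\ffd(\lambda) \le \deg(\text{multiplier}) + d \le (s+d+2)/2$. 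The "$+2$" and the division by $2$ strongly suggest a symmetric pairing argument: pair up variables (or pair $\lambda_0$ against $\lambda_1$) so that each pairing step reduces a "gap" of size roughly $s - d$ by $2$, at a cost of $1$ in the multiplier degree per step.

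The main obstacle will be showing that the annihilating multiplier we construct genuinely lies outside $(\lambda)$ — i.e., that the relation is not trivial — since the definition of $\ffd$ requires $g \notin (\lambda)$. For this I would argue by degrees and supports, much as in the proof of Proposition \ref{indlam}: if $g\lambda = 0$ with $g \in (\lambda)$, write $g = h\lambda$, but then $\deg g \ge d$, and since $\lambda^2 = 0$ automatically, the genuinely new content must come from a multiplier of degree $< d$ or from a support argument showing the specific $g$ we built cannot be a multiple of $\lambda$. A clean way around this is to choose $g$ of degree $< d$ outright when $s - d < d$ (which is the regime that matters for the bound, since $(s+d+2)/2 - d = (s-d+2)/2 \le d$ forces $s \le 3d - 2$, i.e. $s$ not much larger than $d$); any nonzero $g$ with $\deg g < \deg \lambda = d$ and $g\lambda = 0$ automatically satisfies $g \notin (\lambda)$ for degree reasons. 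So the real work is the existence of a nonzero annihilator of $\lambda$ of degree at most $(s - d + 2)/2$, which I would extract from the peeling/pairing recursion above, tracking carefully that at each of the $\lceil (s-d)/2 \rceil$ steps the degree increases by exactly one and the construction stays nonzero.
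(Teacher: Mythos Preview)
The paper does not actually prove this result; it simply cites Theorem~4.9 of \cite{hps}. So there is no in-paper argument to compare against, but your sketch has two real gaps that would prevent it from being completed as written.

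First, the ``peeling/pairing'' construction of the annihilator is never made concrete. Writing $\lambda = x_1\lambda_1 + \lambda_0$ is a reasonable first move, but you do not say what combination of peeled pieces and variables is supposed to annihilate $\lambda$, nor why that combination is nonzero. The sentence ``each pairing step reduces a gap of size roughly $s-d$ by $2$ at a cost of $1$ in the multiplier degree'' has the right arithmetic flavor for the bound $(s+d+2)/2$, but it is not a construction, and I do not see a way to turn the decomposition $\lambda = x_1\lambda_1+\lambda_0$ into an explicit $g$ of degree $\le (s-d+2)/2$ with $g\lambda=0$.

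Second, and more seriously, your non-triviality argument only covers the range $s \le 3d-2$. You observe correctly that a nonzero $g$ with $\deg g < d$ automatically lies outside $(\lambda)$; but the multiplier you need has degree up to $(s-d+2)/2$, and this is $< d$ only when $s \le 3d-2$. Your parenthetical that this is ``the regime that matters for the bound'' is exactly backwards: the application in Theorem~\ref{thmbound} invokes the inequality with $s$ as large as $n$, where $n>3d$, so the content of the theorem lies precisely in the range your degree argument cannot reach. For $s \ge 3d-2$ you give no mechanism at all for showing $g \notin (\lambda)$.

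The ingredient your plan is missing is the Poincar\'e (Gorenstein) duality of $B^{(s)}$: multiplication induces a perfect pairing $B^{(s)}_k \times B^{(s)}_{s-k} \to B^{(s)}_s \cong \bbF_2$, under which $\Ann(\lambda)_k$ and $(\lambda)_{s-k}$ are orthogonal complements. This forces the ``homology'' $H^k = \Ann(\lambda)_k/(\lambda)_k$ of the complex $(B^{(s)},\cdot\lambda)$ to satisfy $\dim H^k = \dim H^{s-k}$, and it is this symmetry about $s/2$, together with a dimension/Euler-characteristic count, that locates the first nonvanishing $H^k$ at or below $(s-d+2)/2$ regardless of how large $s$ is. Without duality there is no way in your outline to control $g\notin(\lambda)$ once $\deg g \ge d$.
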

\begin{proof}
  See Theorem 4.9 in \cite{hps}.
\end{proof}

This enables us to give a result on the non-existence of semi-regular elements of degree $d\geq 2$ when $n >3d$.

\begin{thm}\label{indffd2}
Let $\lambda_1,\dots,\lambda_m$ be a sequence of homogeneous polynomials of degrees $d_1,\dots,d_m$ in $B^{(n)}$. If $\ind (\lambda_1,\dots,\lambda_m)>\ffd (\lambda_i)$ for some $1\leq i \leq m$ then $\lambda_1,\dots,\lambda_m$ is not semi-regular.
\end{thm}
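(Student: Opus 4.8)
The plan is to argue by contraposition: assume $\lambda_1,\dots,\lambda_m$ is semi-regular and set $D = \ind(\lambda_1,\dots,\lambda_m)$; we will show $D \leq \ffd(\lambda_i)$ for every $i$. Fix an index $i$ and let $k = \ffd(\lambda_i)$. By the definition of first fall degree recalled just above the statement, there exists $g \in B^{(n)}$ with $\deg g + d_i = k$, $g\lambda_i = 0$, and $g \notin (\lambda_i)$. The natural thing to do is to feed this $g$ into the definition of $D$-semi-regularity, but that definition involves the \emph{initial segment} ideal $(\lambda_1,\dots,\lambda_{i-1})$, not the single element $\lambda_i$. So the first step is to reduce to the case $i=1$: by Theorem~\ref{bfsthm}(4), semi-regularity is invariant under permuting the sequence, and $\ind$ is too (it depends only on the ideal $(\lambda_1,\dots,\lambda_m)$), so we may reorder so that $\lambda_i$ comes first. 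Now the relevant condition is: if $\mu$ is homogeneous with $\mu\lambda_1 \in (0) = 0$ and $\deg\mu + d_1 < D$, then $\mu \in (\lambda_1)$.

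The second step is to exploit $g$. We have $g\lambda_1 = 0$ and $g \notin (\lambda_1)$, and $\deg g + d_1 = k$. If $k < D$, then taking $\mu = g$ (after replacing $g$ by a homogeneous component if necessary — note $g\lambda_1 = 0$ and $g \notin (\lambda_1)$ can be arranged with $g$ homogeneous, since $(\lambda_1)$ is a homogeneous ideal and $\lambda_1$ is homogeneous, so some homogeneous component of $g$ already witnesses the failure) the $D$-semi-regularity condition forces $g \in (\lambda_1)$, a contradiction. Therefore $k \geq D$, i.e.\ $\ffd(\lambda_i) \geq D = \ind(\lambda_1,\dots,\lambda_m)$. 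Contrapositively, if $\ind(\lambda_1,\dots,\lambda_m) > \ffd(\lambda_i)$ for some $i$, the sequence cannot be semi-regular.

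The one point requiring a little care — and the likely main obstacle — is the bookkeeping around homogeneity and the precise form of the first fall degree definition from \cite{hps}: one must make sure that the witness $g$ for $\ffd(\lambda_i)$ can be taken homogeneous of degree exactly $k - d_i$, so that it plugs directly into the homogeneous definition of $D$-semi-regularity. Since $\lambda_i$ is homogeneous and $(\lambda_i)$ is a graded ideal of the graded ring $B^{(n)}$, if $g = \sum_j g_j$ (homogeneous decomposition) satisfies $g\lambda_i = 0$ and $g \notin (\lambda_i)$, then each $g_j\lambda_i = 0$ and at least one $g_j \notin (\lambda_i)$; that $g_j$ has degree $k - d_i$ because $\ffd(\lambda_i) = k$ is the \emph{first} such degree and no smaller one works, while degrees larger than $k - d_i$ either don't occur among the $g_j$ contributing to the failure or would contradict minimality once we observe that any homogeneous relation in degree $k' > k$ that is not in $(\lambda_i)$ would itself be a first-fall witness unless it is accounted for, which it cannot be below degree $k$. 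Once this is pinned down, the rest is immediate from Theorem~\ref{bfsthm}(4) and the definition of $D$-semi-regularity.
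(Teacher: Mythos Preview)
Your proposal is correct and follows essentially the same route as the paper's proof: both reduce to $i=1$ via Theorem~\ref{bfsthm}(4), then use the first-fall-degree witness $g$ with $g\lambda_1=0$, $g\notin(\lambda_1)$, and $\deg g+d_1=\ffd(\lambda_1)<\ind(\lambda_1,\dots,\lambda_m)$ to directly contradict $D$-semi-regularity at $i=1$. The only difference is that the paper does not pause over the homogeneity of $g$; in the graded setting the definition of $\ffd$ is understood with $g$ homogeneous (as the paper's phrasing ``$\deg g+\deg\lambda=k$'' indicates), so your third paragraph, while not wrong, is unnecessary bookkeeping.
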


\begin{proof}
Suppose $\lambda_1,\dots,\lambda_m$ is a sequence of homogeneous polynomials such that $\ind (\lambda_1,\dots,\lambda_m)>\ffd (\lambda_i)$ for some $1\leq i \leq m$. If $\lambda_1,\dots,\lambda_m$ is a semi-regular sequence then by Theorem \ref{bfsthm} any reordering of this sequence is also a semi-regular sequence. Thus, without loss of generality we can assume that $\ind (\lambda_1,\dots,\lambda_m)>\ffd (\lambda_1)$. By definition of first fall degree there exists $g$ such that $g\lambda_1=0$, $\deg g + \deg \lambda_1=\ffd(\lambda_1)<\ind (\lambda_1,\dots,\lambda_m)$ and $g \not\in (\lambda_1)$. But it is not possible if $\lambda_1,\dots,\lambda_m$ is semi-regular. Therefore $\lambda_1,\dots,\lambda_m$ cannot be semi-regular.
\end{proof}

\begin{cor}\label{indffd}
Let $\lambda$ be homogeneous. If $\ind (\lambda)>\ffd (\lambda)$ then $\lambda$ is not semi-regular.
\end{cor}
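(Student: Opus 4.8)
The plan is to obtain this as the special case $m=1$ of Theorem \ref{indffd2}. Given a homogeneous $\lambda$ with $\ind(\lambda) > \ffd(\lambda)$, apply that theorem to the length-one sequence consisting of $\lambda$ alone: the hypothesis ``$\ind(\lambda_1,\dots,\lambda_m) > \ffd(\lambda_i)$ for some $i$'' reduces to $\ind(\lambda) > \ffd(\lambda)$, which holds by assumption, and the conclusion is exactly that $\lambda$ is not semi-regular. So there is essentially nothing new to do, and the corollary is really just the statement of Theorem \ref{indffd2} specialized to a single element.

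If one prefers a self-contained argument, it is short. Set $D = \ind(\lambda)$. By the definition of first fall degree there is a homogeneous $g$ with $g\lambda = 0$, $\deg g + \deg\lambda = \ffd(\lambda)$, and $g \notin (\lambda)$. Since $\ffd(\lambda) < D$, we may take $\mu = g$ and $i = 1$ in the definition of $D$-semi-regularity: here $(\lambda_1,\dots,\lambda_{i-1}) = (0)$, so $\mu\lambda_i = g\lambda = 0 \in (0)$ and $\deg\mu + \deg\lambda < D$, and $D$-semi-regularity would force $g = \mu \in (\lambda_i) = (\lambda)$, contradicting $g \notin (\lambda)$. Hence $\lambda$ is not $D$-semi-regular, i.e.\ not semi-regular.

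There is no real obstacle here. The one point worth noting is the degenerate case in which $\ffd(\lambda)$ is infinite (for instance when $\lambda$ is linear, since then $\Ann(\lambda) = (\lambda)$ and no nontrivial relation occurs); in that case the hypothesis $\ind(\lambda) > \ffd(\lambda)$ cannot hold, so the statement is vacuous and again there is nothing to prove. Combined with Proposition \ref{indlam} and Theorem \ref{ffdelement}, this corollary is the tool that yields the bound $n \le 3d$ for semi-regular elements of degree $d \ge 2$, which presumably follows next.
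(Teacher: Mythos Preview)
Your proposal is correct and matches the paper's approach: the corollary is stated immediately after Theorem \ref{indffd2} with no separate proof, so the paper implicitly treats it as the $m=1$ specialization, exactly as you do. Your optional self-contained argument is also fine and simply unwinds the proof of Theorem \ref{indffd2} in this special case.
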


\begin{thm}\label{thmbound}
  There are no semi-regular elements of degree $d\geq 2$ for $n > 3d$.
\end{thm}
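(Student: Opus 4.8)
The plan is to combine the lower bound on the index from Proposition \ref{indlam} with the upper bound on the first fall degree from Theorem \ref{ffdelement}, and then invoke Corollary \ref{indffd}. So the whole argument reduces to a short numerical comparison.

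First I would fix a homogeneous $\lambda \in B^{(n)}$ of degree $d \geq 2$ with $n > 3d$, and let $s = \rank(\lambda)$. The key elementary observation is that $s \leq n$: the whole algebra $B^{(n)}$ is generated by the linear elements $x_1, \dots, x_n$, so in particular $\lambda \in \bbF_2[x_1, \dots, x_n]$ and hence $s \leq n$. By Theorem \ref{ffdelement}, this gives
$$
\ffd(\lambda) \leq \frac{s+d+2}{2} \leq \frac{n+d+2}{2}.
$$

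Next I would apply Proposition \ref{indlam}, which gives $\ind(\lambda) > n-d$; since the index is an integer, $\ind(\lambda) \geq n-d+1$. Now the hypothesis $n > 3d$ rearranges to $2(n-d+1) = 2n-2d+2 > n+d+2$, i.e. $n-d+1 > (n+d+2)/2$. Chaining the inequalities,
$$
\ind(\lambda) \geq n-d+1 > \frac{n+d+2}{2} \geq \ffd(\lambda),
$$
so $\ind(\lambda) > \ffd(\lambda)$. By Corollary \ref{indffd}, $\lambda$ is therefore not semi-regular, which is exactly what we wanted.

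I do not anticipate a genuine obstacle here: all of the substantive content is carried by the cited/previously established facts (the index bound of Proposition \ref{indlam} and the first fall degree bound of Theorem \ref{ffdelement}, which in turn rests on the result of \cite{hps}). The only point that needs a moment's care is the trivial but essential bound $\rank(\lambda) \leq n$, which is what lets the first fall degree estimate be expressed purely in terms of $n$ and $d$; after that it is just the arithmetic showing that $n > 3d$ forces the index to overtake the first fall degree.
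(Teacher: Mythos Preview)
Your proof is correct and follows essentially the same route as the paper: bound $\ffd(\lambda)$ above via Theorem~\ref{ffdelement} using $s\le n$, bound $\ind(\lambda)$ below via Proposition~\ref{indlam}, check that $n>3d$ forces $\ind(\lambda)>\ffd(\lambda)$, and conclude via Corollary~\ref{indffd}. The arithmetic and the ingredients are identical to the paper's argument.
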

\begin{proof}  Let $\lambda$ be a homogeneous element with $\deg \lambda =d>1$ and suppose that $n>3d$. Then $(n+d)/2<n-d$. Since the rank $s$ of $\lambda$ is less than or equal to $n$, we have by Theorem \ref{ffdelement} and Proposition \ref{indlam} that
$$
\ffd (\lambda) \leq \frac{s+d+2}{2}\leq \frac{n+d+2}{2} < n-d+1 \leq \ind(\lambda).
$$
Since the first fall degree of $\lambda$ is less than its index it cannot be semi-regular.
\end{proof}

Next theorem gives a complete description of the proportion of quadratic  semi-regular elements.
\begin{thm} \label{m1d2} There are no semi-regular elements of degree 2 for $n \geq 7$. That is, $\pi(n,1,2) = 0$ for $n\geq7$. For $2\leq n\leq 6$ the value of $\pi(n,1,2)$ is given by the table
$$\begin{array}{c|c|c|c|c|c}
n & 2&3&4&5&6\\
\hline
\pi(n,1,2) &\ 1\ &\ 1\ &0.44&0.85&0.42\\
\end{array}
$$
\end{thm}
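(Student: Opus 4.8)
The statement has two parts: the vanishing $\pi(n,1,2)=0$ for $n\geq 7$, and the explicit table for $2\leq n\leq 6$. The first part is immediate from Theorem~\ref{thmbound} applied with $d=2$, since $n\geq 7$ forces $n>3d=6$. When $n=2$ or $n=3$, every nonzero element of $\Bn_2$ has degree $n$ or $n-1$, hence is semi-regular by Theorem~\ref{trivialsem}, so $\pi(2,1,2)=\pi(3,1,2)=1$. The substance is therefore in the cases $n=4,5,6$, and the plan there is to reduce the problem to a finite check indexed by the rank of $\lambda$.

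A homogeneous element $\lambda\in\Bn_2$ is the datum of an alternating bilinear form on $\gft^n$ (no squares of variables occur); the group $\mathrm{GL}_n(\gft)$ acts on $\Bn$ by graded algebra automorphisms, its orbits on $\Bn_2$ are exactly the rank classes, and the rank of $\lambda$ in the sense of the paper (fewest linear forms needed to express $\lambda$) agrees with the rank $2r$ of the associated form. Hence, up to an automorphism of $\Bn$, $\lambda=h_r:=x_1x_2+x_3x_4+\cdots+x_{2r-1}x_{2r}$, so semi-regularity of $\lambda$ depends only on $r$. Since $h_r$ uses only $x_1,\dots,x_{2r}$, we have $\Bn/(h_r)\cong\bigl(B^{(2r)}/(h_r)\bigr)\otimes_{\gft}B^{(n-2r)}$, whence $HS_{(h_r)}(z)=g_r(z)(1+z)^{n-2r}$ with $g_r(z):=HS_{B^{(2r)}/(h_r)}(z)$. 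I would record $g_0=1$, $g_1=1+2z$, $g_2=1+4z+5z^2$, and $g_3=1+6z+14z^2+14z^3+z^4$, obtained by computing the ranks of the multiplication maps $B^{(2r)}_{k-2}\overset{h_r}{\longrightarrow}B^{(2r)}_k$ (equivalently via \cite[Theorem~2.1]{dhkst}); the only slightly delicate point, for $g_3$, is that the kernel of $B^{(6)}_2\overset{h_3}{\longrightarrow}B^{(6)}_4$ is exactly $\gft\,h_3$.

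By Theorem~\ref{bfsthm}(3) a rank-$2r$ element is semi-regular if and only if $g_r(z)(1+z)^{n-2r}=\left[T_{(2),n}(z)\right]$, and comparing coefficients in the finitely many relevant cases shows this holds precisely when $2r$ is the largest available rank, namely $2r=n$ for $n$ even and $2r=n-1$ for $n$ odd. For the non-maximal ranks one gets the same conclusion more cheaply from Corollary~\ref{indffd}: since $\Bn$ is strongly graded, $HS_{(h_r)}(z)$ has no internal zeros, so $\ind(h_r)$ exceeds the top degree of $g_r(z)(1+z)^{n-2r}$ by exactly $1$, while $\ffd(h_r)\leq(2r+2+2)/2=r+2$ by Theorem~\ref{ffdelement}; one checks $\ind(h_r)>r+2$ for the non-maximal ranks that occur with $n\leq 6$, namely rank $2$ (for $n\in\{4,5,6\}$) and rank $4$ (for $n=6$). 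Finally one counts: the number of elements of $\Bn_2$ of rank $2r$ is $\binom{n}{2r}_2\cdot N_r$, where $\binom{n}{2r}_2$ is the Gaussian binomial coefficient counting the choices of the $(n-2r)$-dimensional radical and $N_r=|\mathrm{GL}_{2r}(\gft)|/|\mathrm{Sp}_{2r}(\gft)|=2^{r(r-1)}\prod_{j=1}^{r}(2^{2j-1}-1)$ is the number of nondegenerate alternating forms in $2r$ variables; dividing the maximal-rank count by $2^{\binom{n}{2}}-1$ yields $28/63$, $868/1023$, and $13888/32767$ for $n=4,5,6$, which round to $0.44$, $0.85$, and $0.42$.

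I do not expect a genuine obstacle: once the reduction to rank is in place the rest is bookkeeping. The most laborious single step done by hand is the Hilbert series $g_3$ of $B^{(6)}/(h_3)$, and this is sidestepped by invoking \cite{dhkst}. The two facts that should be explicitly justified rather than assumed are that the $\mathrm{GL}_n(\gft)$-orbits on $\Bn_2$ are the rank classes and that the paper's rank of $\lambda$ equals the rank of its alternating form — both standard over $\gft$.
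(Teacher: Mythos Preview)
Your argument is correct and follows essentially the same route as the paper: reduce to the rank of the quadratic element, compute the Hilbert series for the canonical representatives $h_r=x_1x_2+\cdots+x_{2r-1}x_{2r}$, compare against $[T_{(2),n}(z)]$ via Theorem~\ref{bfsthm}(3), and count elements of each rank. Your presentation is a bit more systematic---you use the tensor factorisation $HS_{(h_r)}(z)=g_r(z)(1+z)^{n-2r}$ so that each $g_r$ is computed once, you give closed-form counts via Gaussian binomials and $|\mathrm{GL}_{2r}|/|\mathrm{Sp}_{2r}|$ rather than bare numbers, and you add the shortcut through Corollary~\ref{indffd} and Theorem~\ref{ffdelement} to rule out the non-maximal ranks without comparing full Hilbert series---but none of this changes the underlying strategy.
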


\begin{proof}First part is just a consequence of Theorem \ref{thmbound}. 

For the cases $n=2,\dots,6$, we can compute the Hilbert series of a polynomial of a given rank using the specific case of $x_1x_2, x_1x_2+x_3x_4$ and $x_1x_2+x_3x_4+x_5x_6$. The cases $n=2$ and $3$ follow from Theorem \ref{trivialsem}. When $n=4$ the Hilbert series of a rank two element is $1+4z+5z^2+2z^3$ and that of a rank 4 element is $1+4z+5z^2$ which is $[T_{(2),4}(z)]$. Thus the rank four elements are semi-regular and the rank two elements are not. There are 28 quadratic homogeneous elements of rank four and 35 elements of rank two. Thus the proportion of semi-regular elements is $28/63=0.44$. In the case $n=5$, the Hilbert series of a rank two element is $1+5z+9z^2+7z^3+2z^4$ and that of a rank 4 element is $1+5z+9z^2+5z^3+z^4 =[T_{(2),5}(z)]$. The number of elements of ranks two and four  is respectively $155$ and $868$ yielding a proportion of semi-regular elements of $0.85$. When $n=6$ the Hilbert series of a rank two element is $1+6z+14z^2+16z^3+9z^4+2z^5$, that of a rank 4 element is $1+6z+14z^2+14z^3+5z^4$ and that of a rank six element is $1+6z+14z^2+14z^3+z^4 = [T_{(2),6}(z)]$. The number of elements of ranks two, four and six is respectively $651$, $18,228$ and $13,888$ yielding a proportion of semi-regular elements of $0.43$.
\end{proof}

In her thesis \cite{b}, Bardet asserts that the element $\sum_{1\leq i<j\leq n} x_ix_j$ is semi-regular for all $n$ over $\bbF_2$. Theorem \ref{m1d2} implies that this element is in fact not semi-regular for any $n\geq 7$.

Theorem \ref{thmbound} tells us that there are not semi-regular elements of degree $d\geq2$ for $n>3d$. We consider the case of a single homogeneous element of arbitrary degree $d\leq n/3$. The table below gives some experimental data for samples of 20 homogeneous polynomials of degree $d$ in $n$ variables.
\begin{table}[h]
$$
\begin{array}{c|c|c|c|c|c|c|c|c|c|c|c|c|c|}
n\backslash d&2&3&4&5&6&7&8&9&10\\
\hline
4&.5&1&1&&&&&&\\
5&.9&0&1&1&&&&&\\
6&.45&1&.95&1&1&&&&\\
7&&0&1&0&1&1&&& \\
8&&1&.25&1&.25&1&1&&\\
9&&0&1&.65&1&0&1&1&\\
10&&&.5&1&0&1&.5&1&1\\
\end{array}
$$
\caption{Proportion of Samples of 20 Homogeneous Elements of Degree $d$  in $n$ variables that are Semi-Regular}
\end{table}

Note that the ones on the upper two diagonals reflect that fact that all elements of degree $n-1$ or $n$ are semi-regular, whereas the ones on the other diagonals reflect only a high probability of semi-regularity since a monomial of degree less than $n-1$ is never semi-regular. In the following section we show that if $d=2^t$ and $n=3d$ then $\sigma_{n,d}$ is semi-regular, thus establishing that the bound $d\leq n/3$ is sharp for infinitely many $n$.

Now, we will give a complete description of the truncated series
$$\left[ \frac{(1+z)^n}{1+z^d}\right ]$$
when $n\leq 3d$. First, note that 
\begin{align*}
\frac{(1+z)^n}{1+z^d}&=(1+z)^n(1-z^d+z^{2d}+\cdots+(-1)^jz^{jd}+\cdots).\\
\end{align*}
Therefore,
\begin{align}\label{eq:eq1}
\frac{(1+z)^n}{1+z^d}&=\sum_{k=0}^\infty\gamma(n,k,d)z^k
\end{align}
where 
$$\gamma (n,k,d)=\sum_{j=0}^{\lfloor k/d \rfloor}(-1)^j\binom{n}{k-jd}.$$

\begin{lem}\label{lemma bi}
Let $n$, and $d$ be two natural numbers.
\begin{enumerate}[(a)]
\item
If $k$ is a non-negative integer number such that $k <\lceil (n+d)/2 \rceil$ then
$$\binom{n}{k}-\binom{n}{k-d}>0$$
\item
If $n+d$ is odd and $k=\lceil (n+d)/2\rceil=(n+d+1)/2$ then
$$\binom{n}{k}-\binom{n}{k-d}+\binom{n}{0}\leq 0$$
\end{enumerate}
\end{lem}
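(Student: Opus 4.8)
The plan is to reduce both inequalities to the strict unimodality of a single row of Pascal's triangle: for fixed $n$, the map $j\mapsto\binom{n}{j}$ on $\{0,1,\dots,n\}$ is symmetric, $\binom{n}{j}=\binom{n}{n-j}$, and is a strictly increasing function of the \emph{depth} $\delta(j):=\min(j,\,n-j)$, while $\binom{n}{j}=0$ for $j<0$ or $j>n$. I will also use that $1\le d\le n$; this is harmless, since the lemma is only ever applied to an element $\lambda$ of degree $d$ in $B^{(n)}$, and for $d>n$ part (a) is literally false (take $n=3$, $d=11$, $k=6$). With this setup each part becomes a one-line comparison of depths. The intuitive picture is that the difference $\binom{n}{k}-\binom{n}{k-d}$ vanishes exactly at the symmetry point $k=(n+d)/2$ (valid when $n+d$ is even, by $\binom{n}{(n+d)/2}=\binom{n}{(n-d)/2}$), is positive strictly below it, and is $\le-1$ already one step past it.

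For (a), first observe that $k<\lceil(n+d)/2\rceil$ forces $2k<n+d$ (check the two parities of $n+d$ separately), and $2k<n+d\le 2n$ then gives $0\le k<n$, so $\binom{n}{k}\ge1$. If $k<d$ then $\binom{n}{k-d}=0<\binom{n}{k}$ and we are done, so assume $k\ge d$. Then $2k<n+2d$ gives $k-d\le n-(k-d)$, hence $\delta(k-d)=k-d$; and $k-d<k$ together with $k-d<n-k$ (from $2k<n+d$) gives $k-d<\min(k,\,n-k)=\delta(k)$. Thus $\delta(k-d)<\delta(k)$, whence $\binom{n}{k-d}<\binom{n}{k}$, which is exactly $\binom{n}{k}-\binom{n}{k-d}>0$.

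For (b), we have $n+d$ odd and $k=(n+d+1)/2$, so $2k=n+d+1$. Since $2k>n$ we get $k>n-k$ and $\delta(k)=n-k$. The identity $k-d=(n-d+1)/2=(n-k)+1$, together with $1\le d$ (so that $k-d=n-k+1\le n-k+d=n-(k-d)$), gives $\delta(k-d)=k-d=n-k+1$. Hence $\delta(k-d)=n-k+1>n-k=\delta(k)$, so $\binom{n}{k-d}>\binom{n}{k}$; as both are integers, $\binom{n}{k-d}\ge\binom{n}{k}+1$, i.e. $\binom{n}{k}-\binom{n}{k-d}+\binom{n}{0}\le 0$.

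The argument is entirely elementary, and I expect no serious obstacle. The only points needing care are: in (a), extracting the \emph{strict} bound $2k<n+d$ from $k<\lceil(n+d)/2\rceil$ (hence the parity case split), and checking that the reflected indices $k-d$, $n-k$, $n-k+1$ all lie in $\{0,\dots,n\}$ so that the unimodality statement applies; and in (b), the fact that the extra term $\binom{n}{0}=1$ leaves no slack, so one genuinely needs the strict inequality $\binom{n}{k-d}>\binom{n}{k}$ followed by an appeal to integrality rather than a soft estimate.
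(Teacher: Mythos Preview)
Your proof is correct and follows essentially the same route as the paper: both arguments reduce to the symmetry $\binom{n}{j}=\binom{n}{n-j}$ and the strict monotonicity of $\binom{n}{j}$ on $\{0,\dots,\lfloor n/2\rfloor\}$. The paper handles (a) by splitting into the cases $k\le n/2$ and $n/2\le k<(n+d)/2$ and reflecting in the second case, while you encode both cases uniformly via the depth function $\delta(j)=\min(j,n-j)$; the content is the same. Your treatment is slightly more careful on two points the paper leaves implicit: you flag the hidden hypothesis $1\le d\le n$ (without which (a) fails, as your example shows), and in (b) you make explicit that the strict inequality $\binom{n}{k-d}>\binom{n}{k}$ combined with integrality absorbs the extra $\binom{n}{0}=1$.
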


\begin{proof}
Suppose $k$ is a non-negative integer number such that $k < \lceil(n+d)/2 \rceil$. Note that $\lceil (n+d)/2 \rceil=(n+d)/2$ or $\lceil (n+d)/2 \rceil=(n+d+1)/2$. In any case since $k$ is integer we have that $k<(n+d)/2$. Also note that 
$$\binom{n}{k}-\binom{n}{k-d}>0$$
if $k\leq n/2$. Now suppose that $n/2 \leq k <(n+d)/2$. Then $k-d<n-k\leq n/2$ so
$$\binom{n}{k}-\binom{n}{k-d}=\binom{n}{n-k}-\binom{n}{k-d}>0$$
proving (a).
Now let us suppose that $\lceil (n+d)/2\rceil=(n+d+1)/2=k$. Let us prove that
$$\binom{n}{k}-\binom{n}{k-d}+\binom{n}{0}\leq 0.$$
Since $(n+d+1)/2=k$ then $(n+d)/2<k$, thus $n-k<k-d$. Also, since $k= (n+d+1)/2 \leq (n+2d)/2$ then $k-d\leq n/2$. Therefore,
$$\binom{n}{k}-\binom{n}{k-d}=\binom{n}{n-k}-\binom{n}{k-d}<0$$
so
$$\binom{n}{k}-\binom{n}{k-d}+\binom{n}{0}\leq 0.$$
\end{proof}

\begin{thm} \label{sertrun1}
Let $n$, and $d$ be two natural numbers.
If $n<3d$ then 
\begin{equation*}
\left[\frac{(1+z)^n}{1+z^d}\right]=\sum_{k=0}^{D-1}\left[\binom{n}{k}-\binom{n}{k-d}\right]z^k
\end{equation*}
where $D=\lceil (n+d)/2\rceil$.
\end{thm}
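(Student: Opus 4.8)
Recall that $[\,\cdot\,]$ denotes truncation at the \emph{index}, the smallest exponent $t$ for which the coefficient is $\le 0$; so the statement packages two assertions: that $\ind\!\bigl(\tfrac{(1+z)^n}{1+z^d}\bigr)=D$ with $D=\lceil(n+d)/2\rceil$, and that the coefficients in degrees $0,\dots,D-1$ equal $\binom{n}{k}-\binom{n}{k-d}$. The plan is to prove both at once, starting from the explicit formula $\gamma(n,k,d)=\sum_{j=0}^{\lfloor k/d\rfloor}(-1)^j\binom{n}{k-jd}$ of \eqref{eq:eq1}. (If $d>n$ then $(1+z)^n$ has degree below $d$ and both assertions follow by inspection, so I will assume $d\le n$, which is also the range in which Lemma \ref{lemma bi} is intended to apply.)

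The one structural observation is that $n<3d$ forces $D=\lceil(n+d)/2\rceil\le 2d$. Consequently, for every $k$ with $0\le k<D$ we have $k<2d$, hence $\lfloor k/d\rfloor\le 1$, and the alternating sum defining $\gamma(n,k,d)$ reduces to its $j=0$ and $j=1$ terms:
$$
\gamma(n,k,d)=\binom{n}{k}-\binom{n}{k-d}\qquad(0\le k<D),
$$
with the convention $\binom{n}{k-d}=0$ for $k<d$. This already identifies all coefficients below degree $D$, and Lemma \ref{lemma bi}(a) says exactly that each of them is strictly positive; in particular $\ind\!\bigl(\tfrac{(1+z)^n}{1+z^d}\bigr)\ge D$.

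For the opposite inequality I would compute $\gamma(n,D,d)$ and show $\gamma(n,D,d)\le 0$, splitting on the parity of $n+d$. If $n+d$ is even then $D=(n+d)/2<2d$, so $\gamma(n,D,d)=\binom{n}{D}-\binom{n}{D-d}$; since $n-D=(n-d)/2=D-d$, the symmetry $\binom{n}{D}=\binom{n}{n-D}=\binom{n}{D-d}$ gives $\gamma(n,D,d)=0$. If $n+d$ is odd then $D=(n+d+1)/2=\lceil(n+d)/2\rceil$, so Lemma \ref{lemma bi}(b) yields $\binom{n}{D}-\binom{n}{D-d}+\binom{n}{0}\le 0$; and either $D<2d$, in which case $\gamma(n,D,d)=\binom{n}{D}-\binom{n}{D-d}\le-\binom{n}{0}<0$, or $D=2d$, which forces $n=3d-1$ and gives $\gamma(n,D,d)=\binom{n}{2d}-\binom{n}{d}+\binom{n}{0}$, exactly the quantity bounded by Lemma \ref{lemma bi}(b). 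In all cases $\gamma(n,D,d)\le 0$.

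Combining the two inequalities, $\ind\!\bigl(\tfrac{(1+z)^n}{1+z^d}\bigr)=D$, so
$$
\left[\frac{(1+z)^n}{1+z^d}\right]=\left[\frac{(1+z)^n}{1+z^d}\right]_D=\sum_{k=0}^{D-1}\gamma(n,k,d)\,z^k=\sum_{k=0}^{D-1}\Bigl(\binom{n}{k}-\binom{n}{k-d}\Bigr)z^k,
$$
which is the claim. I expect the boundary term to be the only delicate point: one has to keep careful track of when $D$ actually reaches $2d$, so that the $j=2$ summand $\binom{n}{D-2d}=\binom{n}{0}$ genuinely appears in $\gamma(n,D,d)$ and matches the extra $\binom{n}{0}$ in Lemma \ref{lemma bi}(b), and one must be mindful of the convention $\binom{n}{j}=0$ for $j<0$ or $j>n$ when invoking $\binom{n}{D}=\binom{n}{n-D}$. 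Everything else reduces directly to the two inequalities of Lemma \ref{lemma bi}.
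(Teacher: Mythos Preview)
Your argument is correct and essentially identical to the paper's: both use $n<3d\Rightarrow D\le 2d$ to reduce $\gamma(n,k,d)$ to $\binom{n}{k}-\binom{n}{k-d}$ for $k<D$, apply Lemma~\ref{lemma bi}(a) for positivity there, and then split on the parity of $n+d$ at $k=D$, using binomial symmetry in the even case and Lemma~\ref{lemma bi}(b) in the odd case. Your odd-case treatment is slightly more explicit in separating $D<2d$ from $D=2d$, while the paper bundles both into the single inequality $\gamma(n,D,d)\le\binom{n}{D}-\binom{n}{D-d}+\binom{n}{0}$; the content is the same. One small caveat: your aside that for $d>n$ ``both assertions follow by inspection'' is not quite right (for $d\ge n+3$ the actual index is $n+1<D$), but the paper tacitly works under $d\le n$ throughout, so this does not affect the main argument.
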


\begin{proof}
Since
\begin{equation*}
\frac{(1+z)^n}{1+z^d}=\sum_{k=0}^\infty\gamma(n,k,d)z^k
\end{equation*}
we want to show that $\gamma(n,k,d)>0$ if $k< \lceil (n+d)/2\rceil$   and that  $\gamma(n,k,d)\leq 0$ when $k=\lceil (n+d)/2\rceil$.
Since $n<3d$ then $(n+d)/2<2d$, thus $\lceil (n+d)/2\rceil\leq 2d$. Let $k$ be a non-negative integer such that $k<\lceil (n+d)/2\rceil\leq 2d$. By Lemma \ref{lemma bi} we have that
$$\gamma(n,k,d)=\binom{n}{k}-\binom{n}{k-d} >0.$$
Suppose now that $k=\lceil (n+d)/2\rceil$. If $n+d$ is even, then $k=(n+d)/2< (3d+d)/2 =2d$ and $n-k=k-d$. So
$$
\gamma(n,k,d)=\binom{n}{k}-\binom{n}{k-d}=\binom{n}{k}-\binom{n}{n-k}=0.
$$
If $n+d$ is odd then $k=(n+d+1)/2$ and  $k \leq 2d$. Hence
$$
\gamma(n,k,d)\leq \binom{n}{k}-\binom{n}{k-d}+\binom{n}{0} \leq 0
$$
 by Lemma \ref{lemma bi}. The result is proved.
\end{proof}

\begin{lem}
Let $d\geq 2$. Then
$$\frac{(3d)!}{(2d+1)!(d+1)!}\geq 1$$
\end{lem}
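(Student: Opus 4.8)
The plan is to set $f(d) = \dfrac{(3d)!}{(2d+1)!\,(d+1)!}$ and prove $f(d) \geq 1$ for all $d \geq 2$ by induction on $d$, exploiting the fact that $f$ is non-decreasing past the base case. For the base case $d=2$ a direct computation gives $f(2) = \dfrac{6!}{5!\,3!} = \dfrac{720}{720} = 1$, so the inequality holds (with equality).

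For the inductive step I would pass to the ratio of consecutive terms and cancel factorials:
$$\frac{f(d+1)}{f(d)} = \frac{(3d+3)!}{(3d)!}\cdot\frac{(2d+1)!}{(2d+3)!}\cdot\frac{(d+1)!}{(d+2)!} = \frac{(3d+1)(3d+2)(3d+3)}{(2d+2)(2d+3)(d+2)}.$$
Then I would compare the numerator and denominator factor by factor: for $d \geq 1$ one has $3d+1 \geq 2d+2$, $3d+2 \geq 2d+3$, and $3d+3 \geq d+2$ (the last being trivial since $2d \geq -1$). Hence each factor of the numerator dominates the corresponding factor of the denominator, so $f(d+1)/f(d) \geq 1$, i.e. $f(d+1) \geq f(d)$. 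Combining with the base case, $f$ is non-decreasing on the integers $\geq 2$, whence $f(d) \geq f(2) = 1$ for all $d \geq 2$.

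There is no substantive obstacle here; the statement is essentially a one-line estimate. The only point that requires a little care is that equality holds at $d=2$, so the induction must be anchored there and cannot be based on a strict comparison at a smaller value. (An equivalent route is to rewrite the claim as $\binom{3d}{d} \geq (d+1)(2d+1)$ and estimate the binomial coefficient directly, but the telescoping argument above is cleaner and avoids any case analysis.)
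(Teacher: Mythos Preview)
Your proof is correct and follows essentially the same approach as the paper: induction anchored at $d=2$ with $f(2)=1$, and showing the ratio $f(d+1)/f(d)=\dfrac{(3d+1)(3d+2)(3d+3)}{(2d+2)(2d+3)(d+2)}\geq 1$. The only cosmetic difference is that the paper verifies this last inequality by expanding to the equivalent polynomial inequality $23d^3+36d^2+7d-6\geq 0$, whereas your factor-by-factor comparison is a bit cleaner.
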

\begin{proof}
For $d=2$ we have that
$$\frac{(3d)!}{(2d+1)!(d+1)!}=\frac{6!}{5!3!}=1$$
Suppose the result is true for $d$ let us prove it for $d+1$. By induction we have that
\begin{align*}
\frac{(3(d+1))!}{(2(d+1)+1)!(d+2)!}&=\frac{(3d)!}{(2d+1)!(d+1)!}\frac{(3d+1)(3d+2)(3d+3)}{(2d+2)(2d+3)(d+2)}\\
&\geq \frac{(3d+1)(3d+2)(3d+3)}{(2d+2)(2d+3)(d+2)}
\end{align*}
To show that 
$$\frac{(3d+1)(3d+2)(3d+3)}{(2d+2)(2d+3)(d+2)}\geq 1$$
is equivalent to show that 
$$23d^3+36d^2+7d-6\geq 0.$$
The last inequality is true since for $d\geq 2$ we have that $23d^3+36d^2+7d\geq 6$.
\end{proof}

\begin{lem}\label{lem bin}
Let $d\geq 2$. Then
$$\binom{3d}{2d+1}-\binom{3d}{d+1}+\binom{3d}{1}<0$$
\end{lem}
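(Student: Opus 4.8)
The plan is to collapse the three-term expression to an inequality on the single binomial coefficient $\binom{3d}{d+1}$ and then feed in the preceding lemma. First I would use the symmetry $\binom{3d}{k}=\binom{3d}{3d-k}$ to replace $\binom{3d}{2d+1}$ by $\binom{3d}{d-1}$, turning the claim $\binom{3d}{2d+1}-\binom{3d}{d+1}+\binom{3d}{1}<0$ into
\[
\binom{3d}{d+1}-\binom{3d}{d-1}>3d .
\]

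Next I would express $\binom{3d}{d-1}$ as a multiple of $\binom{3d}{d+1}$ by tracking the ratio of consecutive binomial coefficients: one has
\[
\binom{3d}{d-1}=\binom{3d}{d+1}\cdot\frac{(d+1)!\,(2d-1)!}{(d-1)!\,(2d+1)!}=\frac{d+1}{2(2d+1)}\binom{3d}{d+1},
\]
so that $\binom{3d}{d+1}-\binom{3d}{d-1}=\dfrac{3d+1}{2(2d+1)}\binom{3d}{d+1}$. Hence it suffices to prove
\[
\binom{3d}{d+1}>\frac{6d(2d+1)}{3d+1}.
\]

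Finally I would invoke the previous lemma. Since $\binom{3d}{d+1}=\dfrac{(3d)!}{(d+1)!(2d-1)!}=2d(2d+1)\cdot\dfrac{(3d)!}{(2d+1)!(d+1)!}$, the bound $\dfrac{(3d)!}{(2d+1)!(d+1)!}\ge 1$ gives $\binom{3d}{d+1}\ge 2d(2d+1)$. It then remains to verify $2d(2d+1)>\dfrac{6d(2d+1)}{3d+1}$, which after cancelling the positive factor $2d(2d+1)$ is simply $3d+1>3$, valid for all $d\ge 2$; this completes the argument.

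I do not foresee a genuine obstacle: the whole proof is a short chain of binomial identities, and the only points requiring care are computing the ratio $\binom{3d}{d-1}/\binom{3d}{d+1}$ correctly and matching the previous lemma to $\binom{3d}{d+1}$ rather than to one of the other coefficients. It is worth noting that strictness is automatic as soon as $d\ge 2$ (already $3d+1=7>3$ at $d=2$), so no boundary case needs separate treatment.
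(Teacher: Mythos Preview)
Your proof is correct and takes essentially the same approach as the paper: both factor out the quantity $\dfrac{(3d)!}{(2d+1)!(d+1)!}$ from the difference of the two large binomials and then apply the preceding lemma. The paper reaches that point by a direct common-denominator computation giving $\dfrac{(3d)!}{(2d+1)!(d+1)!}(-3d^2-d)+3d\leq -3d^2+2d<0$, whereas you route through the symmetry $\binom{3d}{2d+1}=\binom{3d}{d-1}$ and the ratio $\binom{3d}{d-1}/\binom{3d}{d+1}$ first; the underlying content is identical.
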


\begin{proof}
Since $d\geq2$, by above lemma we have
\begin{align*}
\binom{3d}{2d+1}-\binom{3d}{d+1}+\binom{3d}{1}&=\frac{(3d)!}{(2d+1)!(d-1)!}-\frac{(3d)!}{(d+1)!(2d-1)!}+3d\\
&=\frac{(3d)!(d(d+1))}{(2d+1)!(d+1)!}-\frac{(3d)!(2d(2d+1))}{(d+1)!(2d+1)!}+3d\\
&=\frac{(3d)!}{(2d+1)!(d+1)!}(-3d^2-d)+3d\\
&\leq -3d^2-d+3d\\
&= -3d^2+2d\\
&=d(-3d+2)<0
\end{align*}
\end{proof}

\begin{thm} \label{sertrun2}
Let $n$, and $d$ be two natural numbers. 
If $n=3d$ and $d\geq 2$ then 
\begin{equation*}
\left[\frac{(1+z)^n}{1+z^d}\right]=\sum_{k=0}^{2d-1}\left[\binom{n}{k}-\binom{n}{k-d}\right]z^k+z^{2d}
\end{equation*}
\end{thm}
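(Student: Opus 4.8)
The plan is to read off the index of the power series $T(z)=(1+z)^{3d}/(1+z^d)=\sum_{k\ge 0}\gamma(3d,k,d)z^k$ directly from the expansion \eqref{eq:eq1}, by locating the first $k$ with $\gamma(3d,k,d)\le 0$. Since $n=3d$ makes $(n+d)/2=2d$ an even integer, one expects the behaviour to differ slightly from the case $n<3d$ treated in Theorem \ref{sertrun1}: here the coefficient at $k=2d$ will turn out to be exactly $1$, not $\le 0$, and that is precisely why the truncation picks up the extra term $z^{2d}$ and the bound $n\le 3d$ is sharp.

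First I would treat the range $0\le k\le 2d-1$. For such $k$ one has $\lfloor k/d\rfloor\le 1$, so $\gamma(3d,k,d)=\binom{3d}{k}-\binom{3d}{k-d}$ (with the usual convention $\binom{3d}{k-d}=0$ when $k<d$). Since $k<2d=\lceil(3d+d)/2\rceil$, Lemma \ref{lemma bi}(a) gives $\gamma(3d,k,d)>0$ for every such $k$.

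Next I would compute the two boundary coefficients. For $k=2d$ we have $\lfloor 2d/d\rfloor=2$, so $\gamma(3d,2d,d)=\binom{3d}{2d}-\binom{3d}{d}+\binom{3d}{0}=0+1=1>0$, using $\binom{3d}{2d}=\binom{3d}{d}$. For $k=2d+1$, the hypothesis $d\ge 2$ gives $2d+1<3d$ and hence $\lfloor(2d+1)/d\rfloor=2$, so $\gamma(3d,2d+1,d)=\binom{3d}{2d+1}-\binom{3d}{d+1}+\binom{3d}{1}$, which is strictly negative by Lemma \ref{lem bin}.

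Combining these, $\gamma(3d,k,d)>0$ for all $0\le k\le 2d$ while $\gamma(3d,2d+1,d)<0$, so $\ind(T(z))=2d+1$ and therefore $[T(z)]=\sum_{k=0}^{2d}\gamma(3d,k,d)z^k$; substituting the values of the coefficients computed above gives the asserted identity. There is no real obstacle left: the one genuinely delicate estimate, the sign at $k=2d+1$, is already isolated in Lemma \ref{lem bin} and its supporting factorial bound, and everything else is bookkeeping with $\lfloor k/d\rfloor$ together with the earlier lemmas. The only point requiring care is not to apply Lemma \ref{lemma bi}(a) at $k=2d$ itself, since at that index the relevant quantity is $\binom{3d}{2d}-\binom{3d}{d}+\binom{3d}{0}$ rather than $\binom{3d}{2d}-\binom{3d}{d}$.
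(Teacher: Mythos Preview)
Your proof is correct and follows essentially the same route as the paper: both use Lemma~\ref{lemma bi}(a) for $0\le k\le 2d-1$, compute $\gamma(3d,2d,d)=\binom{3d}{2d}-\binom{3d}{d}+\binom{3d}{0}=1$ directly, and invoke Lemma~\ref{lem bin} for $k=2d+1$. Your version is in fact slightly more careful, explicitly checking that $\lfloor(2d+1)/d\rfloor=2$ requires $d\ge 2$.
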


\begin{proof}
We know that 
\begin{equation*}
\frac{(1+z)^n}{1+z^d}=\sum_{k=0}^\infty\gamma(n,k,d)z^k.
\end{equation*}
Suppose $n=3d$. Then for $k<2d$, 
$$
\gamma(n,k,d)= \binom{3d}{k}-\binom{3d}{k-d} > 0
$$
by Lemma \ref{lemma bi}. Also
$$
\gamma(n,2d,d) = \binom{3d}{2d}-\binom{3d}{d} +  \binom{3d}{0} = 1
$$
and by Lemma \ref{lem bin}
$$
c_{2d+1} = \binom{3d}{2d+1}-\binom{3d}{d+1} +  \binom{3d}{1} <0.
$$
It proves the result.
\end{proof}

\begin{thm}\label{corind}
 Suppose that $\lambda$ is a semi-regular homogeneous element of degree $d>1$. Then $n \leq 3d$ and
$$
\ind(\lambda)=\begin{cases} \ceil{(n+d)/2} & \mathrm{if} \ n < 3d \\
				(n+d+2)/2=2d+1 & \mathrm{if} \ n = 3d 
\end{cases}
$$ 
\end{thm}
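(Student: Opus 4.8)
The plan is to obtain the bound $n\le 3d$ directly from Theorem~\ref{thmbound}, and then to compute $\ind(\lambda)$ by identifying it with the index of the power series $T_{(d),n}(z)=(1+z)^n/(1+z^d)$ and invoking the explicit truncation formulas of Theorems~\ref{sertrun1} and~\ref{sertrun2}.

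First, since $\lambda$ is semi-regular of degree $d\ge 2$, Theorem~\ref{thmbound} (which asserts there are no semi-regular elements of degree $d\ge 2$ when $n>3d$) forces $n\le 3d$. Next, observe that $\ind(\lambda)=\ind(B^{(n)}/(\lambda))$ is by definition the least $t$ with $HF_{(\lambda)}(t)=0$; since the coefficients of a Hilbert series are non-negative, this coincides with $\ind\bigl(HS_{(\lambda)}(z)\bigr)$ in the power-series sense. By Theorem~\ref{bfsthm}(3), semi-regularity gives $HS_{(\lambda)}(z)=\left[T_{(d),n}(z)\right]$, and since $\left[T_{(d),n}(z)\right]=\left[T_{(d),n}(z)\right]_{\ind(T_{(d),n}(z))}$, we get $\ind(\lambda)=\ind\bigl(T_{(d),n}(z)\bigr)$. (This identity is in fact already extracted inside the proof of Theorem~\ref{bfsthm}(3).)

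It then remains to read off $\ind\bigl((1+z)^n/(1+z^d)\bigr)$ from the truncation formulas. If $n<3d$, Theorem~\ref{sertrun1} writes $\left[T_{(d),n}(z)\right]=\sum_{k=0}^{D-1}\left[\binom{n}{k}-\binom{n}{k-d}\right]z^k$ with $D=\ceil{(n+d)/2}$, where all displayed coefficients are positive while $\gamma(n,D,d)\le 0$; hence $\ind\bigl(T_{(d),n}(z)\bigr)=D=\ceil{(n+d)/2}$. If $n=3d$, then $d\ge 2$ because $d>1$, and Theorem~\ref{sertrun2} writes $\left[T_{(d),n}(z)\right]=\sum_{k=0}^{2d-1}\left[\binom{n}{k}-\binom{n}{k-d}\right]z^k+z^{2d}$, a polynomial of degree $2d$ with nonzero leading coefficient, while the coefficient of $z^{2d+1}$ in $T_{(d),n}(z)$ is negative by Lemma~\ref{lem bin}; hence $\ind\bigl(T_{(d),n}(z)\bigr)=2d+1=(n+d+2)/2$. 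Combining the two cases gives the claimed formula.

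There is no serious obstacle here: the statement is essentially an assembly of Theorems~\ref{thmbound}, \ref{bfsthm}, \ref{sertrun1}, and~\ref{sertrun2}. The only point needing a little care is the chain of equalities $\ind(\lambda)=\ind\bigl(HS_{(\lambda)}(z)\bigr)=\ind\bigl(T_{(d),n}(z)\bigr)$, which relies on the non-negativity of Hilbert function values together with the fact that Theorem~\ref{bfsthm}(3) equates $HS_{(\lambda)}(z)$ with the already-truncated series $\left[T_{(d),n}(z)\right]$ rather than with $T_{(d),n}(z)$ itself.
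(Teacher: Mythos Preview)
Your proof is correct and follows essentially the same route as the paper's own argument: invoke Theorem~\ref{bfsthm}(3) to identify $\ind(\lambda)$ with $\ind\bigl((1+z)^n/(1+z^d)\bigr)$, and then read off that index from Theorems~\ref{sertrun1} and~\ref{sertrun2} (with Theorem~\ref{thmbound} supplying $n\le 3d$). Your write-up is in fact more careful than the paper's, spelling out the chain $\ind(\lambda)=\ind\bigl(HS_{(\lambda)}(z)\bigr)=\ind\bigl([T_{(d),n}(z)]\bigr)=\ind\bigl(T_{(d),n}(z)\bigr)$ that the paper leaves implicit.
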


\begin{proof}
If $\lambda$ is semi-regular, then by Theorem \ref{bfsthm}
$$HS_{(\lambda)}(z)=\left[\frac{(1+z)^n}{1+z^d}\right]$$
thus $\ind (\lambda)=\ind (1+z)^n/(1+z^d)$ so the result follows from Theorem \ref{sertrun1} and Theorem \ref{sertrun2}.
\end{proof}

To finish this section, we present a theorem that will be used in the following section where we prove some results about semi-regularity of the elementary symmetric polynomials.
\begin{lem}\label{lem1}
Let $\lambda \in B^{(n)}$ be a homogeneous element of degree $d$. Suppose that for $k < n$ the map 
$$B^{(n)}_{k}\xrightarrow{\lambda}B^{(n)}_{k+d}$$ 
multiplication by $\lambda$, is injective. Then the map
$$B^{(n)}_{k-1}\xrightarrow{\lambda}B^{(n)}_{k-1+d}$$ is injective.
\end{lem}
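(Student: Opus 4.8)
The plan is to prove the contrapositive, tracking kernels: I will show that if $\ker\bigl(B^{(n)}_{k-1}\xrightarrow{\lambda}B^{(n)}_{k-1+d}\bigr)\neq 0$, then $\ker\bigl(B^{(n)}_{k}\xrightarrow{\lambda}B^{(n)}_{k+d}\bigr)\neq 0$, contradicting the hypothesis. The mechanism is to take a nonzero $u$ with $\lambda u=0$ in degree $k-1$ and multiply it by a well-chosen variable $x_i$ to produce a nonzero annihilating element in degree $k$: since $\lambda(x_iu)=x_i(\lambda u)=0$ holds automatically, the only thing to arrange is $x_iu\neq 0$.

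For this I would first isolate the elementary fact that $B^{(n)}$ has no "socle" below its top degree: \emph{if $u$ is a nonzero homogeneous element of $B^{(n)}$ of degree $e<n$, then $x_iu\neq 0$ for some $i$.} I would prove this directly from the definition of support. Write $u=\sum_{m\in\Supp(u)}m$. For a fixed $i$, the monomials in $\{\,x_im : m\in\Supp(u),\ x_i\notin\var(m)\,\}$ are pairwise distinct (dividing by $x_i$ recovers $m$) and nonzero, and this set is exactly $\Supp(x_iu)$; hence $x_iu=0$ forces $x_i\in\var(m)$ for every $m\in\Supp(u)$. If $x_iu=0$ held for all $i$, then every monomial of $\Supp(u)$ would be divisible by all $n$ variables, i.e.\ would equal $x_1\cdots x_n$, forcing $\Supp(u)=\{x_1\cdots x_n\}$ and $\deg u=n$, against $e<n$. (Conceptually this just records that the Gorenstein Artinian algebra $B^{(n)}$ has one-dimensional socle concentrated in degree $n$.)

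Granting this, the proof concludes quickly. Suppose $0\neq u\in B^{(n)}_{k-1}$ with $\lambda u=0$ (if $k\leq 0$ the source $B^{(n)}_{k-1}$ is zero and the statement is vacuous, so I may assume $k\geq 1$). Since $k<n$, we have $\deg u=k-1<n$, so by the fact above there is an $i$ with $x_iu\neq 0$; then $x_iu\in B^{(n)}_{k}$ is a nonzero element with $\lambda(x_iu)=x_i(\lambda u)=0$, contradicting injectivity of $B^{(n)}_{k}\xrightarrow{\lambda}B^{(n)}_{k+d}$. I do not anticipate a genuine obstacle here: the only substantive point is recognizing that multiplication by a variable is the right lifting operation and that it is injective on homogeneous elements of degree $<n$; the rest is the bookkeeping with supports and the trivial boundary case $k-1<0$.
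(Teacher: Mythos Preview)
Your argument is correct and follows essentially the same route as the paper: both prove the contrapositive by taking a nonzero $u\in B^{(n)}_{k-1}$ with $\lambda u=0$, choosing a variable $x_i$ with $x_iu\neq 0$ (possible since $\deg u=k-1<n$), and observing that $x_iu\in B^{(n)}_k$ lies in the kernel of multiplication by $\lambda$. The paper phrases the key step as ``there exists $x_i$ such that $x_i\nmid\alpha$'' without further justification, whereas you spell out the support argument explicitly; but the content is the same.
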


\begin{proof}
Suppose that for $k<n$ the map 
$$B^{(n)}_{k}\xrightarrow{\lambda}B^{(n)}_{k+d}$$ 
is injective. Suppose that there exists $\alpha\neq 0$ such that $\alpha\in B^{(n)}_{k-1}$ such that $\alpha \lambda=0$. Since $k<n$ we have that there exits $x_i$ such that $x_i \nmid \alpha$. So $x_i\alpha\neq 0$ and $x_i\alpha \in B^{(n)}_{k}$ satisfies that $x_i\alpha \lambda=0$, which is a contradiction.
\end{proof}

\begin{thm}\label{th2}
Let $\lambda \in B^{(n)}$ be a homogeneous element of degree $d$. Suppose $d=\deg(\lambda)\geq n/3$.
\begin{enumerate}[(a)]
\item
If $d>n/3$ then $\lambda$ is semi-regular if and only if for $D=\lceil (n+d)/2\rceil$ the map 
$$B^{(n)}_{D-1-d}\xrightarrow{\lambda}B^{(n)}_{D-1}$$ 
is injective, and the map
$$B^{(n)}_{D-d}\xrightarrow{\lambda}B^{(n)}_D$$
is surjective.
\item
If $d=n/3$ then $\lambda$ is semi-regular if and only if the maps 
$$B^{(n)}_{d-1}\xrightarrow{\lambda}B^{(n)}_{2d-1}$$ 
and
$$\frac{B^{(n)}_d}{\lambda B^{(n)}_0} \xrightarrow{\lambda}B^{(n)}_{2d}$$ 
are injective and the map
$$B^{(n)}_{d+1}\xrightarrow{\lambda}B^{(n)}_{2d+1}$$
is surjective.
\end{enumerate}
\end{thm}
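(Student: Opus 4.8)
The plan is to translate semi-regularity of $\lambda$ via Theorem~\ref{bfsthm}(3) into the equality $HS_{(\lambda)}(z)=\left[(1+z)^n/(1+z^d)\right]$, make the right-hand side explicit with Theorems~\ref{sertrun1} and \ref{sertrun2}, and then read off exactly which multiplication maps $B^{(n)}_j\xrightarrow{\lambda}B^{(n)}_{j+d}$ are forced to be injective or surjective. The workhorse is rank--nullity: since $(B^{(n)}/(\lambda))_k=B^{(n)}_k/\lambda B^{(n)}_{k-d}$, writing $k_\lambda(j)=\dim\Ker(B^{(n)}_j\xrightarrow{\lambda}B^{(n)}_{j+d})$ we have
$$\dim(B^{(n)}/(\lambda))_k=\binom{n}{k}-\binom{n}{k-d}+k_\lambda(k-d)=\dim B^{(n)}_k-\dim\lambda B^{(n)}_{k-d}.$$
So $\dim(B^{(n)}/(\lambda))_k$ attains its minimum possible value $\binom{n}{k}-\binom{n}{k-d}$ exactly when $B^{(n)}_{k-d}\xrightarrow{\lambda}B^{(n)}_k$ is injective, and it vanishes exactly when that map is surjective.

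Next I would record two monotonicity facts, which are what allow finitely many conditions to control all degrees. Lemma~\ref{lem1} says injectivity of $B^{(n)}_j\xrightarrow{\lambda}B^{(n)}_{j+d}$ propagates \emph{downward} in $j$ (for $j<n$). Dually, surjectivity propagates \emph{upward}: if $\lambda B^{(n)}_j=B^{(n)}_{j+d}$ then, since $B^{(n)}$ is strongly graded and hence $B^{(n)}_{j+1}=\sum_i x_iB^{(n)}_j$, one gets $\lambda B^{(n)}_{j+1}=\sum_i x_i\lambda B^{(n)}_j=\sum_i x_iB^{(n)}_{j+d}=B^{(n)}_{j+d+1}$. (Equivalently, this is the Frobenius/Gorenstein duality of $B^{(n)}$, under which $B^{(n)}_j\xrightarrow{\lambda}B^{(n)}_{j+d}$ is the transpose of $B^{(n)}_{n-j-d}\xrightarrow{\lambda}B^{(n)}_{n-j}$.)

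For part (a), set $D=\lceil(n+d)/2\rceil$; by Theorem~\ref{sertrun1}, $\left[(1+z)^n/(1+z^d)\right]=\sum_{k=0}^{D-1}\bigl(\binom{n}{k}-\binom{n}{k-d}\bigr)z^k$ with every coefficient positive. If $\lambda$ is semi-regular, comparing the coefficient at $k=D-1$ forces $k_\lambda(D-1-d)=0$ and the coefficient at $k=D$ forces $\lambda B^{(n)}_{D-d}=B^{(n)}_D$; these are precisely the two stated conditions. Conversely, injectivity at $D-1-d$ propagates down and so makes $\dim(B^{(n)}/(\lambda))_k=\binom{n}{k}-\binom{n}{k-d}$ for all $k<D$, while surjectivity at $D-d$ propagates up and makes $\dim(B^{(n)}/(\lambda))_k=0$ for all $k\ge D$; this reproduces $\left[(1+z)^n/(1+z^d)\right]$ term by term, so $\lambda$ is semi-regular by Theorem~\ref{bfsthm}(3). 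Part (b) ($n=3d$) is the same argument with the truncation $\sum_{k=0}^{2d-1}\bigl(\binom{n}{k}-\binom{n}{k-d}\bigr)z^k+z^{2d}$ from Theorem~\ref{sertrun2}: injectivity at $k-d=d-1$ controls the range $k\le 2d-1$ and surjectivity at $k-d=d+1$ controls $k\ge 2d+1$. The single new feature is the coefficient $z^{2d}$: there $\binom{3d}{2d}-\binom{3d}{d}=0$, so matching the coefficient $1$ forces $k_\lambda(d)=1$; since $\lambda B^{(n)}_0\subseteq\Ker(B^{(n)}_d\xrightarrow{\lambda}B^{(n)}_{2d})$ is one-dimensional, $k_\lambda(d)=1$ is exactly the statement that $B^{(n)}_d/\lambda B^{(n)}_0\xrightarrow{\lambda}B^{(n)}_{2d}$ is injective.

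The routine part is the rank--nullity bookkeeping together with checking that the degrees at which propagation is invoked are legal (here $n<3d$ gives $D\le 2d$, so $D-1-d\le d-1<n$, and in (b) trivially $d-1<3d$). The point that genuinely needs care --- and the only place where $d=n/3$ behaves differently from $d>n/3$ --- is the middle degree $k=2d$ in part (b): the naive count $\binom{3d}{2d}-\binom{3d}{d}$ already equals $0$ rather than the correct value $1$, so one must recognise that the missing unit is contributed by the unavoidable kernel $\mathbb{F}_2\lambda$, which is why the condition has to be phrased through the quotient $B^{(n)}_d/\lambda B^{(n)}_0$. Establishing the upward propagation of surjectivity (or, equivalently, invoking Frobenius duality for $B^{(n)}$) is the other small ingredient not already available from the earlier lemmas.
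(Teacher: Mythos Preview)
Your proposal is correct and follows essentially the same route as the paper: translate semi-regularity into the Hilbert series identity via Theorem~\ref{bfsthm}(3), use Theorems~\ref{sertrun1} and~\ref{sertrun2} to make the target series explicit, and then match coefficients using rank--nullity together with Lemma~\ref{lem1} for downward propagation of injectivity. Your argument is in fact slightly more complete than the paper's, since you make explicit the upward propagation of surjectivity (via strong grading or Frobenius duality) needed to kill all coefficients beyond the index, a step the paper uses but does not spell out.
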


\begin{proof}
Suppose $n=3d$. Suppose $\lambda$ is semi-regular. By Theorem \ref{bfsthm} and Theorem \ref{sertrun2} we have
\begin{align*}
{HS}_{(\lambda)}(z)&=\sum_{k=0}^{\infty}\dim \left(B^{(n)}_{k}/\lambda B^{(n)}_{k-d}\right)z^k \\
&=\left[\frac{(1+z)^n}{1+z^d}\right]=\sum_{k=0}^{2d-1}\left[\binom{n}{k}-\binom{n}{k-d}\right]z^k+z^{2d}.
\end{align*}
Thus 
\begin{align*}
\dim (B^{(n)}_{2d-1})-\dim(\lambda B^{(n)}_{d-1})&=\dim \left(B^{(n)}_{2d-1}/\lambda B^{(n)}_{d-1}\right)\\
&=\binom{n}{2d-1}-\binom{n}{d-1}\\
&=\dim (B^{(n)}_{2d-1})-\dim(B^{(n)}_{d-1}).
\end{align*}
So $\dim (\lambda B^{(n)}_{d-1})=\dim (B^{(n)}_{d-1})$, then we have that the map
$$B^{(n)}_{d-1}\xrightarrow{\lambda}B^{(n)}_{2d-1}$$ 
is injective. Now, since $n=3d$ note that $\dim B^{(n)}_{2d}=\dim B^{(n)}_d$ and
\begin{align*}
\dim (B^{(n)}_{2d})-\dim(\lambda B^{(n)}_{d})&=\dim (B^{(n)}_{2d}/\lambda B^{(n)}_{d})\\
&=1\\
&=\dim (B^{(n)}_{2d})-(\dim(B^{(n)}_{d})-1).
\end{align*}
So $\dim (\lambda B^{(n)}_{d})=\dim (B^{(n)}_{d})-1$, then the kernel of the map
$$B^{(n)}_{d}\xrightarrow{\lambda}B^{(n)}_{2d}$$
has dimension $1$, however $\lambda \in B^{(n)}_d$ is in this kernel. Therefore the kernel should be $\lambda B^{(n)}_0$. So, the map
$$\frac{B^{(n)}_d}{\lambda B^{(n)}_0} \xrightarrow{\lambda}B^{(n)}_{2d}$$ 
is injective. Finally, $\dim (B^{(n)}_{2d+1}/\lambda B^{(n)}_{d+1})=0$ then we have that 
$$B^{(n)}_{d+1}\xrightarrow{\lambda}B^{(n)}_{2d+1}$$
is surjective. Conversely, suppose that the maps 
$$B^{(n)}_{d-1}\xrightarrow{\lambda}B^{(n)}_{2d-1}$$ 
and
$$\frac{B^{(n)}_d}{\lambda B^{(n)}_0} \xrightarrow{\lambda}B^{(n)}_{2d}$$ 
are injective and the map
$$B^{(n)}_{d+1}\xrightarrow{\lambda}B^{(n)}_{2d+1}$$
is surjective. Since the map 
$$B^{(n)}_{d-1}\xrightarrow{\lambda}B^{(n)}_{2d-1}$$ 
is injective, then by Lemma \ref{lem1} we have that 
$$B^{(n)}_{k-d}\xrightarrow{\lambda}B^{(n)}_{k}$$ 
is injective, for all $k < 2d$. Thus, by Lemma \ref{lemma bi} we have that for all $k<2d$
\begin{align*}
\dim \left(B^{(n)}_{k}/\lambda B^{(n)}_{k-d}\right)&=\dim (B^{(n)}_{k})-\dim(\lambda B^{(n)}_{k-d})\\
&=\dim (B^{(n)}_{k})-\dim(B^{(n)}_{k-d})\\
&=\binom{n}{k}-\binom{n}{k-d}>0.
\end{align*}
Since the map
$$\frac{B^{(n)}_d}{\lambda B^{(n)}_0} \xrightarrow{\lambda}B^{(n)}_{2d}$$ 
is injective and $\dim (B^{(n)}_{2d})=\dim(B^{(n)}_d)$ then $\dim (B^{(n)}_{2d}/\lambda B^{(n)}_d)=1$. Finally, since the map
$$B^{(n)}_{d+1}\xrightarrow{\lambda}B^{(n)}_{2d+1}$$
is surjective  then $\dim (B^{(n)}_{2d+1}/(\lambda B^{(n)}_{d+1}))=0$. Putting together this information we have that
 \begin{align*}
{HS}_{(\lambda)}(z)&=\sum_{k=0}^{\infty}\dim \left(B^{(n)}_{k}/\lambda B^{(n)}_{k-d}\right)z^k \\
&=\sum_{k=0}^{2d-1}\left[\binom{n}{k}-\binom{n}{k-d}\right]z^k+z^{2d}.
\end{align*}
By Theorem \ref{sertrun2} 
$$\left[\frac{(1+z)^n}{1+z^d}\right]= \sum_{k=0}^{2d-1}\left[\binom{n}{k}-\binom{n}{k-d}\right]z^k+z^{2d}.$$
Thus, 
$${HS}_{(\lambda)}(z)=\left[\frac{(1+z)^n}{1+z^d}\right].$$
So $\lambda$ is semi-regular. It proves (b). The proof of (a) is similar.
\end{proof}

\section{Semi-regularity of elementary symmetric polynomials}

Consider the ring of polynomials over $\mathbb{F}_2$ in $n$ variables, $\mathbb{F}_2[X_1,...,X_n]$. In this ring we have the elementary symmetric polynomial of degree $d$ which is defined as 
\begin{equation*}
\sigma_d(X_{1},...,X_{n})=\sum_{1\leq i_1<\cdots<i_d\leq n}X_{i_1}\cdots X_{i_d}.
\end{equation*}
We can consider the elementary symmetric polynomial of degree $d$ in $B^{(n)}$,\\* $\sigma_d(x_{1},...,x_{n})$, as the image of the symmetric polynomial $\sigma_d(X_{1},...,X_{n})$ under the evaluation map
\begin{align*}
\mathbb{F}_2[X_1,...,X_n]&\rightarrow B^{(n)}\\
X_i &\mapsto x_i
\end{align*} 
To simplify we will denote $\sigma_d(x_1,...,x_n)$ by $\sigma_{d,n}$. 

In this section we give a complete description of the semi-regularity of the elementary symmetric polynomials $\sigma_{d,n}$. First, recall the following well-known identity. 

\begin{lem}
Let $n$, $d$, and $k$ be natural numbers such that $1\leq k \leq n$ then 
\begin{align}\label{re1}
\sigma_d(x_1,...,x_n)=\sum_{i=0}^{d}\sigma_{d-i}(x_1,...,x_k)\sigma_i(x_{k+1},...,x_n)
\end{align}
\end{lem}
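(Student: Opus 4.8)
The plan is to prove the identity first in the polynomial ring $\mathbb{Z}[X_1,\ldots,X_n]$, where it is classical, and then transport it to $B^{(n)}$ via the evaluation homomorphism. The most transparent route uses a generating variable $t$. First I would recall the factorization
$$\prod_{j=1}^{n}(1+X_j t)=\Bigl(\prod_{j=1}^{k}(1+X_j t)\Bigr)\Bigl(\prod_{j=k+1}^{n}(1+X_j t)\Bigr),$$
valid in $\mathbb{Z}[X_1,\ldots,X_n][t]$ simply because the product over $\{1,\ldots,n\}$ splits along the partition into $\{1,\ldots,k\}$ and $\{k+1,\ldots,n\}$.

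Next I would expand each factor by collecting powers of $t$, using the standard fact that $\prod_{j=1}^{m}(1+Y_j t)=\sum_{e\geq 0}\sigma_e(Y_1,\ldots,Y_m)\,t^e$. Thus the left side equals $\sum_{d\geq 0}\sigma_d(X_1,\ldots,X_n)\,t^d$, while multiplying out the right side and reading off the coefficient of $t^d$ gives $\sum_{i=0}^{d}\sigma_{d-i}(X_1,\ldots,X_k)\sigma_i(X_{k+1},\ldots,X_n)$ (the terms with $d-i>k$ or $i>n-k$ being zero, so the range $0\le i\le d$ is harmless). Equating the two expressions for the coefficient of $t^d$ yields the identity in $\mathbb{Z}[X_1,\ldots,X_n]$.

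Finally I would apply the ring homomorphism $\mathbb{Z}[X_1,\ldots,X_n]\to B^{(n)}$, $X_i\mapsto x_i$, to obtain \eqref{re1}. The one point meriting a word of care is exactly this last step: every monomial occurring on either side is squarefree, since each $\sigma_e$ is multilinear in its arguments and the variable sets $\{X_1,\ldots,X_k\}$ and $\{X_{k+1},\ldots,X_n\}$ are disjoint, so no product ever produces a repeated variable. Hence reduction modulo $(X_1^2,\ldots,X_n^2)$ (and modulo $2$) does not identify any of the terms with one another, and the image of the integral identity is precisely the asserted equation in $B^{(n)}$.

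There is essentially no genuine obstacle here; the argument is routine once the bookkeeping with $t$ is set up. If one prefers to avoid the auxiliary variable altogether, the same result follows by a direct counting argument: a $d$-element subset $S\subseteq\{1,\ldots,n\}$ is uniquely determined by the pair $(S\cap\{1,\ldots,k\},\,S\cap\{k+1,\ldots,n\})$, and if $|S\cap\{k+1,\ldots,n\}|=i$ then $|S\cap\{1,\ldots,k\}|=d-i$; grouping the monomials of $\sigma_{d,n}$ according to the value of $i$ gives \eqref{re1} immediately.
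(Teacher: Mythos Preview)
Your proof is correct and follows essentially the same route as the paper: both introduce an auxiliary variable $t$, factor $\prod_{j}(1+x_jt)$ along the partition $\{1,\dots,k\}\cup\{k+1,\dots,n\}$, expand each factor as $\sum_e\sigma_e\,t^e$, and compare coefficients of $t^d$. Your additional remarks about first proving the identity over $\mathbb{Z}$ and checking that only squarefree monomials appear before reducing to $B^{(n)}$ are careful but not strictly necessary, since the paper's argument can be read directly in $B^{(n)}[t]$.
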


\begin{proof}
Note that 
\begin{align*}
\sum_{j=1}^n \sigma_j(x_1,...,x_n) t^j &=\prod_{i=1}^n (1+tx_i) \\
	&= \prod_{i=1}^k (1+tx_i) \prod_{j=1}^{n-k} (1+ tx_j)\\
	&= \sum_{i=1}^k \sigma_i(x_1,...,x_k) t^i \sum_{j=1}^{n-k} \sigma_j(x_{k+1},...,x_n) t^j 
\end{align*}
\end{proof}

\begin{lem}\label{lemmul}
$$\sigma_{a,n}\sigma_{b,n} = \overline{\binom{a+b}{a}} \sigma_{a+b,n}
$$
where $\overline{k}$ denotes the image of $k$ in $\bbF_2$.
\end{lem}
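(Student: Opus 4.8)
The plan is to compute the product directly, expanding each elementary symmetric polynomial as a sum of squarefree monomials and then collecting terms, exploiting the fact that $x_i^2=0$ in $B^{(n)}$ annihilates almost all of the cross terms. First I would fix notation: for a subset $S\subseteq\{1,\dots,n\}$ write $x_S=\prod_{i\in S}x_i\in B^{(n)}$, so that $\sigma_{d,n}=\sum_{|S|=d}x_S$. Multiplying out,
$$
\sigma_{a,n}\sigma_{b,n}=\sum_{|S|=a}\sum_{|T|=b}x_Sx_T .
$$
Since $x_i^2=0$, the monomial $x_Sx_T$ is $0$ whenever $S\cap T\neq\emptyset$, and equals $x_{S\cup T}$ when $S$ and $T$ are disjoint; in the latter case $|S\cup T|=a+b$.

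Next I would collect the contributions to a fixed squarefree monomial $x_U$ with $|U|=a+b$. Its coefficient in $\sigma_{a,n}\sigma_{b,n}$ is the number of ordered pairs $(S,T)$ with $S\sqcup T=U$ and $|S|=a$; choosing $S$ determines $T=U\setminus S$, so this count is $\binom{a+b}{a}$. Reducing modulo $2$, the coefficient is $\overline{\binom{a+b}{a}}$, whence
$$
\sigma_{a,n}\sigma_{b,n}=\sum_{|U|=a+b}\overline{\binom{a+b}{a}}\,x_U=\overline{\binom{a+b}{a}}\,\sigma_{a+b,n},
$$
as claimed. (If $a+b>n$ both sides vanish: there are no subsets of $\{1,\dots,n\}$ of size $a+b$, equivalently no disjoint pairs of the required sizes; the formula is then trivially valid.)

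There is no real obstacle here — the only thing to be careful about is the bookkeeping of which cross terms vanish and the elementary count of the ordered set partitions of $U$. One could instead try to argue from the generating-function identity $\prod_{i=1}^n(1+tx_i)=\sum_j\sigma_{j,n}t^j$, but squaring that identity over $\bbF_2$ only yields the relation $\sum_{a+b=k}\sigma_{a,n}\sigma_{b,n}=0$ for $k\geq 1$, which is weaker than what is needed; similarly an induction via the splitting identity \eqref{re1} is available but more cumbersome. Accordingly I would present the direct computation above, which is both shortest and self-contained.
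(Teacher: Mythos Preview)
Your proof is correct and takes essentially the same approach as the paper: both compute the coefficient of a fixed squarefree monomial $x_U$ of degree $a+b$ in the product by counting the $\binom{a+b}{a}$ ways to choose a degree-$a$ submonomial of $x_U$. Your write-up is simply a bit more explicit about the notation and the vanishing of terms with $S\cap T\neq\emptyset$, and includes the harmless edge case $a+b>n$.
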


\begin{proof} Let $M$ be a monomial in $x_1, \dots , x_n$ of degree $a+b$. Then $M$ will occur once in $\sigma_{a,n}\sigma_{b,n}$ for each occurrence of a sub-monomial of $M$ of degree $a$ in $\sigma_{a,n}$. There are precisely $\binom{a+b}{a}$ such sub-monomials.
\end{proof}

\begin{cor}
$$
\sigma_{1,n}\sigma_{k,n}=\begin{cases} 0 & \textrm{if } k \textrm { is odd}\\
				 \sigma_{1+k,n} & \textrm{if } k \textrm { is even}\\
\end{cases}
$$
\end{cor}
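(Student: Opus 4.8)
The corollary to be proven states that $\sigma_{1,n}\sigma_{k,n}$ equals $0$ if $k$ is odd and $\sigma_{1+k,n}$ if $k$ is even.

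The plan is to apply Lemma \ref{lemmul} directly with $a=1$ and $b=k$. By that lemma, $\sigma_{1,n}\sigma_{k,n} = \overline{\binom{1+k}{1}}\,\sigma_{1+k,n} = \overline{k+1}\,\sigma_{1+k,n}$, where $\overline{m}$ denotes the image of $m$ in $\bbF_2$. So the entire argument reduces to evaluating the parity of the integer $k+1$.

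The remaining step is the elementary observation that $k+1$ is even precisely when $k$ is odd, and $k+1$ is odd precisely when $k$ is even. Hence $\overline{k+1} = 0$ in $\bbF_2$ when $k$ is odd, giving $\sigma_{1,n}\sigma_{k,n}=0$; and $\overline{k+1}=1$ in $\bbF_2$ when $k$ is even, giving $\sigma_{1,n}\sigma_{k,n}=\sigma_{1+k,n}$. This completes the proof.

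There is essentially no obstacle here: the corollary is an immediate specialization of Lemma \ref{lemmul}, and the only content is reading off $\binom{k+1}{1} = k+1 \pmod 2$. I would write this as a two-line proof invoking the lemma and then splitting on the parity of $k$.
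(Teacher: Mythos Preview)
Your proof is correct and is exactly the intended argument: the paper states this as an immediate corollary of Lemma~\ref{lemmul} with no further proof, and your application of that lemma with $a=1$, $b=k$ together with $\binom{k+1}{1}=k+1$ and a parity check is precisely what is meant.
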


\begin{thm}\label{gamsem1} 
If $d$ is odd, then $\sigma_{d,n}$ is semi-regular if and only if $d = n-1, n$.
\end{thm}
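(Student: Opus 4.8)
The plan is to analyze the behavior of multiplication by $\sigma_{d,n}$ using the key multiplicative identity $\sigma_{d,n}\sigma_{e,n} = \overline{\binom{d+e}{d}}\,\sigma_{d+e,n}$ from Lemma \ref{lemmul}, together with the decomposition identity \eqref{re1}. The crucial observation when $d$ is odd is that $\binom{d+1}{1} = d+1$ is even, so $\sigma_{1,n}\sigma_{d,n} = 0$. This gives an immediate, very low-degree relation: taking $\mu = \sigma_{1,n}$ (of degree $1$), we have $\mu\sigma_{d,n} = 0$, while $\sigma_{1,n} \notin (\sigma_{d,n})$ provided $d > 1$ (and $n$ large enough that $\sigma_{1,n}\neq 0$, i.e. $n \geq 1$, noting $\sigma_{1,n}$ is nonzero in $B^{(n)}$ always). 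So the first fall degree of $\sigma_{d,n}$ is at most $d+1$ whenever $d$ is odd and $d \geq 3$. Comparing with $\ind(\sigma_{d,n}) > n - d$ from Proposition \ref{indlam}, we get non-semi-regularity (via Corollary \ref{indffd}) as soon as $n - d \geq d+1$, i.e. $n \geq 2d+1$.

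This handles all $n \geq 2d+1$, but leaves the range $d+2 \leq n \leq 2d$ (the cases $n = d, n-1$ being the claimed semi-regular ones, handled by Theorem \ref{trivialsem}, and $d$ odd forcing $d \geq 3$ so $d+1 < n$ is the first open case $n = d+2$... actually one must check small cases). So next I would treat the intermediate range $d < n \leq 2d$ directly. Here $d > n/3$ for $n \leq 2d < 3d$, so Theorem \ref{th2}(a) applies: $\sigma_{d,n}$ is semi-regular iff multiplication by $\sigma_{d,n}$ from $B^{(n)}_{D-1-d}$ to $B^{(n)}_{D-1}$ is injective and from $B^{(n)}_{D-d}$ to $B^{(n)}_D$ is surjective, where $D = \lceil (n+d)/2\rceil$. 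Since $d$ is odd and $n-1 \geq d+1$, consider whether $\sigma_{1,n}^{k}$ or more precisely elements of the form $\sigma_{j,n}$ for small odd $j$ land in the kernel. The relation $\sigma_{j,n}\sigma_{d,n} = \overline{\binom{d+j}{j}}\sigma_{d+j,n}$ shows $\sigma_{j,n}$ is in the kernel precisely when $\binom{d+j}{d}$ is even. With $d$ odd, by Kummer's theorem $\binom{d+1}{d}$ is even; so $\sigma_{1,n} \in \Ker$. If $\sigma_{1,n} \neq 0$ in degree $D-1-d$, i.e. $D - 1 - d = 1$, this directly breaks injectivity; otherwise one must produce a kernel element in the relevant degree, which is the delicate point.

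The main obstacle I anticipate is precisely this: showing non-injectivity of multiplication by $\sigma_{d,n}$ in the specific degree $D-1-d = \lceil(n+d)/2\rceil - 1 - d$ for the intermediate range $d+1 < n \leq 2d$. The relation $\sigma_{1,n}\sigma_{d,n} = 0$ only directly kills a degree-$1$ element, so for larger target degrees I would need to exhibit kernel elements of higher degree. A natural candidate family is $\{\sigma_{1,n}\cdot w : w \in B^{(n)}\}$, giving kernel elements $\sigma_{1,n}w$ whenever $\sigma_{1,n}w \neq 0$; since $\sigma_{1,n}w \cdot \sigma_{d,n} = w \cdot (\sigma_{1,n}\sigma_{d,n}) = 0$, every such product lies in the kernel. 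So the kernel of multiplication by $\sigma_{d,n}$ on $B^{(n)}_{k}$ contains $\sigma_{1,n}B^{(n)}_{k-1}$, which is nonzero for $1 \leq k \leq n$ (since $\sigma_{1,n}$ is a nonzero element of degree $1$ and $B^{(n)}$ has no nonzero annihilator structure below degree $n$ for $\sigma_{1,n}$ — more carefully, $\sigma_{1,n}B^{(n)}_{k-1}\neq 0$ as long as $k-1 < n$). Hence the map $B^{(n)}_{D-1-d}\xrightarrow{\sigma_{d,n}} B^{(n)}_{D-1}$ fails to be injective as soon as $1 \leq D-1-d$, i.e. $D \geq d+2$, i.e. $\lceil(n+d)/2\rceil \geq d+2$, i.e. $n \geq d+3$ (roughly). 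Combined with an explicit check of the remaining tiny cases $n = d+1$ (semi-regular, degree $n-1$) and $n = d+2$, this covers the whole odd range, and together with the first-fall-degree argument for large $n$ establishes that $\sigma_{d,n}$ ($d$ odd) is semi-regular only for $d = n-1, n$. The converse direction ($d = n-1$ or $n \Rightarrow$ semi-regular) is exactly Theorem \ref{trivialsem}.
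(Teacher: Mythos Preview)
Your proposal is correct, and the core idea---exploiting the relation $\sigma_{1,n}\sigma_{d,n}=0$ when $d$ is odd---is exactly the same one the paper uses. The route you take, however, is longer than necessary. You invoke only the weak bound $\ind(\sigma_{d,n})>n-d$ from Proposition~\ref{indlam}, which gives non--semi-regularity only for $n\geq 2d+1$, and then you handle the intermediate range $d+3\le n\le 2d$ via Theorem~\ref{th2}(a) together with the observation that $\sigma_{1,n}B^{(n)}_{k-1}$ sits in the kernel of multiplication by $\sigma_{d,n}$. This works, and your remaining case $n=d+2$ is disposed of exactly as in the paper. The paper's proof is shorter because it uses the sharp index formula available \emph{under the semi-regularity hypothesis}: by Corollary~\ref{corind} (equivalently Theorems~\ref{sertrun1} and~\ref{sertrun2}), if $\sigma_{d,n}$ is semi-regular then $\ind(\sigma_{d,n})\ge\lceil(n+d)/2\rceil$. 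Combining this with $\ffd(\sigma_{d,n})\le d+1$ and Corollary~\ref{indffd} immediately forces $\lceil(n+d)/2\rceil\le d+1$, i.e.\ $n\le d+2$, so the intermediate range $d+3\le n\le 2d$ never arises and only the single case $n=d+2$ remains. In short: you reproved a consequence of Corollary~\ref{corind} by hand; citing it would collapse your middle step entirely.
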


\begin{proof} Suppose that $\sigma_{d,n}$ is semi-regular. By above corollary, the first fall degree of $\sigma_{d,n}$ is less than or equal to $d+1$. On the other hand, we know that 
$$
\ind(\sigma_{d,n}) \geq \left\lceil \frac{n+d}{2}\right\rceil
.$$
Since $\sigma_{d,n}$ is semi-regular by Corollary \ref{indffd} we have that $\ind (\sigma_{d,n}) \leq \ffd (\sigma_{d,n})$, thus $\lceil(n+d)/2\rceil \leq d+1$. This implies that $n+d \leq 2d+2$ from which we obtain $d \geq n-2$. If $d = n-2$, then $\ind (\sigma_{d,n})=\left\lceil (n+d)/2\right\rceil = n-1$. Thus, $\dim(B^{(n)}_{n-1}/\sigma_{d,n}B^{(n)}_1)=0$ then multiplication by $\sigma_{d,n}$ would be an surjective map from $B^{(n)}_1$ to $B^{(n)}_{n-1}$; however, $\dim B^{(n)}_{n-1}=\dim B^{(n)}_1=n$, then multiplication by $\sigma_{d,n}$ would be an injective map from $B^{(n)}_1$ to $B^{(n)}_{n-1}$; but $\sigma_{1,n} \sigma_{d,n} =0$ by above corollary so this does not happen. Hence we must have that $d=n-1$ or $n$. Conversely, any element $\lambda \in B^{(n)}_n \cup B^{(n)}_{n-1}$ is semi-regular as we show in Theorem \ref{trivialsem}.
\end{proof}

\begin{lem}\label{leminj}
Suppose that for $k<n$ the map 
$$B^{(n)}_{k}\xrightarrow{\sigma_{d,n}}B^{(n)}_{k+d}$$ 
is injective. Then the map
$$B^{(n+1)}_{k}\xrightarrow{\sigma_{d,n+1}}B^{(n+1)}_{k+d}$$ is injective.
\end{lem}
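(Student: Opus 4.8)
The plan is to relate multiplication by $\sigma_{d,n+1}$ on $B^{(n+1)}$ to multiplication by $\sigma_{d,n}$ on $B^{(n)}$ using the splitting identity \eqref{re1} with $k=n$ (separating off the last variable $x_{n+1}$). Writing a general element $\alpha \in B^{(n+1)}_k$ in the form $\alpha = \alpha_0 + x_{n+1}\alpha_1$ with $\alpha_0 \in B^{(n)}_k$ and $\alpha_1 \in B^{(n)}_{k-1}$, and using $\sigma_{d,n+1} = \sigma_{d,n} + x_{n+1}\sigma_{d-1,n}$ (the $i=0$ and $i=1$ terms of \eqref{re1}, since $x_{n+1}^2=0$ kills the rest), one computes
$$
\sigma_{d,n+1}\alpha = \sigma_{d,n}\alpha_0 + x_{n+1}\bigl(\sigma_{d-1,n}\alpha_0 + \sigma_{d,n}\alpha_1\bigr).
$$
If this is zero, then comparing the parts with and without $x_{n+1}$ forces $\sigma_{d,n}\alpha_0 = 0$ and $\sigma_{d-1,n}\alpha_0 = \sigma_{d,n}\alpha_1$ in $B^{(n)}$.

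The first equation together with the hypothesis that $B^{(n)}_k \xrightarrow{\sigma_{d,n}} B^{(n)}_{k+d}$ is injective gives $\alpha_0 = 0$ immediately, provided $k<n$ so the hypothesis applies. Then the second equation becomes $\sigma_{d,n}\alpha_1 = 0$ with $\alpha_1 \in B^{(n)}_{k-1}$. Now I would invoke Lemma \ref{lem1} (in its form for the symmetric polynomial $\sigma_{d,n}$; alternatively the same one-line argument: if $k-1<n$ then multiply $\alpha_1$ by a variable not dividing it): injectivity of multiplication by $\sigma_{d,n}$ in degree $k$ implies injectivity in degree $k-1$, so $\alpha_1=0$ as well. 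Hence $\alpha = 0$, which is exactly the injectivity of $B^{(n+1)}_k \xrightarrow{\sigma_{d,n+1}} B^{(n+1)}_{k+d}$.

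The only genuinely delicate point is bookkeeping on the range of $k$: the hypothesis $k<n$ guarantees $\alpha_0=0$, but to run the descent argument on $\alpha_1$ in degree $k-1$ I need either Lemma \ref{lem1} (which requires $k<n$, already assumed) or the direct observation that $k-1<n$. Both are available, so no real obstacle arises there. I should also double check the degenerate case $k=0$ (where $\alpha_1$ lives in degree $-1$, i.e. is automatically zero) so that the statement is vacuously fine. Everything else is the routine expansion above, so the proof is short; the substantive content is the identity $\sigma_{d,n+1}=\sigma_{d,n}+x_{n+1}\sigma_{d-1,n}$ and the prior injectivity-descent lemma.
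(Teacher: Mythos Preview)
Your proof is correct and matches the paper's argument essentially line for line: the paper also writes $\sigma_{d,n+1}=\sigma_{d,n}+x_{n+1}\sigma_{d-1,n}$, decomposes $\alpha=\alpha'+x_{n+1}\beta$, reads off $\alpha'\sigma_{d,n}=0$ and then $\beta\sigma_{d,n}=0$, and finishes with Lemma~\ref{lem1}. The only differences are notational and your added remarks on the $k=0$ case and the degree bookkeeping, which are fine.
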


\begin{proof}
Suppose that the map 
$$B^{(n)}_{k}\xrightarrow{\sigma_{d,n}}B^{(n)}_{k+d}$$ 
is injective. Suppose that there exists $\alpha\in B^{(n+1)}_{k}$ such that $\alpha \sigma_{d,n+1}=0$. By equation $(\ref{re1})$ we have that $\sigma_{d,n+1}=\sigma_{d,n}+x_{n+1}\sigma_{d-1,n}$, also notice that $\alpha=\alpha'+x_{n+1}\beta$ with $\alpha' \in B^{(n)}_{k}$ and $\beta \in B^{(n)}_{k-1}$. Thus $0=\alpha \sigma_{d,n+1}=\alpha' \sigma_{d,n}+x_{n+1}(\alpha'\sigma_{d-1,n}+\beta \sigma_{d,n})$. Since $\alpha' \sigma_{d,n}\in B^{(n)}$ we have that $\alpha' \sigma_{d,n}=0$ so by injectivity $\alpha'=0$. Thus, $0=x_{n+1}(\beta \sigma_{d,n})$, but $\beta \sigma_{d,n}\in B^{(n)}$, so $\beta \sigma_{d,n}=0$. Therefore by Lemma $\ref{lem1}$ we have that $\beta=0$. So $\alpha=0$.
\end{proof}

\begin{thm}\label{th1}
Let $n$ and $k$ be non-negative integers. Then \[
\overline{\binom{n}{k}}=
 \begin{cases}
     \hfill 0 \hfill & \text{if $n$ is even and $k$ is odd}\\
     \hfill \overline{\binom{\lfloor n/2\rfloor} {\lfloor k/2 \rfloor}} \hfill & \text{otherwise}

\end{cases}
\]
where $\overline{a}$ denotes the image of $a$ in $\mathbb{F}_2$.
\end{thm}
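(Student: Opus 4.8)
The plan is to prove this as an instance of Lucas' theorem over $\mathbb{F}_2$, specialized to extracting one binary digit at a time. Recall that Lucas' theorem says $\overline{\binom{n}{k}} = \prod_j \overline{\binom{n_j}{k_j}}$ where $n = \sum_j n_j 2^j$ and $k = \sum_j k_j 2^j$ are the binary expansions; the claimed identity is precisely the statement obtained by peeling off the lowest bit, since $\lfloor n/2 \rfloor$ and $\lfloor k/2 \rfloor$ shift the binary expansions down by one digit, and $\overline{\binom{n_0}{k_0}}$ is $0$ exactly when $n_0 = 0$ and $k_0 = 1$, i.e.\ when $n$ is even and $k$ is odd.

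The cleanest self-contained route is a generating-function argument over $\mathbb{F}_2$, avoiding any appeal to Lucas' theorem itself. First I would work in $\mathbb{F}_2[z]$ and use the Frobenius identity $(1+z)^2 = 1 + z^2$, so that $(1+z)^n = (1+z)^{n_0}((1+z)^2)^{\lfloor n/2\rfloor} = (1+z)^{n_0}(1+z^2)^{\lfloor n/2\rfloor}$ where $n_0 \in \{0,1\}$ is the parity bit of $n$. Expanding the right-hand side, $(1+z^2)^{\lfloor n/2\rfloor} = \sum_i \overline{\binom{\lfloor n/2\rfloor}{i}} z^{2i}$ contributes only even powers of $z$, while the factor $(1+z)^{n_0}$ is either $1$ (if $n$ even) or $1+z$ (if $n$ odd). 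Comparing the coefficient of $z^k$ on both sides then reads off $\overline{\binom{n}{k}}$: if $n$ is even, the coefficient of $z^k$ is $0$ when $k$ is odd and $\overline{\binom{n/2}{k/2}}$ when $k$ is even; if $n$ is odd, writing $k = 2i$ or $k = 2i+1$, in either case the coefficient is $\overline{\binom{\lfloor n/2\rfloor}{\lfloor k/2\rfloor}}$ since the $(1+z)$ factor shifts the even-power series by one without changing which binomial coefficient appears. This exactly matches the two cases in the statement.

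I would organize the write-up as: (1) state the Frobenius identity $(1+z^2) = (1+z)^2$ in $\mathbb{F}_2[z]$; (2) split $n = 2\lfloor n/2\rfloor + n_0$ and factor $(1+z)^n$ accordingly; (3) do the coefficient extraction in the two parity cases for $n$, and within the odd case the two parity subcases for $k$. The main obstacle is essentially bookkeeping: one must be careful that in the case "$n$ odd, $k$ even, say $k=2i$", the coefficient of $z^{2i}$ in $(1+z)(1+z^2)^{\lfloor n/2\rfloor}$ comes only from the $1$ term (the $z$ term contributes to odd powers), giving $\overline{\binom{\lfloor n/2\rfloor}{i}} = \overline{\binom{\lfloor n/2\rfloor}{\lfloor k/2\rfloor}}$, while in the subcase "$n$ odd, $k=2i+1$ odd" the coefficient comes only from the $z$ term, giving $\overline{\binom{\lfloor n/2\rfloor}{i}}$, and here $\lfloor k/2 \rfloor = \lfloor (2i+1)/2\rfloor = i$ as well — so both subcases collapse to the single formula $\overline{\binom{\lfloor n/2\rfloor}{\lfloor k/2\rfloor}}$, consistent with the "otherwise" branch. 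No deep idea is needed beyond the characteristic-$2$ Frobenius trick; the proof is short.
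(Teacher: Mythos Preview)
Your argument is correct: the Frobenius identity $(1+z)^2 = 1+z^2$ in $\mathbb{F}_2[z]$ gives $(1+z)^n = (1+z)^{n_0}(1+z^2)^{\lfloor n/2\rfloor}$, and your coefficient extraction in the four parity subcases is accurate. The paper itself does not prove this result at all---it simply cites Theorem~4.1.10 of Gross, \emph{Combinatorial Methods with Computer Applications}---so your proposal is strictly more self-contained than what the paper provides. Your approach is in fact the standard one-bit-at-a-time proof of Lucas' theorem specialized to $p=2$, which is presumably what the cited reference does as well; the added value of your write-up is that a reader need not chase the citation.
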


\begin{proof}
See Theorem 4.1.10 in \cite{jlg}.
\end{proof}

\begin{lem}\label{lemco}
Let $d=2^nl$. Then for all $1\leq k \leq 2^n-1$ we have that
\[
\overline{\binom{2^nl+j}{k}}=
 \begin{cases}
     \hfill 1 \hfill & \text{if \hfill$j=k$}\\
     \hfill 0 \hfill & \text{if \hfill$ 0\leq j \leq k-1$}
\end{cases}
\]
where $\overline{a}$ denotes the image of $a$ in $\mathbb{F}_2$.
\end{lem}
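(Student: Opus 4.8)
The plan is to prove the slightly more general statement that
$\overline{\binom{2^n l + j}{k}} = \overline{\binom{j}{k}}$
whenever $0 \le j \le 2^n - 1$ and $0 \le k \le 2^n - 1$, for any $l \ge 0$; the Lemma then follows at once, since for $0 \le j \le k$ the right-hand side is $\overline{\binom{k}{k}} = 1$ when $j = k$ and $\overline{\binom{j}{k}} = 0$ when $0 \le j \le k-1$. Conceptually this is just Lucas' theorem: adding $2^n l$ to $j$ only alters binary digits in positions $\ge n$, and since $k < 2^n$ those digits are irrelevant. Since Theorem \ref{th1} has already been recorded (the ``halving'' form of Lucas modulo $2$), I would obtain the claim by induction on $n$ using it.

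The base case $n = 0$ forces $j = k = 0$ and reads $\overline{\binom{l}{0}} = 1 = \overline{\binom{0}{0}}$. For the inductive step, write $j = 2j_0 + j_1$ and $k = 2k_0 + k_1$ with $j_1, k_1 \in \{0,1\}$ and $0 \le j_0, k_0 \le 2^n - 1$, and note that $2^{n+1} l + j = 2(2^n l + j_0) + j_1$ has the same parity as $j$. If $j$ is even and $k$ is odd, then by Theorem \ref{th1} both $\overline{\binom{2^{n+1}l+j}{k}}$ and $\overline{\binom{j}{k}}$ vanish. Otherwise Theorem \ref{th1} applies in its second form to both binomial coefficients at once (this is exactly where one checks that $2^{n+1}l+j$ and $j$ having the same parity is needed), giving $\overline{\binom{2^{n+1}l+j}{k}} = \overline{\binom{2^n l + j_0}{k_0}}$ and $\overline{\binom{j}{k}} = \overline{\binom{j_0}{k_0}}$; the inductive hypothesis (valid since $0 \le j_0, k_0 \le 2^n - 1$) identifies the first two, completing the step.

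To conclude: given $d = 2^n l$ and $1 \le k \le 2^n - 1$, apply the claim with this $n$ and $l$ and with $0 \le j \le k$, which automatically satisfies $j \le 2^n - 1$. This yields $\overline{\binom{2^n l + j}{k}} = \overline{\binom{j}{k}}$, equal to $1$ when $j = k$ and to $0$ when $0 \le j \le k-1$, which is the assertion of Lemma \ref{lemco}. I do not anticipate a genuine obstacle here; the only delicate point is the parity bookkeeping in the inductive step that lets Theorem \ref{th1} be invoked in the same case for $2^{n+1}l+j$ and for $j$ simultaneously.
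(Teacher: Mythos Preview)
Your proof is correct and follows essentially the same route as the paper: induction on $n$ using Theorem \ref{th1}, with a parity case analysis in the inductive step. Your packaging is slightly cleaner in that you first establish the more general identity $\overline{\binom{2^n l + j}{k}} = \overline{\binom{j}{k}}$ for all $0 \le j,k \le 2^n-1$ and then specialize, whereas the paper proves the two cases $j=k$ and $0\le j\le k-1$ directly; but the underlying mechanism is the same.
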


\begin{proof}
If $n=1$, then $k=1$ and clearly we have that 
$\overline{\binom{2l+1}{1}}=1$ and $\overline{\binom{2l}{1}}=0$. Suppose by induction that the result is true for $n$, let us prove it for $n+1$. For $k=1$ we have that $\overline{\binom{2^{n+1}l+1}{1}}=1$ and $\overline{\binom{2^{n+1}l}{1}}=0$. Suppose $2 \leq k \leq 2^{n+1}-1$ and $0\leq j \leq k-1$, then $1\leq \lfloor k/2 \rfloor \leq 2^n-1$, and $0 \leq \lfloor j/2 \rfloor \leq \lfloor k/2 \rfloor -1$. Suppose first that $k$ is even. Then by Theorem $\ref{th1}$ and by induction we have that
\[
\overline{\binom{2^{n+1}l+j}{k}}=\overline{\binom{2^{n}l+\lfloor j/2 \rfloor}{\lfloor k/2 \rfloor}}=
 \begin{cases}
     \hfill 1 \hfill & \text{if $j=k$}\\
     \hfill 0 \hfill & \text{if $ 0\leq j \leq k-1$}
\end{cases}
\]
Suppose now that $k$ is odd. Note that $2^{n+1}l+k$ is odd then by Theorem $\ref{th1}$ and by induction we have that
$$\overline{\binom{2^{n+1}l+k}{k}}=\overline{\binom{2^{n}l+\lfloor k/2 \rfloor}{\lfloor k/2 \rfloor}}=1.$$
Now if $0\leq j \leq k-1$ is even then by Theorem $\ref{th1}$ we have that 
$$\overline{\binom{2^{n+1}l+j}{k}}=0.$$
If $0\leq j \leq k-1$ is odd then by Theorem $\ref{th1}$ and induction we have that 
$$\overline{\binom{2^{n+1}l+j}{k}}=\overline{\binom{2^{n}l+\lfloor j/2 \rfloor}{\lfloor k/2 \rfloor}}=0.$$
\end{proof}

\begin{lem}\label{lem2}
The map 
$$B^{(n)}_{k-d}\xrightarrow{\sigma_{d,n}}B^{(n)}_{k}$$ 
is surjective if and only if there exits $\alpha\in B^{(n)}_{k-d}$ such that $\alpha \sigma_{d,n}=x_1\cdots x_k$
\end{lem}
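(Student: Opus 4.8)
The plan is to prove the two implications separately, with the forward direction being immediate and the reverse direction relying on the symmetry of $\sigma_{d,n}$.

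For the forward direction, suppose the multiplication map $B^{(n)}_{k-d}\xrightarrow{\sigma_{d,n}}B^{(n)}_{k}$ is surjective. Since $x_1\cdots x_k$ is a nonzero element of $B^{(n)}_k$ (note $k\leq n$ is implicit, as otherwise there is nothing to prove), surjectivity immediately yields some $\alpha\in B^{(n)}_{k-d}$ with $\alpha\sigma_{d,n}=x_1\cdots x_k$.

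For the reverse direction, suppose such an $\alpha$ exists. The key observation is that $\sigma_{d,n}$ is fixed by every permutation of the variables $x_1,\dots,x_n$, since it is the image of a symmetric polynomial. Let $S=\{j_1,\dots,j_k\}$ be an arbitrary $k$-element subset of $\{1,\dots,n\}$ and let $\tau$ be any permutation of $\{1,\dots,n\}$ carrying $\{1,\dots,k\}$ to $S$. Applying the induced algebra automorphism of $B^{(n)}$ to the identity $\alpha\sigma_{d,n}=x_1\cdots x_k$ gives $\tau(\alpha)\,\sigma_{d,n}=\tau(\alpha)\,\tau(\sigma_{d,n})=\tau(\alpha\sigma_{d,n})=x_{j_1}\cdots x_{j_k}$, with $\tau(\alpha)\in B^{(n)}_{k-d}$. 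Hence every squarefree monomial of degree $k$ lies in the image $\sigma_{d,n}B^{(n)}_{k-d}$.

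Finally, since the squarefree monomials of degree $k$ in $x_1,\dots,x_n$ form a basis of $B^{(n)}_k$ and the image $\sigma_{d,n}B^{(n)}_{k-d}$ is a linear subspace, it must equal all of $B^{(n)}_k$; that is, the multiplication map is surjective. There is no serious obstacle here; the only point requiring care is recording that $\sigma_{d,n}$ is genuinely permutation-invariant in $B^{(n)}$ (which is clear, as the defining relations $x_i^2=0$ are permuted among themselves) and that the standard monomials span $B^{(n)}_k$.
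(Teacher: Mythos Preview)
Your proof is correct and follows essentially the same approach as the paper: the forward direction is immediate, and for the converse you use the $\Sigma_n$-action on $B^{(n)}$ together with the invariance of $\sigma_{d,n}$ to show that every basis monomial $x_{j_1}\cdots x_{j_k}$ lies in the image, exactly as the paper does.
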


\begin{proof}
One way is trivial. In the other way suppose that there exits $\alpha\in B^{(n)}_{k-d}$ such that $\alpha \sigma_{d,n}=x_1\cdots x_k$. Note that the set $\{x_{i_1}\cdots x_{i_k} \mid 1\leq i_1<\cdots<i_k\leq n \}$ is a basis for the vector space $B^{(n)}_{k}$. By the natural action of the group of permutations of $n$-elements $\Sigma_n$ on $B^{(n)}$, given $g \in \Sigma_n$ we have that
\begin{align*}
x_{g(1)}\cdots x_{g(k)}&=g*(x_{1}\cdots x_{k})\\
&=g *( \alpha \sigma_{d,n})\\
&= (g * \alpha)(g * \sigma_{d,n})\\
&=(g * \alpha) \sigma_{d,n}
\end{align*}

\end{proof}
The next theorem is one of the key results of this section.
\begin{thm}\label{thmp}
Let $d=2^ml$. Then $\sigma_{d,n}$ is semi-regular for all $n=d+i$, with $0\leq i \leq 2^{m+1}-1$.
\end{thm}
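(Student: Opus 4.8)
The plan is to prove semi-regularity of $\sigma_{d,n}$ for $n = d+i$ with $0 \le i \le 2^{m+1}-1$ by induction on $i$, using the criteria of Theorem \ref{th2} to reduce semi-regularity to injectivity/surjectivity of certain multiplication maps, and then feeding injectivity statements up the ladder $n \to n+1$ via Lemma \ref{leminj}. Since $d = 2^m l$ and $i \le 2^{m+1}-1 < 2d$ (note $2^{m+1} \le 2\cdot 2^m \le 2d$), we are always in the range $d \ge n/3$ covered by Theorem \ref{th2}, with the boundary case $n = 3d$ occurring only when $l=1$ and $i = 2^{m+1}-1 = 2d-1$... wait, $3d = 3\cdot 2^m l$ and $d+i = 2^m l + i$, so $n=3d$ forces $i = 2^{m+1}l$, which exceeds $2^{m+1}-1$ unless we reinterpret; in fact $n = 3d$ is \emph{not} reached in this range, so we only need part (a) of Theorem \ref{th2}, i.e. the case $d > n/3$.

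**First I would** set $D = \lceil (n+d)/2 \rceil$ and identify the two maps to control: injectivity of $B^{(n)}_{D-1-d} \xrightarrow{\sigma_{d,n}} B^{(n)}_{D-1}$ and surjectivity of $B^{(n)}_{D-d} \xrightarrow{\sigma_{d,n}} B^{(n)}_{D}$. For the base case $i=0$, i.e. $n=d$: then $\sigma_{d,n} = x_1\cdots x_n$ is the top-degree monomial, $\ind = n$, and semi-regularity is trivial (Theorem \ref{trivialsem}). For the inductive step, assuming $\sigma_{d,n}$ is semi-regular I would extract from Theorem \ref{th2}(a) the injectivity of the relevant multiplication map at level $n$, then invoke Lemma \ref{leminj} to promote it to level $n+1$, and then apply Lemma \ref{lem1} to slide the injectivity down through all lower degrees up to the needed one at level $n+1$. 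The surjectivity half is the place where Lemma \ref{lem2} enters: surjectivity of $B^{(n+1)}_{D'-d} \xrightarrow{\sigma_{d,n+1}} B^{(n+1)}_{D'}$ is equivalent to finding $\alpha$ with $\alpha \sigma_{d,n+1} = x_1\cdots x_{D'}$; here the binomial-coefficient computations of Lemma \ref{lemco} (together with Lemma \ref{lemmul}, $\sigma_{a,n}\sigma_{b,n} = \overline{\binom{a+b}{a}}\sigma_{a+b,n}$) are what make a suitable $\alpha$ — a symmetric-polynomial combination — actually hit the target degree with coefficient $1$.

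**The key technical input** is Lemma \ref{lemco}: for $d = 2^m l$ and $1 \le k \le 2^m - 1$, $\overline{\binom{2^m l + j}{k}}$ is $1$ when $j = k$ and $0$ when $0 \le j \le k-1$. This is precisely what lets $\sigma_{d,n}\cdot \sigma_{k,n}$ type products telescope: multiplying $\sigma_{d,n}$ by $\sigma_{i,n}$ (for the relevant small $i$, which is where the constraint $i \le 2^{m+1}-1$ bites) produces $\sigma_{d+i,n}$ with coefficient exactly $1$ rather than $0$, since the relevant binomial coefficient is odd. So the argument must carefully track which $\sigma_{j,n}$ products survive mod $2$; the range $0 \le i \le 2^{m+1}-1$ is exactly the range in which Lemma \ref{lemco} guarantees the needed coefficients behave correctly (one should split according to whether $i < 2^m$ or $2^m \le i \le 2^{m+1}-1$ and apply the lemma, possibly after one application of Theorem \ref{th1} to halve indices).

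**The main obstacle** I anticipate is the surjectivity verification: showing that the target monomial $x_1\cdots x_{D}$ lies in $\sigma_{d,n}B^{(n)}_{D-d}$. Injectivity propagates cleanly through Lemmas \ref{leminj} and \ref{lem1}, but surjectivity does not obviously lift from $n$ to $n+1$, so it likely has to be re-established at each stage by explicitly exhibiting $\alpha$ (via Lemma \ref{lem2}) as a concrete symmetric expression and checking, using Lemmas \ref{lemmul}, \ref{th1}, and \ref{lemco}, that the coefficient of the target in $\alpha\sigma_{d,n}$ is the nonzero element of $\bbF_2$. Equivalently — and this may be the cleaner route — one computes the full Hilbert series $HS_{(\sigma_{d,n})}(z)$ directly from the $\sigma$-multiplication structure and matches it against $\left[(1+z)^n/(1+z^d)\right]$ as given by Theorem \ref{sertrun1}, invoking Theorem \ref{bfsthm}(3). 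The bookkeeping of which coefficients vanish mod $2$ across the whole degree range $0 \le k < D$ is where the real work lies, and the hypothesis $i \le 2^{m+1}-1$ is the exact threshold beyond which some required coefficient would turn even and the construction would break.
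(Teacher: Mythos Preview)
Your overall plan is right and matches the paper's: induction on $i$, noting $n<3d$ throughout so that only Theorem \ref{th2}(a) is needed, propagating injectivity from $n-1$ to $n$ via Lemma \ref{leminj}, and verifying surjectivity by producing an explicit preimage of a monomial per Lemma \ref{lem2}. The base cases $i=0,1$ are handled by Theorem \ref{trivialsem}, and the paper's induction begins in earnest at $i=2$.

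There is, however, a genuine gap in your surjectivity step. You propose to use $\sigma_{k,n}\sigma_{d,n}$ and Lemma \ref{lemmul}; but that product is $\overline{\binom{d+k}{k}}\,\sigma_{d+k,n}$, which even when the binomial coefficient is odd is a full symmetric polynomial, not a single monomial, so Lemma \ref{lem2} cannot be applied. The paper's key device, which you do not mention, is to multiply by a symmetric polynomial in \emph{fewer} variables: with $D=\lceil(n+d)/2\rceil$ and $k=D-d$, one takes $\alpha=\sigma_{k,D}$ (or, in the odd-parity case, $x_1\sigma_{k-1,D}$), expands $\sigma_{d,n}$ via the splitting identity (\ref{re1}) across the first $D$ variables versus the rest, and then applies Lemma \ref{lemmul} termwise to get
\[
\sigma_{k,D}\,\sigma_{d,n}=\sum_{j=0}^{d}\overline{\binom{D-j}{k}}\,\sigma_{D-j,D}\,\sigma_j(x_{D+1},\dots,x_n).
\]
Now Lemma \ref{lemco} (with $1\le k\le 2^m-1$) kills every term except $j=0$, leaving $\sigma_{D,D}=x_1\cdots x_D$. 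This is the step where the constraint $i\le 2^{m+1}-1$ actually bites: it forces $k\approx i/2$ to lie in the range $1\le k\le 2^m-1$ (or $k-1$ in that range for the odd case) required by Lemma \ref{lemco}.

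A second, smaller point: the relevant case split is by the parity of $i$, not by $i<2^m$ versus $i\ge 2^m$. The parity governs whether $D=\lceil(n+d)/2\rceil$ stays the same or increases by one when passing from $n-1$ to $n$, and that in turn determines how you feed injectivity forward. In the odd-parity case the paper first observes that the surjective map $B^{(n-1)}_{D-1-d}\to B^{(n-1)}_{D-1}$ coming from the inductive hypothesis is between spaces of equal dimension, hence injective, before applying Lemma \ref{leminj}. Your proposed split does not track this and would not line up with either the Lemma \ref{leminj} step or the range in Lemma \ref{lemco}.
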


\begin{proof}
For $0\leq i \leq 1$ the result follows from Theorem \ref{trivialsem}. Now notice that for all $n=d+i$ with $0\leq i \leq 2^{m+1}-1$ we have that $n<3d$. Thus, by Theorem $\ref{th2}$ we need to prove that for $D=\lceil (n+d)/2\rceil$ the map 
$$B^{(n)}_{D-1-d}\xrightarrow{\sigma_{d,n}}B^{(n)}_{D-1}$$ 
is injective and the map
$$B^{(n)}_{D-d}\xrightarrow{\sigma_{d,n}}B^{(n)}_D$$
is surjective. 
Let us prove by induction over $i$ that the result is true for all $2 \leq i \leq 2^{m+1}-1$. If $i=2$ we have that $n=d+2$ thus $D=d+1$ therefore we need to show that
$$B^{(d+2)}_{1}\xrightarrow{\sigma_{d,d+2}}B^{(d+2)}_{d+1}$$
is surjective. By equation (\ref{re1}) and Lemma \ref{lemmul} we have that
\begin{align*}
\sigma_{1,d+1}\sigma_{d,d+2}&=\sigma_{1,d+1}(\sigma_{d,d+1}+\sigma_{d-1,d+1}x_{d+2})\\
&=\overline{\binom{d+1}{1}}\sigma_{d+1,d+1}+\overline{\binom{d}{1}}\sigma_{d,d+1}x_{d+2}\\
&=\sigma_{d+1,d+1}=x_1\cdots x_{d+1}.
\end{align*}
Therefore by Lemma $\ref{lem2}$ the map is onto. Suppose now that the result is true for $d+i$ with $2\leq i \leq 2^{m+1}-2$. Let us see that $\sigma_{d,n}$ is semi-regular for $n=d+i+1$. Suppose that $i+1$ is even, so $4\leq i+1 \leq 2^{m+1}-2$ then taking $k=(i+1)/2$ we have that $2\leq k \leq 2^{m}-1$, $n=d+2k$ and
$$D= \left\lceil {\frac{d+i+1+d}{2}} \right\rceil= \left\lceil \frac{d+i+d}{2} \right\rceil=d+k.$$
By Theorem \ref{th2} we want to show that the map 
$$B^{(n)}_{D-1-d}\xrightarrow{\sigma_{d,n}}B^{(n)}_{D-1}$$ 
is injective and the map
$$B^{(n)}_{D-d}\xrightarrow{\sigma_{d,n}}B^{(n)}_{D}$$
is surjective.  By induction we have that $\sigma_{d,n-1}$ is semi-regular. Since $\left\lceil (n-1+i)/2 \right\rceil \\*=\left\lceil (d+i+d)/2 \right\rceil=D$ we have that the map 
$$B^{(n-1)}_{D-1-d}\xrightarrow{\sigma_{d,n-1}}B^{(n-1)}_{D-1}$$ 
is injective. By Lemma \ref{leminj} we have that $$B^{(n)}_{D-1-d}\xrightarrow{\sigma_{d,n}}B^{(n)}_{D-1}$$ 
is injective. Now, want to show that
$$B^{(n)}_{D-d}\xrightarrow{\sigma_{d,n}}B^{(n)}_{D}$$
is surjective. Since $D=d+k$, the we need to show that
$$B^{(n)}_{k}\xrightarrow{\sigma_{d,n}}B^{(n)}_{d+k}$$
is surjective. By equation (\ref{re1}) and Lemma \ref{lemmul} we have that
\begin{align*}
\sigma_{k,d+k}\sigma_{d,n}&= \sigma_{k,d+k}\sigma_{d,d+2k}\\
&=\sigma_{k,d+k}\sum_{j=0}^{d}\sigma_{d-j,d+k}\sigma_{j}(x_{d+k+1},...,x_{d+2k})\\
&=\sum_{j=0}^{d}\overline{\binom{d+k-j}{k}} \sigma_{d+k-j,d+k}\sigma_{j}(x_{d+k+1},...,x_{d+2k}).
\end{align*}
Since $2\leq k \leq 2^{m}-1$ by Lemma \ref{lemco} we have that 
$$\sigma_{k,d+k}\sigma_{d,n}=\sigma_{d+k,d+k}=x_1\cdots x_{d+k}.$$
By Lemma $\ref{lem2}$ the map is onto. Suppose now that $i+1$ is odd. Since $2\leq i \leq 2^{m+1}-2$ then taking $k=i/2+1$ we have that $2\leq k \leq 2^{m}$, $n=d+2k-1$ and
$$D= \left\lceil {\frac{d+i+1+d}{2}} \right\rceil= \left\lceil \frac{d+i+d}{2} \right\rceil+1=d+k.$$ 
We want to show that the map 
$$B^{(n)}_{D-1-d}\xrightarrow{\sigma_{d,n}}B^{(n)}_{D-1}$$ 
is injective and the map
$$B^{(n)}_{D-d}\xrightarrow{\sigma_{d,n}}B^{(n)}_{D}$$
is surjective. By induction we have that $\sigma_{d,n-1}$ is semi-regular. Since $\left\lceil (n-1+i)/2 \right\rceil \\*=\left\lceil (d+i+d)/2 \right\rceil=d+k-1=D-1$ we have that the map 
$$B^{(n-1)}_{D-2-d}\xrightarrow{\sigma_{d,n-1}}B^{(n-1)}_{D-2}$$ 
is injective and the map
$$B^{(n-1)}_{D-1-d}\xrightarrow{\sigma_{d,n-1}}B^{(n-1)}_{D-1}$$ 
is surjective. But notice that 
\begin{align*}
\dim B^{(n-1)}_{D-1-d}&=\dim B^{(n-1)}_{k-1}\\
&=\binom{n-1}{k-1}\\
&=\binom{d+i}{i/2}\\
&=\binom{d+i}{d+i/2}\\
&=\dim B^{(n-1)}_{d+k-1}\\
&=\dim B^{(n-1)}_{D-1}.
\end{align*}
Therefore the map $$B^{(n-1)}_{D-1-d}\xrightarrow{\sigma_{d,n-1}}B^{(n-1)}_{D-1}$$ 
is injective. By Lemma \ref{leminj} we have that $$B^{(n)}_{D-1-d}\xrightarrow{\sigma_{d,n}}B^{(n)}_{D-1}$$ 
is injective. Now, we want to show that
$$B^{(n)}_{D-d}\xrightarrow{\sigma_{d,n}}B^{(n)}_{D}$$
is surjective. Since $k=D-d$ then we want to show that
$$B^{(n)}_{k}\xrightarrow{\sigma_{d,n}}B^{(n)}_{d+k}$$
is surjective. By equation (\ref{re1}) and Lemma \ref{lemmul} we have that
\begin{align*}
\sigma_{k-1,d+k}\sigma_{d,n}&= \sigma_{k-1,d+k}\sigma_{d,d+2k-1}\\
&=\sigma_{k-1,d+k}\sum_{j=0}^{d}\sigma_{d-j,d+k}\sigma_{j}(x_{d+k+1},...,x_{d+2k-1})\\
&=\sum_{j=0}^{d}\overline{\binom{d+k-1-j}{k-1}} \sigma_{d+k-1-j,d+k}\sigma_{j}(x_{d+k+1},...,x_{d+2k-1}).
\end{align*}
Since $2\leq k \leq 2^m$, then $1\leq k-1 \leq 2^m-1$. Thus, by Lemma \ref{lemco} we have that 
$$\sigma_{k-1,d+k}\sigma_{d,n}=\sigma_{d+k-1,d+k}.$$
Therefore, $x_1\sigma_{k-1,d+k}\in B^{(n)}_k$ and $x_1\sigma_{k-1,d+k}\sigma_{d,n}=x_1\sigma_{d+k-1,d+k}=\sigma_{d+k,d+k}=x_1\cdots x_{d+k}$. By Lemma $\ref{lem2}$ the map is onto.
\end{proof}

\begin{lem} Let $m$ be a positive integer
 \[
\overline{\binom{j}{2^m}}=
 \begin{cases}
     \hfill 0 \hfill & \text{if $j=2^{m+1}$}\\
     \hfill 1 \hfill & \text{if $2^m \leq j \leq 2^{m+1}-1$}
\end{cases}
\]
where $\overline{k}$ denotes the image of $k$ in $\mathbb{F}_2$.
\end{lem}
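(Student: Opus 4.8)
The plan is to argue by induction on $m$, using only the mod-$2$ recursion of Theorem \ref{th1} to reduce each binomial coefficient to one with smaller parameters. (Equivalently one could quote Lucas' theorem: for $2^m\le j\le 2^{m+1}-1$ the base-$2$ expansion of $j$ has a $1$ in position $m$ and nothing above it, which forces $\binom{j}{2^m}\equiv 1$, whereas for $j=2^{m+1}$ position $m$ is $0$, which forces $\binom{j}{2^m}\equiv 0$; but I will phrase everything through Theorem \ref{th1} to stay inside the machinery already set up.)

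\emph{Base case $m=1$.} One checks directly that $\binom{2}{2}=1$, $\binom{3}{2}=3$, $\binom{4}{2}=6$, i.e.\ $1,1,0$ modulo $2$, which is exactly the claim.

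\emph{Inductive step.} Assume the statement for $m$ and take $j$ with $2^{m+1}\le j\le 2^{m+2}$; note that $2^{m+1}$ is even since $m\ge 1$. If $j=2^{m+2}$, then $j$ and $2^{m+1}$ are both even, so Theorem \ref{th1} gives $\binom{j}{2^{m+1}}\equiv\binom{2^{m+1}}{2^m}$, which is the ``$j=2^{m+1}$'' instance of the induction hypothesis and hence $\equiv 0$. If instead $2^{m+1}\le j\le 2^{m+2}-1$, then whatever the parity of $j$ we are never in the excluded case ``$n$ even, $k$ odd'' (because $k=2^{m+1}$ is even), so Theorem \ref{th1} puts us in the ``otherwise'' branch and yields $\binom{j}{2^{m+1}}\equiv\binom{\lfloor j/2\rfloor}{2^m}$. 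A short check of the two parities of $j$ shows $\lfloor j/2\rfloor\in[2^m,\,2^{m+1}-1]$, so the induction hypothesis gives $\binom{\lfloor j/2\rfloor}{2^m}\equiv 1$. This closes the induction.

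There is no serious obstacle here; the only points requiring care are the bookkeeping on the range of $\lfloor j/2\rfloor$ in the two parity subcases, and the fact that the right endpoint $j=2^{m+2}$ must be split off, since there $\lfloor j/2\rfloor=2^{m+1}$ sits at the ``$j=2^{m+1}$'' boundary rather than in the all-ones interval and so needs the second form of the induction hypothesis. As an alternative for the middle range one can avoid induction entirely: since $\binom{j}{2^m}=\binom{j}{j-2^m}$ and $0<j-2^m\le 2^m-1$ when $2^m<j\le 2^{m+1}-1$, Lemma \ref{lemco} (with its $n$ taken to be $m$, its $l$ taken to be $1$, and both its $j$ and $k$ taken to be $j-2^m$) gives the value $1$ at once; the boundary value $\binom{2^m}{2^m}=1$ is trivial, and only $\binom{2^{m+1}}{2^m}$ is left, which iterating Theorem \ref{th1} reduces to $\binom{2}{1}=0$.
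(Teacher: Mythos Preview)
Your proof is correct and follows essentially the same route as the paper: induction on $m$ with base case $m=1$, handling $j=2^{m+2}$ separately via Theorem \ref{th1} applied to two even arguments, and for the remaining range using that $2^{m+1}$ is even to land in the ``otherwise'' branch of Theorem \ref{th1} and then checking $\lfloor j/2\rfloor\in[2^m,2^{m+1}-1]$. The additional remarks about Lucas' theorem and the alternative via Lemma \ref{lemco} are fine but not needed.
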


\begin{proof}
If $m=1$ we have that 
$$\overline{\binom{4}{2}}=0,\:\text{and}\:\overline{\binom{3}{2}}=\overline{\binom{2}{2}}=1.$$
Suppose the result is true for $m\geq  1$ let us prove that it is true for $m+1$. Note that by Theorem \ref{th1} and induction
$$\overline{\binom{2^{m+2}}{2^{m+1}}}=\overline{\binom{2^{m+1}}{2^{m}}}=0.$$
Now, if $2^{m+1}\leq j \leq 2^{m+2}-1$, then $2^{m}\leq \lfloor j/2 \rfloor \leq 2^{m+1}-1$ so by Theorem \ref{th1} and induction
$$\overline{\binom{j}{2^{m+1}}}=\overline{\binom{\lfloor j/2 \rfloor}{2^{m}}}=1.$$
\end{proof}

\begin{lem}\label{lemin2}
Let $d=2^m$ and $n=3d=2^{m+1}+2^m$. Then 
$$\sigma_d(x_{i_1},\dots,x_{i_{2^{m+1}}})\sigma_{d,n}=x_{i_1}\cdots x_{i_{2^{m+1}}}+\sigma_{2d,n}$$
\end{lem}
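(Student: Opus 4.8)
The plan is to reduce to the case that the chosen variables are $x_1,\dots,x_{2^{m+1}}$ and then expand both sides using the splitting identity (\ref{re1}). Write $d=2^m$, so that $2^{m+1}=2d$ and $n=3d$. Applying a permutation of $\{1,\dots,n\}$ carrying $\{i_1,\dots,i_{2^{m+1}}\}$ onto $\{1,\dots,2d\}$ induces a ring automorphism of $B^{(n)}$ fixing $\sigma_{d,n}$ and $\sigma_{2d,n}$, so (exactly as in the proof of Lemma \ref{lem2}) it suffices to prove
\[
\sigma_d(x_1,\dots,x_{2d})\,\sigma_{d,n}=x_1\cdots x_{2d}+\sigma_{2d,n}.
\]

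First I would expand $\sigma_{d,n}=\sigma_d(x_1,\dots,x_{3d})$ via (\ref{re1}) with $k=2d$; since the second block $x_{2d+1},\dots,x_{3d}$ has exactly $d$ variables, this reads $\sigma_{d,n}=\sum_{i=0}^{d}\sigma_{d-i}(x_1,\dots,x_{2d})\,\sigma_i(x_{2d+1},\dots,x_{3d})$. Multiplying through by $\sigma_d(x_1,\dots,x_{2d})$ and applying Lemma \ref{lemmul} to each product $\sigma_d(x_1,\dots,x_{2d})\sigma_{d-i}(x_1,\dots,x_{2d})$ (a computation using only the first $2d$ variables) turns that product into $\overline{\binom{2d-i}{d}}\,\sigma_{2d-i}(x_1,\dots,x_{2d})$. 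Since $d=2^m$ and $\binom{2d-i}{d}=\binom{2d-i}{2^m}$, the preceding lemma evaluates these coefficients: $\overline{\binom{2d-i}{d}}=0$ for $i=0$ (as $2d=2^{m+1}$), and $\overline{\binom{2d-i}{d}}=1$ for $1\le i\le d$ (as then $2^m\le 2d-i\le 2^{m+1}-1$). Hence
\[
\sigma_d(x_1,\dots,x_{2d})\,\sigma_{d,n}=\sum_{i=1}^{d}\sigma_{2d-i}(x_1,\dots,x_{2d})\,\sigma_i(x_{2d+1},\dots,x_{3d}).
\]

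Finally I would apply (\ref{re1}) a second time, now to $\sigma_{2d,n}=\sigma_{2d}(x_1,\dots,x_{3d})$ split at $k=2d$: as $\sigma_i(x_{2d+1},\dots,x_{3d})=0$ for $i>d$, it reads $\sigma_{2d,n}=\sum_{i=0}^{d}\sigma_{2d-i}(x_1,\dots,x_{2d})\,\sigma_i(x_{2d+1},\dots,x_{3d})$, whose $i=0$ term is $\sigma_{2d}(x_1,\dots,x_{2d})=x_1\cdots x_{2d}$. Comparing this with the previous display and using that $\bbF_2$ has characteristic $2$ gives the stated identity. The argument is essentially bookkeeping; the only points needing care are that the second variable block has exactly $d$ elements (so both sums in (\ref{re1}) truncate at $i=d$ and every cross term involves disjoint variables, so the relations $x_j^2=0$ never intervene), the correct invocation of the preceding lemma on $\overline{\binom{j}{2^m}}$, and the separate treatment of the boundary index $i=0$. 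I do not anticipate a genuine obstacle.
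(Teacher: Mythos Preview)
Your argument is correct. Both your proof and the paper's rest on the same parity lemma for $\overline{\binom{j}{2^m}}$, but you apply it through a different mechanism. The paper works monomial by monomial: for each $\mu\in\Supp(\sigma_{2d,n})$ it counts that $\mu$ occurs in $\sigma_d(x_{i_1},\dots,x_{i_{2d}})\sigma_{d,n}$ exactly $\binom{j}{2^m}$ times, where $j$ is the number of variables $\mu$ shares with $x_{i_1}\cdots x_{i_{2d}}$; since $d\le j\le 2d$ here, the parity lemma says this is odd unless $j=2d$, i.e.\ unless $\mu=x_{i_1}\cdots x_{i_{2d}}$. You instead reduce by symmetry, expand $\sigma_{d,n}$ via the splitting identity \eqref{re1} at $k=2d$, collapse each term with Lemma~\ref{lemmul}, and then recognize the resulting sum as the expansion of $\sigma_{2d,n}$ minus its $i=0$ term. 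Your route recycles the algebraic toolkit already used in Theorem~\ref{thmp} and Lemma~\ref{lemso}, and the only thing to watch is that Lemma~\ref{lemmul} is being applied inside the subring on the first $2d$ variables (its proof goes through verbatim there). The paper's route is self-contained and makes the role of the overlap size $j$ explicit without any preliminary symmetry reduction.
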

\begin{proof}
Let $\mu \in \Supp(\sigma_{2d,n})$. Let $j$ be the number of common variables between $\mu$ and $x_{i_1}\cdots x_{i_{2^{m+1}}}$. Note that $2^m \leq j \leq 2^{m+1}$. $\mu$ will occur once in the product \\* $\sigma_d(x_{i_1},\dots,x_{i_{2^{m+1}}})\sigma_{d,n}$ for each occurrence of a sub-monomial of $\mu$ of degree $d=2^m$ in $\sigma_d(x_{i_1},\dots,x_{i_{2^{m+1}}})$. There are exactly $\binom{j}{2^m}$ such sub-monomials. If $j=2^{m+1}$ then $\mu =x_{i_1}\cdots x_{i_{2^{m+1}}}$ and by the above lemma this element appears an even number of times. So $x_{i_1}\cdots x_{i_{2^{m+1}}}\not\in \Supp ( \sigma_d(x_{i_1},\dots,x_{i_{2^{m+1}}})\sigma_{d,n})$. If $2^m\leq j \leq 2^{m+1}-1$ then by above lemma this element appears an odd number of times. So $\mu \in \Supp ( \sigma_d(x_{i_1},\dots,x_{i_{2^{m+1}}})\sigma_{d,n})$. Thus,
 $$\sigma_d(x_{i_1},\dots,x_{i_{2^{m+1}}})\sigma_{d,n}=x_{i_1}\cdots x_{i_{2^{m+1}}}+\sigma_{2d,n}.$$
\end{proof}

\begin{lem}\label{lemso}
Let $d=2^m$ and $n=3d$. Then the map
$$B^{(n)}_{d+1}\xrightarrow{\sigma_d}B^{(n)}_{2d+1}$$
is surjective.
\end{lem}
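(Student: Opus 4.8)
The plan is to reduce to the case of $n_0:=3d-1=2^{m+1}+2^m-1$ variables, where the semi-regularity of $\sigma_d$ is already available, by extracting a surjectivity statement from the Hilbert function and then lifting it by one variable. No attempt to write down an explicit $\gamma$ with $\gamma\sigma_{d,n}=x_1\cdots x_{2d+1}$ (as in Lemma \ref{lem2}) seems to close cleanly, so this indirect route is the one I would take.

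First I would note that $n_0=3d-1$ falls in the range covered by Theorem \ref{thmp} (take $l=1$ and $i=2^{m+1}-1$), so $\sigma_{d,n_0}$ is semi-regular. Since $n_0<3d$, Theorems \ref{bfsthm} and \ref{sertrun1} then give
$$HS_{(\sigma_{d,n_0})}(z)=\left[\frac{(1+z)^{n_0}}{1+z^d}\right]=\sum_{k=0}^{D_0-1}\left[\binom{n_0}{k}-\binom{n_0}{k-d}\right]z^k,\qquad D_0=\left\lceil\frac{n_0+d}{2}\right\rceil=2d.$$
In particular $\dim\bigl(B^{(n_0)}/(\sigma_{d,n_0})\bigr)_k=0$ for every $k\ge 2d$. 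Since $(\sigma_{d,n_0})_k=\sigma_{d,n_0}B^{(n_0)}_{k-d}$, this says exactly that multiplication by $\sigma_{d,n_0}$ is a surjection $B^{(n_0)}_{k-d}\to B^{(n_0)}_{k}$ for all $k\ge 2d$; only the cases $k=2d$ and $k=2d+1$ will be used.

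Next I would lift from $n_0$ to $n=3d$. Splitting off the last variable, equation (\ref{re1}) gives $\sigma_{d,n}=\sigma_{d,n_0}+x_n\sigma_{d-1,n_0}$, and as graded vector spaces $B^{(n)}=B^{(n_0)}\oplus x_nB^{(n_0)}$. Given $\beta\in B^{(n)}_{2d+1}$, write $\beta=\beta_0+x_n\beta_1$ with $\beta_0\in B^{(n_0)}_{2d+1}$, $\beta_1\in B^{(n_0)}_{2d}$. Using surjectivity in degree $2d+1$, choose $\gamma_0\in B^{(n_0)}_{d+1}$ with $\sigma_{d,n_0}\gamma_0=\beta_0$; then $\sigma_{d-1,n_0}\gamma_0\in B^{(n_0)}_{2d}$, so using surjectivity in degree $2d$, choose $\gamma_1\in B^{(n_0)}_{d}$ with $\sigma_{d,n_0}\gamma_1=\beta_1+\sigma_{d-1,n_0}\gamma_0$. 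Putting $\gamma=\gamma_0+x_n\gamma_1\in B^{(n)}_{d+1}$ and using $x_n^2=0$,
$$\sigma_{d,n}\gamma=\sigma_{d,n_0}\gamma_0+x_n\bigl(\sigma_{d,n_0}\gamma_1+\sigma_{d-1,n_0}\gamma_0\bigr)=\beta_0+x_n\beta_1=\beta,$$
so $B^{(n)}_{d+1}\xrightarrow{\sigma_{d,n}}B^{(n)}_{2d+1}$ is onto.

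The only delicate point is the cross-term $x_n\sigma_{d-1,n_0}\gamma_0$ appearing in the expansion of $\sigma_{d,n}\gamma$: absorbing it is precisely what forces one to use surjectivity of multiplication by $\sigma_{d,n_0}$ in degree $d$ as well as in degree $d+1$, which is why it is convenient to read surjectivity off in the whole range $k\ge 2d$ from the vanishing of the Hilbert function of $(\sigma_{d,n_0})$ above its index $D_0$, rather than recovering only a single degree from Theorem \ref{th2}. Everything else is the same bookkeeping used in the proof of Lemma \ref{leminj}.
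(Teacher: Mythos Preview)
Your proof is correct but takes a genuinely different route from the paper. The paper proceeds by direct construction: using the splitting of equation~(\ref{re1}) at the $(2d{+}1)$-st variable and the binomial identity of Lemma~\ref{lemco}, it shows $\sigma_{d-1,2d+1}\sigma_{d,n}=\sigma_{2d-1,2d+1}$, then multiplies by $x_1x_2$ to obtain the single monomial $x_1\cdots x_{2d+1}$ and invokes Lemma~\ref{lem2}. So the explicit $\gamma$ you thought would not close cleanly does in fact exist: $\gamma=x_1x_2\sigma_{d-1,2d+1}$. By contrast, you bootstrap from the semi-regularity of $\sigma_{d,n_0}$ with $n_0=3d-1$ (already available from Theorem~\ref{thmp}), read off surjectivity in degrees $2d$ and $2d+1$ from the vanishing of the Hilbert function above the index $D_0=2d$, and then lift one variable via the decomposition $B^{(n)}=B^{(n_0)}\oplus x_nB^{(n_0)}$, absorbing the cross-term $x_n\sigma_{d-1,n_0}\gamma_0$ into the choice of $\gamma_1$. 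Your argument is a surjectivity analogue of Lemma~\ref{leminj} and is more structural---it packages the binomial-coefficient combinatorics inside the prior theorem rather than repeating it---while the paper's is shorter and self-contained, yielding an explicit preimage. There is no circularity: Theorem~\ref{thmp} does not rely on the present lemma.
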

\begin{proof}
First, let us see that 
$$\sigma_{d-1,2d+1}\sigma_{d,n}=\sigma_{2d-1,2d+1}.$$
By equation (\ref{re1}), Lemma \ref{lemmul} and Lemma \ref{lemco} we have
\begin{align*}
\sigma_{d-1,2d+1}\sigma_{d,n}&=\sigma_{k,d+k}\sum_{j=0}^{d}\sigma_{d-j,2d+1}\sigma_{j}(x_{2d+2},...,x_{3d})\\
&=\sum_{j=0}^{d}\overline{\binom{d+(d-1-j)}{d-1}} \sigma_{2d-1-j,2d+1}\sigma_{j}(x_{2d+2},...,x_{3d})\\
&=\sum_{j=0}^{d}\overline{\binom{2^m+(2^m-1-j)}{2^m-1}} \sigma_{2d-1-j,2d+1}\sigma_{j}(x_{2d+2},...,x_{3d})\\
&=\sigma_{2d-1,2d+1}.
\end{align*}
Note that
\begin{align*}
\sigma_{2d-1,2d+1}&=x_1\sigma_{2d-2}(x_2,...,x_{2d+1})+x_2\sigma_{2d-2}(x_1,x_3,...,x_{2d-2})\\
&+ x_1x_2\sigma_{2d-3}(x_3,...,x_{2d-2})+x_3\cdots x_{2d+1}.
\end{align*}
Therefore \
$$x_1x_2\sigma_{d-1,2d+1}\sigma_{d,n}=x_1x_2\sigma_{2d-1,2d+1}=x_1x_2x_3\cdots x_{2d+1}.$$
Since $x_1x_2\sigma_{d-1,2d+1}\in B^{(n)}_{d+1}$ then by Lemma $\ref{lem2}$ the map is onto.
\end{proof}

\begin{thm}\label{thmf}
Let $d=2^m$. If $n=d+2^{m+1}$ then $\sigma_{d,n}$ is semi-regular.
\end{thm}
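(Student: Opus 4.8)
Since $d=2^m$, we have $n=d+2^{m+1}=2^m+2^{m+1}=3d$, so $\sigma_{d,n}$ has degree exactly $n/3$ and Theorem~\ref{th2}(b) applies. The plan is to verify its three hypotheses:
\begin{enumerate}[(i)]
\item $B^{(n)}_{d-1}\xrightarrow{\sigma_{d,n}}B^{(n)}_{2d-1}$ is injective;
\item $B^{(n)}_{d}/\sigma_{d,n}B^{(n)}_{0}\xrightarrow{\sigma_{d,n}}B^{(n)}_{2d}$ is injective;
\item $B^{(n)}_{d+1}\xrightarrow{\sigma_{d,n}}B^{(n)}_{2d+1}$ is surjective.
\end{enumerate}
Condition (iii) is precisely the content of Lemma~\ref{lemso}, so nothing remains to do there.

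For (i), I would apply Theorem~\ref{thmp} with $l=1$ and $i=2^{m+1}-1$, which gives that $\sigma_{d,3d-1}$ is semi-regular. Since $d>(3d-1)/3$, part (a) of Theorem~\ref{th2} applies to this element with $D=\lceil((3d-1)+d)/2\rceil=2d$, yielding that $B^{(3d-1)}_{d-1}\xrightarrow{\sigma_{d,3d-1}}B^{(3d-1)}_{2d-1}$ is injective. As $d-1<3d-1$, Lemma~\ref{leminj} then upgrades this to injectivity of $B^{(3d)}_{d-1}\xrightarrow{\sigma_{d,n}}B^{(3d)}_{2d-1}$, which is (i).

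The heart of the argument is (ii), where Lemma~\ref{lemin2} does the work. Since $2^{m+1}=2d$, the lemma says that for every $2d$-element subset $S=\{i_1,\dots,i_{2d}\}\subseteq\{1,\dots,3d\}$ one has $\sigma_d(x_{i_1},\dots,x_{i_{2d}})\,\sigma_{d,n}=x_S+\sigma_{2d,n}$, where $x_S=x_{i_1}\cdots x_{i_{2d}}$ and $\sigma_d(x_{i_1},\dots,x_{i_{2d}})\in B^{(n)}_d$. Hence every element $x_S+\sigma_{2d,n}$, and therefore every sum $(x_S+\sigma_{2d,n})+(x_{S'}+\sigma_{2d,n})=x_S+x_{S'}$, lies in the image $J=\sigma_{d,n}B^{(n)}_d\subseteq B^{(n)}_{2d}$. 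The monomials $x_S$ with $|S|=2d$ form a basis of $B^{(n)}_{2d}$, so the differences $x_S+x_{S'}$ span the codimension-one subspace on which the ``sum of all coordinates'' functional vanishes; thus $\dim J\ge\binom{3d}{2d}-1$. Since $\dim B^{(n)}_d=\binom{3d}{d}=\binom{3d}{2d}$, the kernel of $\sigma_{d,n}\colon B^{(n)}_d\to B^{(n)}_{2d}$ has dimension at most $1$. On the other hand $\sigma_{d,n}$ has positive degree, so $\sigma_{d,n}^2=0$ and $0\ne\sigma_{d,n}$ lies in that kernel; hence the kernel equals $\bbF_2\,\sigma_{d,n}=\sigma_{d,n}B^{(n)}_0$, which gives (ii). With (i), (ii), and (iii) in hand, Theorem~\ref{th2}(b) shows that $\sigma_{d,n}$ is semi-regular.

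The step I expect to be the main obstacle is (ii): the quick dimension count succeeds only because Lemma~\ref{lemin2} exhibits the elements $x_S+\sigma_{2d,n}$ in the image for \emph{all} maximal complementary supports $S$, and one must observe that these already account for all of $B^{(n)}_{2d}$ except for the one dimension spanned by $\sigma_{d,n}$ itself. The remaining ingredients---(i) and (iii)---are routine given Theorem~\ref{th2}, Theorem~\ref{thmp}, Lemma~\ref{leminj}, and Lemma~\ref{lemso}.
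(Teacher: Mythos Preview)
Your proof is correct and follows essentially the same strategy as the paper: reduce to Theorem~\ref{th2}(b), get (i) from the semi-regularity of $\sigma_{d,3d-1}$ (Theorem~\ref{thmp}) plus Lemma~\ref{leminj}, get (iii) from Lemma~\ref{lemso}, and get (ii) from Lemma~\ref{lemin2} by a dimension count on the image. The only cosmetic difference is in (ii): the paper observes that the elements $x_S+\sigma_{2d,n}$ with one removed are linearly independent, whereas you note that their pairwise sums $x_S+x_{S'}$ already span the codimension-one ``zero coefficient sum'' subspace---these are equivalent ways of seeing that $\dim(\sigma_{d,n}B^{(n)}_d)\ge\binom{3d}{2d}-1$.
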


\begin{proof}
Suppose $n=d+2^{m+1}$ then $n=3d$. By Theorem \ref{th2} we want to show that the maps
$$B^{(n)}_{d-1}\xrightarrow{\sigma_{d,n}}B^{(n)}_{2d-1}$$ 
and
$$\frac{B^{(n)}_{d}}{\sigma_{d,n} B^{(n)}_{0}} \xrightarrow{\sigma_{d,n}}B^{(n)}_{2d}$$ 
are injective and the map
$$B^{(n)}_{d+1}\xrightarrow{\sigma_{d,n}}B^{(n)}_{2d+1}$$
is surjective. By Theorem \ref{thmp} we have that $\sigma_{d,n-1}$ is semi-regular. Since $n-1 \\* <n=3d$ then $\lceil (n-1+d)/2\rceil=2d$, thus the map 
$$B^{(n-1)}_{d-1}\xrightarrow{\sigma_{d,n-1}}B^{(n-1)}_{2d-1}$$ 
is injective. So, by Lemma \ref{leminj} the map 
$$B^{(n)}_{d-1}\xrightarrow{\sigma_{d,n}}B^{(n)}_{2d-1}$$ 
is injective. Also, by Lemma \ref{lemso} the map
$$B^{(n)}_{d+1}\xrightarrow{\sigma_{d,n}}B^{(n)}_{2d+1}$$
is surjective. Finally, by Lemma \ref{lemin2} we have that 
$$\sigma_d(x_{i_1},\dots,x_{i_{2d}})\sigma_{d,n}=x_{i_1}\cdots x_{i_{2d}}+\sigma_{2d,n}.$$
Consider the set
$$S=\{x_{i_1}\cdots x_{i_{2d}}+\sigma_{2d,n}\mid 1\leq i_{1}\leq \cdots \leq i_{2d} \leq n \}$$
Note that the set $S \setminus \{x_1\cdots x_{2^{m+1}}+\sigma_{2d,n}\}\subset B^{(n)}_{2d}$ is linearly independent. And this set has $\binom{n}{2d}-1=\dim B^{(n)}_{2d}-1$ elements. Therefore the map
$$\frac{B^{(n)}_{d}}{\sigma_d B^{(n)}_{0}} \xrightarrow{\sigma_{d,n}}B^{(n)}_{2d}$$ 
is injective. By all the above, $\sigma_{d,n}$ is semi-regular.
\end{proof}
The following theorem is our main result of this section.
\begin{thm}\label{gamsem2}
Let $d\geq 2$, where $d=2^{m}l$ with $l$ an odd number, and $m$ a non-negative integer. Then
\begin{enumerate}[(a)]
\item
If $l>1$, $\sigma_{d,n}$ is semi-regular if and only if $n=d,d+1,...,d+2^{m+1}-1$.
\item
If $l=1$, $\sigma_{d,n}$ is semi-regular if and only if $n=d,d+1,...,d+2^{m+1}$.
\end{enumerate}
\end{thm}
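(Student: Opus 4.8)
The plan is to reduce everything to results already established. The two ``if'' directions need no new work: Theorem~\ref{thmp} gives semi-regularity of $\sigma_{d,n}$ for all $n$ with $d\le n\le d+2^{m+1}-1$, which is precisely (a) and all but the last case of (b); and Theorem~\ref{thmf} handles the remaining case $n=d+2^{m+1}$ of (b), where $l=1$ so that $d=2^m$ and $n=3d$. Since $\sigma_{d,n}=0$ when $n<d$, the whole content of the theorem is the ``only if'' direction, i.e.\ non-semi-regularity of $\sigma_{d,n}$ for $n\ge d+2^{m+1}$ when $l>1$, and for $n\ge d+2^{m+1}+1=3d+1$ when $l=1$. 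The case $l=1$ is immediate: there $n>3d$ and $d=2^m\ge 2$, so by Theorem~\ref{thmbound} there is no semi-regular element of degree $d$ at all.

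For $l>1$ the essential input is a low-degree annihilator. By Lemma~\ref{lemmul}, $\sigma_{2^m,n}\,\sigma_{d,n}=\overline{\binom{d+2^m}{2^m}}\,\sigma_{d+2^m,n}$, and $\binom{d+2^m}{2^m}=\binom{2^m(l+1)}{2^m}$ is even, since adding $2^m$ to $d=2^m l$ in base two produces a carry ($l$ being odd) --- equivalently, one iterates Theorem~\ref{th1}. Hence $\sigma_{2^m,n}\,\sigma_{d,n}=0$. Because $l>1$ forces $2^m<d$, the nonzero element $\sigma_{2^m,n}$ cannot lie in $(\sigma_{d,n})$, an ideal generated in degree $d>2^m$. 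Thus $\sigma_{d,n}$ has a genuine fall at degree $d+2^m$, so $\ffd(\sigma_{d,n})\le d+2^m$.

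Next I would split the range into $n>d+2^{m+1}$ and $n=d+2^{m+1}$. For $n>d+2^{m+1}$: if $n>3d$ invoke Theorem~\ref{thmbound}; otherwise $d+2^{m+1}<n\le 3d$, and if $\sigma_{d,n}$ were semi-regular then Theorem~\ref{corind} gives $\ind(\sigma_{d,n})=\lceil(n+d)/2\rceil$ (or $2d+1$ when $n=3d$), which in either case exceeds $d+2^m\ge\ffd(\sigma_{d,n})$ (using $n\ge d+2^{m+1}+1$ and $d>2^m$), contradicting Corollary~\ref{indffd}.

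The remaining, and I expect hardest, case is the boundary $n=d+2^{m+1}$ with $l>1$: here $\lceil(n+d)/2\rceil=d+2^m$ coincides with the first-fall bound, so Corollary~\ref{indffd} yields no contradiction, and one must use the sharper criterion of Theorem~\ref{th2}(a). That theorem applies because $d>n/3$ --- an inequality equivalent to $l>1$ --- and with $D=\lceil(n+d)/2\rceil=d+2^m$ it forces, for a semi-regular $\sigma_{d,n}$, surjectivity of
$$
B^{(n)}_{D-d}=B^{(n)}_{2^m}\xrightarrow{\ \sigma_{d,n}\ }B^{(n)}_{d+2^m}=B^{(n)}_{D}.
$$
But $n-(d+2^m)=2^m$, so $\dim B^{(n)}_{2^m}=\binom{n}{2^m}=\binom{n}{d+2^m}=\dim B^{(n)}_{d+2^m}$, while this map kills $\sigma_{2^m,n}\neq 0$; a non-injective linear map between finite-dimensional spaces of equal dimension is not surjective, so Theorem~\ref{th2}(a) shows $\sigma_{d,n}$ is not semi-regular. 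Combining with Theorems~\ref{thmp} and~\ref{thmf} gives the stated equivalences in (a) and (b). The only genuinely delicate point is this last dimension count at $n=d+2^{m+1}$; everything else is a routine assembly of the first-fall-degree bound with the index formulas of Theorems~\ref{sertrun1},~\ref{sertrun2} and~\ref{corind}.
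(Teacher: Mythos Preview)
Your proof is correct and follows essentially the same approach as the paper's: both use the annihilator $\sigma_{2^m,n}\sigma_{d,n}=0$ (via Lemma~\ref{lemmul}) to bound the first fall degree, compare with the index via Corollary~\ref{indffd}/Theorem~\ref{corind}, and handle the delicate boundary $n=d+2^{m+1}$ for $l>1$ with the surjectivity criterion of Theorem~\ref{th2}(a) together with the dimension equality $\binom{n}{2^m}=\binom{n}{d+2^m}$. The only cosmetic differences are that the paper treats $m=0$ separately by quoting Theorem~\ref{gamsem1} and derives $n\le d+2^{m+1}$ in one stroke from $(n+d)/2\le\ind\le\ffd\le d+2^m$, whereas you split into the subcases $n>3d$ and $d+2^{m+1}<n\le 3d$; your argument is slightly more careful in separating $l=1$ from $l>1$ before invoking the first-fall bound.
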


\begin{proof}
Suppose first that $m=0$. Thus, $d$ is an odd number. By Theorem \ref{gamsem1} we have that (a) is true in the case $m=0$. Suppose now that $m\ge 1$. Suppose that $\sigma_{d,n}$ is semi-regular. By Theorem \ref{thmbound} we have that $n\leq 3d$. By Corollary \ref{corind}
$$\ind(\sigma_{d,n})\geq \left\lceil \frac{n+d}{2} \right\rceil \geq \frac{n+d}{2}.$$
Note that 
$$\sigma_{2^m,n}\sigma_{d,n}=\overline{\binom{2^ml+2^m}{2^m}}\sigma_{2^m+d,n}=\overline{\binom{l+1}{1}}\sigma_{2^m+d,n}=0.$$
Thus $\ffd(\sigma_{d,n})\leq d+2^m$. Since $\sigma_{d,n}$ is semi-regular we have that $\ind(\sigma_{d,n})\leq \ffd(\sigma_{d,n})$. So $n+d\leq 2d+2^{m+1}$ from which we obtain $n\leq d+2^{m+1}$.
Suppose $l=1$, by Theorem \ref{thmp} and Theorem \ref{thmf} we have that $\sigma_{d,n}$ is semi-regular for $n=d,d+1,...,d+2^{m+1}$. So (b) is proved. Suppose $l>1$. We already know that $n\leq d+2^{m+1}$. Suppose $n=d+2^{m+1}=2^{m+1}l+2^{m+1}$ note that since $l>1$ then $n<3d$. Thus, if $\sigma_{d,n}$ is semi-regular then $\ind(\sigma_{d,n})=\left\lceil (n+d)/2 \right\rceil=d+2^m$. By Theorem \ref{th2} the map
$$B^{(n)}_{2^m}\xrightarrow{\sigma_{d,n}}B^{(n)}_{d+2^m}$$
is surjective. However,
\begin{align*}
\dim B^{(n)}_{2^m}=\binom{n}{2^m}&=\binom{d+2^{m+1}}{2^m}\\
&=\binom{d+2^{m+1}}{d+2^m}\\
&=\dim B^{(n)}_{d+2^m}.
\end{align*} 
So the map 
$$B^{(n)}_{2^m}\xrightarrow{\sigma_{d,n}}B^{(n)}_{d+2^m}$$
is injective. But that is not possible since $\sigma_{2^m,n}\sigma_{d,m}=0$. Hence we must have $n\leq d+2^{m+1}-1$. Conversely, by Theorem \ref{thmp} $\sigma_{d,n}$ is semi-regular for $n=d,d+1,...,d+2^{m+1}-1$. It proves (a).
\end{proof}

The following table gives a visual interpretation of Theorem \ref{gamsem2}.

\begin{table}[h]
$$
\begin{array}{c|c|c|c|c|c|c|c|c|c|c|c|c|c|}
n\backslash d&2&3&4&5&6&7&8&9&10&11&12&13&14\\
\hline
2&\texttt{x}&&&&&&&&&&&&\\
3&\texttt{x}&\texttt{x}&&&&&&&&&&&\\
4&\texttt{x}&\texttt{x}&\texttt{x}&&&&&&&&&&\\
5&\texttt{x}&&\texttt{x}&\texttt{x}&&&&&&&&&\\
6&\texttt{x}&&\texttt{x}&\texttt{x}&\texttt{x}&&&&&&&&\\
7&&&\texttt{x}&&\texttt{x}&\texttt{x}&&&&&&&\\
8&&&\texttt{x}&&\texttt{x}&\texttt{x}&\texttt{x}&&&&&&\\
9&&&\texttt{x}&&\texttt{x}&&\texttt{x}&\texttt{x}&&&&&\\
10&&&\texttt{x}&&&&\texttt{x}&\texttt{x}&\texttt{x}&&&&\\
11&&&\texttt{x}&&&&\texttt{x}&&\texttt{x}&\texttt{x}&&&\\
12&&&\texttt{x}&&&&\texttt{x}&&\texttt{x}&\texttt{x}&\texttt{x}&&\\
13&&&&&&&\texttt{x}&&\texttt{x}&&\texttt{x}&\texttt{x}&\\
14&&&&&&&\texttt{x}&&&&\texttt{x}&\texttt{x}&\texttt{x}\\
\end{array}
$$
\caption{Semi-Regularity of $\sigma_{d,n}$. The values when $\sigma_{d,n}$ is semi-regular are marked with an \texttt{x}}
\end{table}

\section{Most homogeneous sequences are semi-regular} \label{conj1}

In her thesis \cite[\S 3.1]{b} Bardet states ``Nous conjecturons tout de m\`{e}me qu'une suite `tir\'{e} au hazard' sera semi-r\'{e}guli\`{e}re sur $\gft$, dans le sens ou la proportion de suites semi-r\'{e}guli\`{e}res tend vers 1 quand $n$ tend vers l'infini'' (We conjecture none the less that a sequence `chosen at random' will be $\gft$-semi-regular in the sense that the proportion of sequences that are semi-regular tends to 1 as $n$ tends to infinity). We prove this result in its broadest interpretation: namely that the proportion of homogeneous subsets of $\Bn$ that are  semi-regular tends to 1 as $n$ tends to infinity.

\begin{lem} Let $k$ be a positive integer and suppose that $\{ \lambda_1,\dots, \lambda_m\}$ spans $B_k$. Then $ \lambda_1,\dots, \lambda_m$ is a semi-regular sequence.
\end{lem}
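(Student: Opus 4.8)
The plan is to show that the index of the ideal $I=(\lambda_1,\dots,\lambda_m)$ is at most $k$, and then to observe that $D$-semi-regularity for any $D\le k$ is a vacuous condition, because every generator $\lambda_i$ already has degree $k$. (We may assume $1\le k\le n$, as otherwise $B_k=0$ and there is nothing to prove.)

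First I would pin down the graded shape of $I$. Since each $\lambda_i$ lies in $B_k$ and $B$ is strongly graded, $I$ is a graded ideal with $I_j=0$ for $j<k$, with $I_k$ equal to the $\bbF_2$-span of $\lambda_1,\dots,\lambda_m$, which is all of $B_k$ by hypothesis, and with $I_j=B_{j-k}I_k=B_{j-k}B_k=B_j$ for every $j\ge k$. In particular $I\cap B_k=B_k$, so the strongly-graded description of the index in the Definition gives $\ind(I)\le k$; in fact $HS_{B/I}(z)=\sum_{j=0}^{k-1}\binom{n}{j}z^j$, so $\ind(I)=k$, though only the inequality is needed.

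Next, set $D=\ind(\lambda_1,\dots,\lambda_m)=\ind(B/I)\le k$. To check $D$-semi-regularity I must verify that for each $i$ and each homogeneous $\mu$ with $\mu\lambda_i\in(\lambda_1,\dots,\lambda_{i-1})$ and $\deg\mu+\deg\lambda_i<D$, one has $\mu\in(\lambda_1,\dots,\lambda_i)$. But $\deg\lambda_i=k\ge D$, so the requirement $\deg\mu+k<D\le k$ forces $\deg\mu<0$, which no homogeneous element meets. Hence the hypothesis is never triggered, the sequence is trivially $D$-semi-regular, and therefore semi-regular by definition.

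There is essentially no obstacle here: the only substantive point is that the spanning hypothesis is exactly what makes $B_k\subseteq I$, hence $\ind(I)\le k$, and everything else is the vacuity of the defining inequality. As an alternative packaging one could instead invoke Theorem~\ref{bfsthm}(3): the spanning hypothesis gives $m\ge\dim B_k=\binom{n}{k}$, so in $T_{{\bf d},n}(z)=(1+z)^n/(1+z^k)^m$ the coefficient of $z^k$ is $\binom{n}{k}-m\le 0$ while the lower coefficients are $\binom{n}{j}$; thus $\left[T_{{\bf d},n}(z)\right]=\sum_{j=0}^{k-1}\binom{n}{j}z^j=HS_I(z)$ and semi-regularity follows. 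I would present the vacuous-condition argument as the main proof since it is the shortest.
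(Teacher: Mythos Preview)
Your proof is correct and follows essentially the same line as the paper's: compute $\ind(I)=k$ from the spanning hypothesis, then note that the defining inequality $\deg\mu+\deg\lambda_i<k$ is vacuous because each $\lambda_i$ already has degree $k$. Your additional remark via Theorem~\ref{bfsthm}(3) is a nice alternative packaging but is not needed.
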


\begin{proof} Notice that $\deg(\lambda_i) = k$ so $(\lambda_1, \dots, \lambda_m)\cap B_j = \{0\}$ for all $j <k$. On the other hand, $(\lambda_1, \dots, \lambda_m)\cap B_k = B_k$, so $\ind(\lambda_1, \dots, \lambda_m) = k$. For any homogeneous $f \in B$ we have $\deg(f) + \deg(\lambda_i) \geq k$ so $k$-semi-regularity is trivially satisfied.
\end{proof}

\begin{lem}  Let $\{ \lambda_1,\dots, \lambda_m\}$  be a semi-regular sequence of homogeneous elements of $B$ and let $D =
\ind ( \lambda_1,\dots, \lambda_m)$ . If $\nu \in B$ is a homogeneous element of degree greater than or equal to $D$ then $ \lambda_1,\dots, \lambda_m, \nu$ is also semi-regular.
\end{lem}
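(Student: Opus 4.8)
The plan is to notice that adjoining $\nu$ does not change the ideal at all, after which semi-regularity of the longer sequence comes essentially for free. First I would set $I = (\lambda_1,\dots,\lambda_m)$, $I' = (\lambda_1,\dots,\lambda_m,\nu)$, and $d_\nu = \deg\nu$. Since $D = \ind(\lambda_1,\dots,\lambda_m) = \ind(B/I)$ and $B$ is strongly graded, the definition of the index gives $(B/I)_k = 0$, i.e.\ $I \cap B_k = B_k$, for every $k \geq D$. Because $d_\nu \geq D$ and $\nu \in B_{d_\nu}$, this forces $\nu \in I$, hence $I' = I$. In particular $HS_{I'}(z) = HS_I(z)$ and $\ind(\lambda_1,\dots,\lambda_m,\nu) = \ind(I') = \ind(I) = D$.

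Next, recall that a sequence is semi-regular exactly when it is $E$-semi-regular for $E$ equal to the index of that sequence; since we have just shown $\ind(\lambda_1,\dots,\lambda_m,\nu) = D$, it suffices to verify that $\lambda_1,\dots,\lambda_m,\nu$ is $D$-semi-regular. For each $i$ with $1 \leq i \leq m$ the required implication — if $\mu$ is homogeneous with $\mu\lambda_i \in (\lambda_1,\dots,\lambda_{i-1})$ and $\deg\mu + \deg\lambda_i < D$, then $\mu \in (\lambda_1,\dots,\lambda_i)$ — is literally one of the conditions in the $D$-semi-regularity of $\lambda_1,\dots,\lambda_m$, which holds because that sequence is semi-regular with index $D$. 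The only new condition is the one for $\nu$ sitting in position $m+1$: it quantifies over homogeneous $\mu$ with $\deg\mu + \deg\nu < D$, but $\deg\nu \geq D$ and $\deg\mu \geq 0$ give $\deg\mu + \deg\nu \geq D$, so no such $\mu$ exists and the condition is vacuously satisfied. Therefore $\lambda_1,\dots,\lambda_m,\nu$ is $D$-semi-regular, hence semi-regular.

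I expect no real obstacle here; the whole content is the observation $\nu \in I$. If one instead prefers to go through the Hilbert-series characterization of Theorem \ref{bfsthm}(3), the argument runs: $HS_{I'}(z) = HS_I(z) = [T_{{\bf d},n}(z)]$ with $\ind(T_{{\bf d},n}(z)) = D$ (established inside the proof of Theorem \ref{bfsthm}), and then one checks that multiplying $T_{{\bf d},n}(z)$ by $1/(1+z^{d_\nu})$ with $d_\nu \geq D$ leaves the truncation at $D$ unchanged, so that $[T_{(d_1,\dots,d_m,d_\nu),n}(z)] = [T_{{\bf d},n}(z)] = HS_{I'}(z)$, whence Theorem \ref{bfsthm}(3) again yields semi-regularity. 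The only spot needing a line of care in that alternative is the coefficient in degree $D$ when $d_\nu = D$, where the extra contribution $-\,t_{{\bf d},n}(0) = -1$ only strengthens the inequality; either route closes the proof.
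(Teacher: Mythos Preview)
Your argument is correct and is essentially the same as the paper's: both observe that $\deg(\nu)\ge D$ forces $(\lambda_1,\dots,\lambda_m)=(\lambda_1,\dots,\lambda_m,\nu)$, hence the index stays $D$, and the extra condition for $\nu$ is vacuous since $\deg(\mu)+\deg(\nu)\ge D$. Your write-up is in fact a bit more careful than the paper's in justifying $\nu\in I$ via the strongly-graded characterization of the index.
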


\begin{proof} Since $\deg(\nu) \geq D$ we have 
$$ ( \lambda_1,\dots, \lambda_m) = ( \lambda_1,\dots, \lambda_m, \nu)
$$
so the degree of regularity does not change and again we see for any homogeneous $f \in B$ that $\deg(f) + \deg(\nu) \geq D$. So $D$-semi-regularity is again trivially verified.
\end{proof}

\begin{prop} \label{spansets} Let $V_n$ be an $n$-dimensional $\gft$-vector space and let $\mcP(V_n\backslash \{0\})$ be the set of subsets of $V_n\backslash \{0\}$. Let $S_n$ be the number of subsets of $V_n\backslash \{0\}$ that span $V_n$. Then
$$
\lim_{n\to \infty} \frac{S_n}{|\mcP(V_n\backslash \{0\})|} =1
$$
Equivalently the probability of randomly picked subset of $V_n$ being a spanning set goes to $1$ as $n\to \infty$.
\end{prop}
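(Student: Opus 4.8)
The plan is to use a union bound over hyperplanes. The key observation is that a subset $S \subseteq V_n \setminus \{0\}$ fails to span $V_n$ if and only if $S$ is contained in some proper subspace, and every proper subspace is contained in a hyperplane (a subspace of dimension $n-1$). So the non-spanning subsets are exactly those contained in a hyperplane, and it suffices to count (an upper bound for) these.

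First I would recall the standard count: the number of hyperplanes in $V_n = \gft^n$ is $2^n - 1$ (each corresponds to the kernel of a nonzero linear functional, up to scalar, and over $\gft$ there are no nontrivial scalars). Each hyperplane $H$ has $2^{n-1}$ elements, hence $2^{n-1} - 1$ nonzero elements, and therefore exactly $2^{2^{n-1}-1}$ subsets of $V_n\setminus\{0\}$ are contained in $H$. By the union bound, the number of non-spanning subsets is at most $(2^n - 1)\, 2^{2^{n-1}-1}$.

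Next I would divide by the total number of subsets, $|\mcP(V_n\setminus\{0\})| = 2^{2^n - 1}$, to get
$$
1 - \frac{S_n}{|\mcP(V_n\setminus\{0\})|} \;\leq\; \frac{(2^n - 1)\, 2^{2^{n-1}-1}}{2^{2^n-1}} \;=\; (2^n - 1)\, 2^{-2^{n-1}}.
$$
Since the right-hand side is a polynomially-growing factor times a doubly-exponentially decaying factor, it tends to $0$ as $n \to \infty$, and therefore $S_n / |\mcP(V_n\setminus\{0\})| \to 1$, which is the claim; the final sentence about the probability interpretation is then immediate since each subset is equally likely.

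I do not anticipate a real obstacle here: the argument is a clean union bound, and the only point requiring a moment's care is reducing ``contained in some proper subspace'' to ``contained in some hyperplane'' (so that the count of offending subspaces is the manageable $2^n - 1$ rather than the full subspace lattice), together with checking that the denominator's double exponential swamps the number of hyperplanes.
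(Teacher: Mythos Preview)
Your proposal is correct and follows essentially the same approach as the paper: a union bound over the $2^n-1$ hyperplanes, using that each contains $2^{2^{n-1}-1}$ subsets of $V_n\setminus\{0\}$, and observing that the resulting bound $(2^n-1)\,2^{-2^{n-1}}$ tends to $0$. Your version is in fact slightly more careful than the paper's in tracking the ``$-1$'' from excluding the zero vector, but the arguments are otherwise identical.
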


\begin{proof} We find an upper bound on the probability that a randomly picked subset $A$ does not span $V_n$. Note
that if a subset does not span $V_n$ then it must be contained in some $n-1$ dimensional subspace of $V_n$.
The probability that $A$ is contained in a particular $n-1$ dimensional subspace is $2^{2^{n-1}}/2^{2^{n}}=1/ 2^{2^{n-1}}$. There are $2^{n}-1$ such
subspaces, summing over all subsets we have that the probability is less than $(2^{n}-1)/2^{2^{n-1}}$. Taking a limit we have
$$
\lim_{n\to \infty} \frac{2^{n}-1}{2^{2^{n-1}}}=0
$$
as desired.
\end{proof}

\begin{thm} \label{srgen} Let $d$ be a positive integer and let $P_d(n)$ be the proportion of homogeneous sequences of degree greater than or equal to $d$ in $B^{(n)}$ that are  semi-regular. Then  
$$
\lim_{n\to \infty} P_d(n) =1
$$
\end{thm}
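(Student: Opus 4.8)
The plan is to reduce the statement to Proposition \ref{spansets} via the two lemmas immediately preceding it, by showing that a random homogeneous sequence of degree $\geq d$ is semi-regular \emph{for trivial reasons} with probability tending to $1$.

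First I would reinterpret the objects being counted. A homogeneous sequence of degree $\geq d$ in $\Bn$ is the same datum as a subset $A$ of the disjoint union $\bigsqcup_{k=d}^{n}(\Bn_k\setminus\{0\})$, since semi-regularity depends only on the underlying set (Theorem \ref{bfsthm}(4)). Hence picking such a sequence uniformly at random is the same as picking, independently for each $k$ with $d\leq k\leq n$, a uniformly random subset $A_k\subseteq \Bn_k\setminus\{0\}$; in particular the degree-$d$ layer $A_d$ is a uniformly random subset of $\Bn_d\setminus\{0\}$.

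The key step is the claim: if $A_d$ spans $\Bn_d$, then $A$ is semi-regular. I would prove this using the two lemmas just established in this section. By the first one (applied with $B=\Bn$ and the degree $d$), $A_d$ is already a semi-regular sequence and $\ind(A_d)=d$. The ideal generated by $A_d$ contains $\Bn_d$, hence all homogeneous elements of $\Bn$ of degree $\geq d$; in particular it contains every element of $A\setminus A_d$, so adjoining those elements one at a time leaves the ideal (hence the index) unchanged, and each such element has degree $\geq d=\ind$. Thus the second lemma applies at each step and $A$ remains semi-regular. (Equivalently: $\ind(A)=d$, and since every member of $A$ has degree $\geq d$, the hypothesis $\deg(\mu)+\deg(\lambda_i)<d$ in the definition of $d$-semi-regularity can never hold, so $d$-semi-regularity is vacuous.) It follows that $P_d(n)\geq \Pr[A_d\text{ spans }\Bn_d]$.

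Finally I would close the argument by applying Proposition \ref{spansets} to the $\fld_2$-vector space $\Bn_d$, which has dimension $\binom{n}{d}\to\infty$ as $n\to\infty$ (here $d$ is fixed): the probability that the random subset $A_d$ spans $\Bn_d$ tends to $1$, whence $P_d(n)\to 1$. The computations involved are entirely routine; the only points that need a word of care are the independence decomposition of a uniform random subset of a disjoint union (so that $A_d$ is genuinely uniform in $\Bn_d\setminus\{0\}$) and the fact that Proposition \ref{spansets} is really a statement about the ambient dimension tending to infinity, so it may be applied with dimension $\binom{n}{d}$ in place of $n$ since its bound $(2^N-1)/2^{2^{N-1}}$ depends only on $N=\dim$. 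Beyond correctly chaining the two lemmas to see that a spanning degree-$d$ layer forces semi-regularity of the whole sequence, there is no real obstacle.
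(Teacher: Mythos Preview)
Your proposal is correct and follows essentially the same approach as the paper: both arguments lower-bound $P_d(n)$ by the probability that the degree-$d$ part of a random homogeneous subset spans $\Bn_d$, invoke the two preceding lemmas to deduce that such a subset (together with any higher-degree elements) is semi-regular, and then apply Proposition \ref{spansets} with the ambient dimension $\dim \Bn_d = \binom{n}{d}\to\infty$. Your probabilistic phrasing and explicit independence decomposition are cosmetic repackagings of the paper's counting argument $S_d(n)2^{h_{d+1}(n)}/2^{h_d(n)} = S_d(n)/2^{|B_d|-1}$.
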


\begin{proof} Let $h_d(n)$ be the number of homogeneous elements of $B=\grft$ of degree greater than or equal to $d$. Then there are $2^{h_d(n)}$ possible homogeneous sequences (note that we are ignoring order in the sequence).
From Proposition 3.2 and Proposition 3.3 we know that if we pick a spanning set
of $B_d$ and add any elements from $B_i$ where $i > d$ then this is a semi-regular sequence. The number of homogeneous elements of $B$ of degree greater than $d$ is 
$$h_{d+1}(n) = h_d(n)- (|B_d|-1).$$
Let $S_d(n)$ be the number of spanning sets of $B_d$. Then a lower
bound on the proportion of semi-regular sequences is given by
$$\frac{S_d(n)2^{h_{d+1}(n)}}{2^{h_d(n)}}= \frac{S_d(n)}{2^{|B_d|-1} }
$$
Since $d\geq 1$, $\dim B_d \to \infty$ as $n\to \infty$ and so Proposition \ref{spansets}, implies that 
$$
\lim_{n \to \infty} \frac{S_d(n)}{2^{|B_d|-1} } =1.
$$
\end{proof}

\section{Non-Existence of Semi-Regular Sequences over $\bbF_2$}

In this section we prove the following theorem.

\begin{thm}
Let $d_1, \dots,d_m$ be a sequence of integers with $d_i \geq 2$ for some $1\leq i \leq m$. Then there exists an $N$ such that for all $n\geq N$, there cannot be a semi-regular sequence $\lambda_1,\dots,\lambda_m$ of homogeneous polynomials of degrees  $d_1, \dots,d_m$.
\end{thm}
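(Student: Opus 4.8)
The plan is to show that for all sufficiently large $n$ one has $\ind(\lambda_1,\dots,\lambda_m)>\ffd(\lambda_i)$ for \emph{every} sequence $\lambda_1,\dots,\lambda_m$ of homogeneous elements of $\Bn$ of degrees $d_1,\dots,d_m$, where $i$ is a fixed index with $d_i\geq 2$; by Theorem~\ref{indffd2} this gives at once that no such sequence is semi-regular. Bounding the first fall degree is free: the rank of $\lambda_i$ is at most $n$, so Theorem~\ref{ffdelement} gives $\ffd(\lambda_i)\leq (n+d_i+2)/2$. Hence everything rests on the complementary lower bound $\ind(\lambda_1,\dots,\lambda_m)\geq n-D+1$, where $D=d_1+\cdots+d_m$, which is the natural generalization of Proposition~\ref{indlam}. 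Granting it, the inequality $n>2D+d_i$ forces $n-D+1>(n+d_i+2)/2\geq\ffd(\lambda_i)$, so $N=2D+d_i$ will do. (For $m=1$ this recovers Theorem~\ref{thmbound}.)

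To prove $\ind(\lambda_1,\dots,\lambda_m)\geq n-D+1$ I would proceed in two steps. First, by induction on $m$, the set $\bigcap_{j=1}^m\Ann(\lambda_j)$ of elements annihilating all of $\lambda_1,\dots,\lambda_m$ contains a nonzero homogeneous element of degree at most $D$: for $m=0$ take the constant $1$; for the inductive step take a nonzero $b\in\bigcap_{j<m}\Ann(\lambda_j)$ with $\deg b\leq d_1+\cdots+d_{m-1}$, and note that if $b\lambda_m=0$ then $b$ works, while otherwise $b\lambda_m\neq 0$ is annihilated by $\lambda_m$ (since $\lambda_m^2=0$) and by each $\lambda_j$ with $j<m$ (since $\Bn$ is commutative, $b\lambda_m\lambda_j=(b\lambda_j)\lambda_m=0$), so $b\lambda_m\in\bigcap_{j\leq m}\Ann(\lambda_j)$ has degree $\deg b+d_m\leq D$. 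Second, convert such an annihilator into a monomial outside $I=(\lambda_1,\dots,\lambda_m)$: pick a nonzero $b\in\bigcap_j\Ann(\lambda_j)$ of degree $e\leq D$ and a monomial $N\in\Supp(b)$, and after renumbering the variables assume $N=x_1\cdots x_e$. If $I\cap\Bn_{n-e}=\Bn_{n-e}$, then $x_{e+1}\cdots x_n\in I$, so $b\cdot x_{e+1}\cdots x_n=0$ because $b$ annihilates $I$; but for a degree-$e$ monomial $M$ the product $M\cdot x_{e+1}\cdots x_n$ is nonzero only when $M$ involves only $x_1,\dots,x_e$, i.e. only when $M=N$, and then $N\cdot x_{e+1}\cdots x_n=x_1\cdots x_n\neq 0$, so $b\cdot x_{e+1}\cdots x_n=x_1\cdots x_n\neq 0$, a contradiction. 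Hence $I\cap\Bn_{n-e}\subsetneq\Bn_{n-e}$, so $\ind(I)>n-e\geq n-D$, as required. (This second step is just the perfect pairing $\Bn_k\times\Bn_{n-k}\to\Bn_n\cong\gft$ given by multiplication — the Gorenstein property of $\Bn$ — made explicit.)

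The one genuinely new ingredient is the lower bound on $\ind(\lambda_1,\dots,\lambda_m)$, and within it the only point requiring an idea is the inductive construction of a common annihilator of bounded degree: the trick that if $b$ kills $\lambda_1,\dots,\lambda_{m-1}$ but not $\lambda_m$ then $b\lambda_m$ kills all of them. Everything else — the rank/first-fall-degree estimate from Theorem~\ref{ffdelement}, the monomial argument above, and the arithmetic $n>2D+d_i$ — is routine, so I do not anticipate a serious obstacle once this lemma is in place. (A sequence in which some $\lambda_j$ vanishes has no well-defined degree vector and may be excluded, or is trivially non-semi-regular.)
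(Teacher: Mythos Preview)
Your argument is correct and substantially simpler than the paper's. The paper never bounds $\ind(\lambda_1,\dots,\lambda_m)$ directly for an arbitrary sequence; instead it works entirely with the formal series $T_{\mathbf d,n}(z)=(1+z)^n/\prod_j(1+z^{d_j})$ and proves, through a long chain of lemmas (Lemmas~\ref{la}--\ref{lemg} and Theorems~\ref{indlem}--\ref{thmind}), that $\tau_{\mathbf d}(n)=\ind T_{\mathbf d,n}(z)$ is bounded below by $rn+c$ for some $r>1/2$. Only then does it invoke Theorem~\ref{bfsthm}(3) to conclude that \emph{if} the sequence were semi-regular, its index would equal $\tau_{\mathbf d}(n)>(n+d_j+2)/2\geq\ffd(\lambda_j)$, contradicting Theorem~\ref{indffd2}. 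Your route bypasses all of the series analysis: the inductive annihilator trick (if $b$ kills $\lambda_1,\dots,\lambda_{m-1}$ then either $b$ or $b\lambda_m$ kills all $m$) together with the perfect pairing $\Bn_e\times\Bn_{n-e}\to\Bn_n$ gives $\ind(I)\geq n-D+1$ for \emph{every} sequence of the given degrees, with no appeal to semi-regularity or Hilbert series. This not only shortens the proof dramatically but also yields the explicit bound $N=2D+d_i$ (recovering $N=3d$ when $m=1$), whereas the paper's $N$ is non-effective, buried inside the ``there exists $M$'' of Lemma~\ref{lembino}. The paper's machinery does produce finer information about $\tau_{\mathbf d}(n)$ that might be of independent interest, but for the theorem as stated your approach is strictly more efficient.
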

This theorem implies that Conjecture 2 in \cite{bfs2} is false. 

The idea of the proof is the following. For $\mathbf{d}=(d_1,\dots,d_m)$, we define the function
\begin{equation}\label{eq:eqtao}
\tau_{\mathbf{d}}(n)=\ind \frac{(1+z)^n}{\prod_{i=1}^m(1+z^{d_i})}.
\end{equation}
We show that this function is bounded below by a linear function $g(n)=rn+c$, with $r>1/2$. Suppose that for some $j$ we have that $d_j\geq2$. Since $r>1/2$ then there exists $N$ such that for all $n\geq N$
$$\tau_{\bfd}(n)>\frac{n}{2}+\frac{d_j}{2}+1.$$
Suppose $\lambda_1,\dots,\lambda_m$ is a semi-regular sequence of homogeneous polynomials of degrees $d_1,\dots,d_m$ in $B^{(n)}$, $n\geq N$. Thus, by Theorem \ref{bfsthm}, $\ind(\lambda_1,\dots,\lambda_m)=\tau_{\bfd}(n)>(n+d_j+2)/2$. Also, by Theorem \ref{ffdelement} we have that 
$$D_{\mathrm{ff}}(\lambda_j)\leq \frac{n+d_j+2}{2}.$$
Therefore, $D_{\mathrm{ff}}(\lambda_j)<\ind(\lambda_1,\dots,\lambda_m)$, but Theorem\ref{indffd2} tells us that this is not possible for a semi-regular sequence.

\begin{lem}
\label{la}
 Let $f:\mathbb{N}\rightarrow\mathbb{R}$ be a non-decreasing function. If there exist $n_0$, $N\in\mathbb{N}$, and $A\in \mathbb{R}$, such that for all $n\geq n_0$ we have
$$f(n+N)\geq f(n)+A$$
then there exists a constant $c$ such that 
$$f(n)\geq (A/N)n+c$$
for all natural number $n$.
\end{lem}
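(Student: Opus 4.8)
The plan is to iterate the hypothesis along arithmetic progressions of common difference $N$ and then patch together the $N$ residue classes modulo $N$.

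\emph{Step 1: iterate the recurrence.} A trivial induction on $k$ shows that from $f(n+N)\ge f(n)+A$ for all $n\ge n_0$ one obtains
$$f(n+kN)\ge f(n)+kA$$
for every $n\ge n_0$ and every integer $k\ge 0$: at each step the intermediate argument $n+iN$ is still $\ge n_0$, so the hypothesis applies there.

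\emph{Step 2: a linear lower bound for $n\ge n_0$.} Given $n\ge n_0$, I would write $n=(n_0+j)+kN$, where $j\in\{0,1,\dots,N-1\}$ is the residue of $n-n_0$ modulo $N$ and $k=\lfloor (n-n_0)/N\rfloor\ge 0$. Applying Step~1 with the base point $n_0+j\ (\ge n_0)$ gives $f(n)\ge f(n_0+j)+kA$. Since $x-1<k\le x$ for $x=(n-n_0)/N$, a short case check (separately for $A\ge 0$ and for $A<0$, because multiplying an inequality by a negative number reverses it) yields $kA\ge \tfrac{A}{N}(n-n_0)-|A|$. Setting $C_0=\min_{0\le j<N} f(n_0+j)$, a minimum of finitely many real numbers, I get
$$f(n)\ge \frac{A}{N}\,n+\Bigl(C_0-|A|-\frac{A}{N}\,n_0\Bigr)\qquad\text{for all }n\ge n_0.$$

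\emph{Step 3: absorb the finitely many small $n$.} There are only finitely many $n\in\mathbb{N}$ with $n<n_0$, so $\min_{n<n_0}\bigl(f(n)-\tfrac{A}{N}n\bigr)$ is a well-defined real number (and if $n_0=0$ this case is vacuous and Step~2 already suffices). Taking
$$c=\min\Bigl\{\,C_0-|A|-\tfrac{A}{N}n_0,\ \min_{n<n_0}\bigl(f(n)-\tfrac{A}{N}n\bigr)\,\Bigr\}$$
gives $f(n)\ge (A/N)n+c$ for every natural number $n$, as required.

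There is no serious obstacle here. The only points needing care are the floor estimate $kA\ge\tfrac{A}{N}(n-n_0)-|A|$, where the sign of $A$ genuinely matters, and the fact that the recurrence is assumed only for $n\ge n_0$, which forces the iteration in Step~1 to start from a base point in $\{n_0,\dots,n_0+N-1\}$ rather than from $0$. It is worth noting that monotonicity of $f$ is never used in this argument; it is presumably included in the statement only because it holds in the intended application, where $f=\tau_{\bfd}$.
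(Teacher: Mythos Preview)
Your proof is correct and follows the same core idea as the paper: iterate the recurrence along arithmetic progressions of step $N$, then adjust by a constant to cover the finitely many small $n$. The differences are minor but worth noting. The paper uses the monotonicity of $f$ to replace the base value $f(n_0+b)$ by the single number $f(n_0)$, whereas you bypass monotonicity entirely by taking $C_0=\min_{0\le j<N} f(n_0+j)$; your remark that monotonicity is unnecessary is therefore justified. You are also more careful with the sign of $A$: the paper's step ``$b<N$, so $g(m)<f(n_0)+Al$'' tacitly assumes $A>0$ (which is all that is needed in the application), while your bound $kA\ge\tfrac{A}{N}(n-n_0)-|A|$ is valid for either sign. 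So your argument is slightly more general, at the cost of a mildly larger constant.
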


\begin{proof}
Consider the function $g(n)=f(n_0)+(A/N)(n-(n_0+N))$. Let us show that for all $n\geq n_0$ we have that $f(n)>g(n)$. Let $m\geq n_0$. Write $m-n_0=lN+b$, where $b$ is an integer $b<N$. By hypothesis we have that
$$f(m)=f(n_0+b+lN)\geq f(n_0+b)+lA.$$
Since $f$ is non-decreasing we have that $f(m)\geq f(n_0)+lA$. Now, 
\begin{align*}
g(m)&=g(n_0+b+lN)\\
&=f(n_0)+(A/N)(n_0+b+lN-n_0-N)\\
&=f(n_0)+A(l-1)+(A/N)b.
\end{align*}
But $b<N$, so $g(m)<f(n_0)+Al\leq f(m)$. Thus, for all $n\geq n_0$ we have that $f(n)>g(n)$. Note that $g$ is defined as $$g(n)=(A/N)n+k,$$
where $k=f(n_0)-(A/N)(n_0+N)$. So, for all $n\geq n_0$ we have that $f(n)>(A/N)n+k$. Since we have a finite number natural numbers less that $n_0$ then for an  appropriate choice of a constant $c$ we have that for all natural number $n$  $f(n)>(A/N)n+c$.  
\end{proof} 

\begin{lem}
For any $u$ between $0$ and $n$, and any $a_j\in \mathbb{R}$
\begin{equation*}
\sum_{j=0}^na_j\bn{n}{j}=\sum_{j=0}^{u-d}\gamma(n,j,d)(a_j+a_{j+d})+\sum_{j=u-d+1}^u\gamma(n,j,d)a_j+\sum_{j=u+1}^n\bn{n}{j}a_j
\end{equation*}
\end{lem}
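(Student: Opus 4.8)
The idea is to push everything back to the single recurrence satisfied by the coefficients $\gamma(n,j,d)$ of $(1+z)^n/(1+z^d)$. Multiplying \eqref{eq:eq1} through by $1+z^d$ and comparing coefficients of $z^k$ gives
\[
\binom{n}{k}=\gamma(n,k,d)+\gamma(n,k-d,d)
\]
for every integer $k$, under the conventions $\binom{n}{k}=0$ when $k<0$ or $k>n$ and $\gamma(n,k,d)=0$ when $k<0$. This is the only nontrivial ingredient; everything else is bookkeeping with index ranges.

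First I would split the left-hand side at the index $u$, writing $\sum_{j=0}^{n}a_j\binom{n}{j}=\sum_{j=0}^{u}a_j\binom{n}{j}+\sum_{j=u+1}^{n}a_j\binom{n}{j}$, and leave the second piece untouched, since it is literally the last term on the right-hand side. For the first piece I substitute the recurrence:
\[
\sum_{j=0}^{u}a_j\binom{n}{j}=\sum_{j=0}^{u}a_j\gamma(n,j,d)+\sum_{j=0}^{u}a_j\gamma(n,j-d,d).
\]
In the second sum I set $i=j-d$; since $\gamma(n,i,d)=0$ for $i<0$, the surviving terms are exactly those with $0\le i\le u-d$, so (renaming the index) this sum equals $\sum_{j=0}^{u-d}a_{j+d}\gamma(n,j,d)$. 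Splitting the first sum on the right above as $\sum_{j=0}^{u-d}a_j\gamma(n,j,d)+\sum_{j=u-d+1}^{u}a_j\gamma(n,j,d)$ and combining the two sums that run over $0\le j\le u-d$ yields $\sum_{j=0}^{u-d}\gamma(n,j,d)(a_j+a_{j+d})+\sum_{j=u-d+1}^{u}\gamma(n,j,d)a_j$; adding back the untouched tail $\sum_{j=u+1}^{n}a_j\binom{n}{j}$ gives precisely the claimed formula.

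The only point requiring a moment's care is the degenerate range $u<d$: then $\sum_{j=0}^{u-d}$ is empty and $\sum_{j=u-d+1}^{u}$ nominally runs over some negative indices, both of which are absorbed by the convention $\gamma(n,j,d)=0$ for $j<0$, and the identity collapses to $\sum_{j=0}^{u}a_j\binom{n}{j}=\sum_{j=0}^{u}a_j\gamma(n,j,d)+\sum_{j=u+1}^{n}a_j\binom{n}{j}$, which is again immediate from the recurrence. So I anticipate no real obstacle beyond stating the index conventions precisely enough that the reindexing step and the empty-range cases are unambiguous.
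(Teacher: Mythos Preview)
Your proof is correct and follows exactly the same route as the paper: both use the recurrence $\binom{n}{j}=\gamma(n,j,d)+\gamma(n,j-d,d)$, substitute it into $\sum_{j=0}^{u}a_j\binom{n}{j}$, shift the index in the $\gamma(n,j-d,d)$ sum, and regroup. One small slip in your aside on the degenerate case $u<d$: the collapsed identity should have left-hand side $\sum_{j=0}^{n}a_j\binom{n}{j}$ (not $\sum_{j=0}^{u}$), or equivalently drop the tail term on the right; this is cosmetic and does not affect the main argument.
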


\begin{proof}
Note that by definition of $\gamma(n,j,d)$, we have that 
$$\bn{n}{j}=\gamma(n,j,d)+\gamma(n,j-d,d).$$ Thus
\begin{align*} 
\sum_{j=0}^ua_j\bn{n}{j}&=\sum_{j=0}^u(\gamma(n,j,d)+\gamma(n,j-d,d))a_j
\\
&=\sum_{j=0}^u\gamma(n,j,d)a_j+\sum_{j=0}^u\gamma(n,j-d,d)a_j
\\
&=\sum_{j=0}^u\gamma(n,j,d)a_j+\sum_{j=0}^{u-d}\gamma(n,j,d)a_{j+d}
\\
&=\sum_{j=0}^{u-d}\gamma(n,j,d)(a_j+a_{j+d})+\sum_{j=u-d+1}^u\gamma(n,j,d)a_j
\end{align*}
\end{proof}

\begin{lem}\label{lemind}
Let $N$, $d$ be natural numbers. Let
\begin{equation*}
\beta(z)=\sum_{j=0}^{\infty}b_jz^j.
\end{equation*} 
Suppose that 
$$\ind \beta(z)\geq 1$$
and $$b_i+b_{i-d}\geq 0$$
for all 
$$0 \leq i \leq \ind \beta(z)+ \ind \frac{(1+z)^N}{1+z^d}-d-1.$$\\
Then
\begin{equation*}
\ind (1+z)^N \beta(z) \geq \ind \beta(z)+\ind \frac{(1+z)^N}{1+z^d}-d.
\end{equation*}
\end{lem}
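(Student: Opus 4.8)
I abbreviate $D_\beta=\ind\beta(z)$ and $D_T=\ind\frac{(1+z)^N}{1+z^d}$, and set $M=D_\beta+D_T-d$; the claim is exactly that the coefficient $c_k$ of $z^k$ in $(1+z)^N\beta(z)$ satisfies $c_k>0$ for every $k$ with $0\le k\le M-1$. (If $D_\beta=\infty$ all $b_j$ are positive and the statement is immediate, so I assume $D_\beta<\infty$.) Before anything else I would record the auxiliary bound $1\le D_T\le N+1$, which is what will make the argument uniform in $k$. Finiteness and $D_T\ge1$ are clear (the $z^0$ coefficient is $\binom{N}{0}=1$), and $D_T\le N+1$ follows from the recursion $\binom{N}{j}=\gamma(N,j,d)+\gamma(N,j-d,d)$ used already above: if $\gamma(N,j,d)>0$ held for all $0\le j\le N+1$, then at $j=N+1$, since $\binom{N}{N+1}=0$, we would get $\gamma(N,N+1,d)=-\gamma(N,N+1-d,d)$, which is impossible because the right-hand side is either $0$ (when $N+1<d$, as then $\gamma(N,N+1,d)=\binom{N}{N+1}=0$ directly) or negative (when $N+1\ge d$, as then $0\le N+1-d\le N+1$ forces $\gamma(N,N+1-d,d)>0$ under the supposition).

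Now fix $k$ with $0\le k\le M-1$, write $c_k=\sum_{j=0}^{N}\binom{N}{j}b_{k-j}$ with the convention $b_i=0$ for $i<0$, and apply the preceding lemma with $n=N$, $a_j=b_{k-j}$, and $u=D_T-1$ (admissible since $0\le u\le N$). This splits $c_k$ as
\[
c_k=\sum_{j=0}^{D_T-1-d}\gamma(N,j,d)\bigl(b_{k-j}+b_{k-j-d}\bigr)\;+\;\sum_{j=D_T-d}^{D_T-1}\gamma(N,j,d)\,b_{k-j}\;+\;\sum_{j=D_T}^{N}\binom{N}{j}\,b_{k-j},
\]
where degenerate ranges (e.g. $D_T\le d$, so the first block is empty) are read with the usual conventions. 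I claim every summand here is $\ge0$: the coefficients $\gamma(N,j,d)$ that appear have index $j\le D_T-1$, hence are positive by definition of $D_T$, and $\binom{N}{j}\ge0$; as for the $b$-factors, in the first block $i:=k-j\le k\le M-1$ so $b_{k-j}+b_{k-j-d}\ge0$ by hypothesis (and trivially $\ge0$ when $i<0$), in the second block $k-j\le D_\beta-1<D_\beta$ (using $k\le M-1$ and $j\ge D_T-d$, or $k\le D_\beta-1$ directly when $D_T\le d$), and in the third block $k-j\le k-D_T\le D_\beta-d-1<D_\beta$; in all cases the relevant $b_i$ with $i\ge0$ lies in the range where $b_i>0$, and is $0$ otherwise.

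It remains to produce one strictly positive summand. If $k\le N$, the coefficient $b_0$ occurs, namely as $b_{k-j}$ at $j=k$: when $k\le D_T-1$ this $j$ lies in the first or second block and the summand there is $\gamma(N,k,d)\,b_0>0$ (since $k<D_T$ and $b_0>0$), while when $D_T\le k\le N$ it lies in the third block and the summand is $\binom{N}{k}\,b_0>0$. If instead $k>N$, then $b_0$ does not occur, but $k\le M-1\le D_\beta+N-d$ by the bound $D_T\le N+1$, so $0<k-N\le D_\beta-d<D_\beta$ and hence $b_{k-N}>0$; this appears at $j=N$, which is in the third block when $D_T\le N$ (summand $\binom{N}{N}b_{k-N}=b_{k-N}>0$) and is the top term $j=u$ of the second block when $D_T=N+1$ (summand $\gamma(N,N,d)\,b_{k-N}>0$, as $N<D_T$). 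Thus $c_k$ is a sum of nonnegative terms with at least one positive term, so $c_k>0$ for all $k\le M-1$, i.e. $\ind(1+z)^N\beta(z)\ge D_\beta+D_T-d$.

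The part I expect to need the most care is the positivity step in the last paragraph: bookkeeping which block of the decomposition carries the "$b_0$" (or, when $k>N$, the "$b_{k-N}$") contribution across the degenerate regimes $D_T\le d$ and $D_T=N+1$, and, underlying it, the auxiliary inequality $D_T\le N+1$ — without which the case $k>N$ (which does occur when $D_\beta>d$) could not be controlled. The nonnegativity of the three blocks, by contrast, is a routine range check against the hypothesis $b_i+b_{i-d}\ge0$ and the definitions of the two indices.
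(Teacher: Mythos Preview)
Your proof is correct and follows the same route as the paper: apply the preceding decomposition lemma with $u=D_T-1$, check that each of the three blocks is nonnegative, and conclude $c_k>0$ from a single strictly positive summand. Your handling of the strict-positivity step is in fact more careful than the paper's---you locate the $b_0$ (or, when $k>N$, the $b_{k-N}$) contribution explicitly and justify the latter via the auxiliary bound $D_T\le N+1$---whereas the paper first disposes of $k<D_\beta$ by the trivial observation that $(1+z)^N$ has nonnegative coefficients and then asserts the second and third blocks together are strictly positive for $k\ge D_\beta$.
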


\begin{proof}
Let 
\begin{equation*}
(1+z)^N \beta(z)=\sum c_iz^i
\end{equation*}
and let $l = \ind \beta(z)$ and $s = \ind (1+z)^N/(1+z^d)$.
Suppose that $l\geq 1$
and $$b_i+b_{i-d}\geq 0$$
for all $0 \leq i \leq l+s-d-1.$

We want to show that
$$\ind (1+z)^N \beta(z) \geq l+s-d.$$
That is, $c_i>0$ for $0\leq i \leq l+s-d-1$.
Clearly $\ind (1+z)^N\beta(z)\geq l$. It remains to show that $c_{l+i}>0$, for $i = 0, \dots, s-d-1$. For  $0\leq i \leq s-d-1$ we have by above lemma that
\begin{align*}
c_{l+i} &= \sum_{j=0}^{N} b_{l+i-j}\bn{N}{j} \\
&= \sum_{j=0}^{s-1-d} \gamma(N,j,d) (b_{l+i-j} +b_{l+i-j-d}) \\
&+\sum_{j=s-d}^{s-1} \gamma(N,j,d) b_{l+i-j} + \sum_{j=s}^N \bn{N}{j} b_{l+i-j}.
\end{align*}
For $j= 0, \dots, s-1$, we have that $\gamma(N,j,d) > 0$, since $s = \ind (1+z)^N/(1+z^d)$. Also, $b_{l+i-j} +b_{l+i-j-d}\geq 0$, since $l+i-j \leq l+s-d-1$. So 
\begin{equation*}
\sum_{j=0}^{s-1-d} \gamma(N,j,d) (b_{l+i-j} +b_{l+i-j-d})\geq 0. 
\end{equation*}
Finally, if $j \geq s-d$, then $l + i-j \leq l + (s-d-1) -(s-d)=l-1$, so $%
b_{l+i-j}>0$. Hence 
\begin{equation*}
\sum_{j=s-d}^{s-1} \gamma(N,j,d) b_{l+i-j} + \sum_{j=s}^N \bn{N}{j} b_{l+i-j}>0. 
\end{equation*}
Thus, we have shown that $c_{i} >0$ for $0\leq i\leq l+s-d-1$. So 
\begin{equation*}
\ind (1+z)^N\beta(z) \geq l+s-d.
\end{equation*}
\end{proof}

\begin{thm}
\label{indlem} If $\ind \alpha(z)\geq 1$ and
\begin{equation*}
\ind \alpha(z) \geq \ind \frac{\alpha(z)}{1+z^d} +\ind \frac{(1+z)^N}{1+z^d}%
-d
\end{equation*}
then 
\begin{equation*}
\ind \frac{(1+z)^N\alpha(z)}{1+z^d} \geq \ind \frac{\alpha(z)}{1+z^d} +\ind 
\frac{(1+z)^N}{1+z^d}-d 
\end{equation*}
\end{thm}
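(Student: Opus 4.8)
The plan is to reduce directly to Lemma \ref{lemind} by setting $\beta(z) = \alpha(z)/(1+z^d)$, so that $(1+z)^N\beta(z) = (1+z)^N\alpha(z)/(1+z^d)$ and $\ind\beta(z) = \ind \frac{\alpha(z)}{1+z^d}$. Write $\alpha(z) = \sum_j a_jz^j$ and $\beta(z) = \sum_j b_jz^j$, with the convention $b_j = 0$ for $j<0$. The crucial observation is the identity $(1+z^d)\beta(z) = \alpha(z)$, which reads coefficient-wise as
\begin{equation*}
b_i + b_{i-d} = a_i \qquad \text{for all } i \geq 0.
\end{equation*}

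Next I would check the two hypotheses of Lemma \ref{lemind}. Since $\ind\alpha(z)\geq 1$ we have $a_0 > 0$, hence $b_0 = a_0 > 0$, so $\ind\beta(z)\geq 1$. For the second hypothesis we must verify $b_i + b_{i-d}\geq 0$ for all $0\leq i \leq \ind\beta(z) + \ind\frac{(1+z)^N}{1+z^d} - d - 1$. By the identity above this is the same as $a_i \geq 0$ for $i$ in that range; and since $a_i > 0$ whenever $i < \ind\alpha(z)$ by definition of the index, it suffices to have
\begin{equation*}
\ind\beta(z) + \ind\frac{(1+z)^N}{1+z^d} - d - 1 \leq \ind\alpha(z) - 1,
\end{equation*}
which is precisely the standing hypothesis $\ind\alpha(z)\geq \ind\frac{\alpha(z)}{1+z^d} + \ind\frac{(1+z)^N}{1+z^d} - d$ rewritten (using $\ind\beta(z) = \ind\frac{\alpha(z)}{1+z^d}$).

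With both hypotheses in hand, Lemma \ref{lemind} yields
\begin{equation*}
\ind (1+z)^N\beta(z) \geq \ind\beta(z) + \ind\frac{(1+z)^N}{1+z^d} - d,
\end{equation*}
and translating back via $(1+z)^N\beta(z) = \frac{(1+z)^N\alpha(z)}{1+z^d}$ and $\ind\beta(z) = \ind\frac{\alpha(z)}{1+z^d}$ gives exactly the desired conclusion.

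I do not expect a serious obstacle here: the argument is essentially bookkeeping, and the only subtle point is making sure the range of indices where positivity of the $a_i$ is required is exactly the range guaranteed by the hypothesis — this is why the inequality appears with the same ``$-d$'' correction and is an equality of ranges rather than a strict containment, so one has to be careful not to be off by one when invoking Lemma \ref{lemind}.
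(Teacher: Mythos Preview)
Your proposal is correct and follows essentially the same route as the paper: set $\beta(z)=\alpha(z)/(1+z^d)$, use the coefficient identity $b_i+b_{i-d}=a_i$, verify the two hypotheses of Lemma~\ref{lemind} from the assumptions on $\alpha$, and conclude. The paper's proof is slightly terser (it asserts $\ind\alpha(z)\geq 1\Rightarrow \ind\beta(z)\geq 1$ without spelling out $b_0=a_0>0$), but the argument is the same and your off-by-one check is exactly the point.
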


\begin{proof}
Let 
\begin{equation*}
\alpha(z) = \sum a_iz^i, \quad   \frac{\alpha(z)}{1+z^d} = \sum b_i z^i=\beta(z)
\end{equation*}
and let $l = \ind \beta(z)$ and $s = \ind (1+z)^N/(1+z^d)$.
Suppose that
\begin{equation*}
\ind \alpha(z) \geq \ind \frac{\alpha(z)}{1+z^d} +\ind \frac{(1+z)^N}{1+z^d}%
-d
\end{equation*}
with $\ind \alpha(z)\geq 1$. In other words, $\ind \alpha(z)\geq 1$ and
\begin{equation*}
\ind \alpha(z) \geq l+s-d.
\end{equation*}
We want to show that
$$\ind (1+z)^N\beta(z) \geq l+s-d.$$
Since $\ind \alpha(z)\geq 1$ then $\ind \beta(z) \geq 1$. Also, note that
$$b_{i} +b_{i-d}= a_{i}>0$$ for all $0\leq i \leq l+s-d -1$, since $\ind \alpha(z) \geq l+s-d$. Thus, by Lemma \ref{lemind} we have that 
$$\ind (1+z)^N\beta(z) \geq l+s-d.$$
\end{proof}
\begin{lem}
\label{lb}
Let $d$, $N$ be natural numbers. Then 
\begin{equation*}
\ind \frac{(1+z)^{N}(1+z)^n}{1+z^d}\geq \ind \frac{(1+z)^n}{1+z^d}+\ind \frac{(1+z)^{N}}{1+z^d}-d
\end{equation*}
for all natural number $n$.
\end{lem}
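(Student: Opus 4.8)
The plan is to obtain Lemma~\ref{lb} as an immediate application of Lemma~\ref{lemind}, taking
$$\beta(z) = \frac{(1+z)^n}{1+z^d}.$$
With this choice $(1+z)^N\beta(z) = \frac{(1+z)^N(1+z)^n}{1+z^d}$ and $\ind\beta(z) = \ind\frac{(1+z)^n}{1+z^d}$, so the conclusion of Lemma~\ref{lemind} is precisely the inequality to be proved. It therefore suffices to check the two hypotheses of Lemma~\ref{lemind} for this particular $\beta$.

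The first hypothesis, $\ind\beta(z)\geq 1$, is immediate: the constant term of $(1+z)^n/(1+z^d)$ is $1>0$. For the second hypothesis, write $\beta(z)=\sum_j b_j z^j$. Since $(1+z^d)\beta(z)=(1+z)^n$, the quantity $b_i+b_{i-d}$ is exactly the coefficient of $z^i$ in $(1+z)^n$, namely $\binom{n}{i}$ (with the usual convention $\binom{n}{i}=0$ for $i<0$ or $i>n$). Hence $b_i+b_{i-d}\geq 0$ for \emph{every} integer $i$, so in particular this holds on the range $0\leq i\leq \ind\beta(z)+\ind\frac{(1+z)^N}{1+z^d}-d-1$ required by Lemma~\ref{lemind}, whatever that range happens to be (if it is empty the hypothesis is vacuous anyway).

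Both hypotheses being verified, Lemma~\ref{lemind} yields $\ind (1+z)^N\beta(z)\geq \ind\beta(z)+\ind\frac{(1+z)^N}{1+z^d}-d$, which is the assertion.

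There is essentially no obstacle in this argument; the one point worth flagging is that one should \emph{not} route the proof through Theorem~\ref{indlem} with $\alpha(z)=(1+z)^n$, because its numerical hypothesis $\ind\alpha(z)\geq \ind\frac{\alpha(z)}{1+z^d}+\ind\frac{(1+z)^N}{1+z^d}-d$ can genuinely fail (for instance when $n=N=3d$, where by Theorem~\ref{sertrun2} each of the two indices on the right equals $2d+1$, while $\ind(1+z)^n=3d+1$). The sign condition in Lemma~\ref{lemind}, by contrast, costs nothing here precisely because multiplying $\beta(z)$ by $1+z^d$ returns $(1+z)^n$, all of whose coefficients are nonnegative. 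This lemma then feeds, via Lemma~\ref{la}, into the linear lower bound on $\tau_{\bfd}$.
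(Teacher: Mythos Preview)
Your proof is correct and follows exactly the paper's approach: set $\beta(z)=(1+z)^n/(1+z^d)$, verify $\ind\beta(z)\geq 1$ and $b_i+b_{i-d}=\binom{n}{i}\geq 0$, and invoke Lemma~\ref{lemind}. The extra paragraph warning against routing through Theorem~\ref{indlem} is a nice observation but not part of the paper's proof.
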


\begin{proof}
Consider
\begin{equation*}
 \frac{(1+z)^n}{1+z^d} =\sum_{i=0}^{\infty} b_iz^i= \beta(z).
\end{equation*}
We want to show that
$$\ind (1+z)^N \beta(z) \geq \ind \beta(z)+\ind \frac{(1+z)^N}{1+z^d}-d.$$
Clearly $\ind \beta(z) \geq 1$. Also, we have that 
$$b_i+b_{i-d}=\bn{n}{i}\geq 0$$
for all $i$. Thus, the result follows from Lemma \ref{lemind}.
\end{proof}

\begin{thm}
\label{indlem4}
Suppose that
\begin{equation*}
r \in \left\{\frac{1}{n} \left( \ind \frac{(1+z)^n}{1+z^d}-d \right)| \: n \geq 1 \right\}
\end{equation*}
then there exists a $c$ such that 
\begin{equation*}
\ind \frac{(1+z)^n}{1+z^d} \geq rn+c
\end{equation*}
for all $n$.
\end{thm}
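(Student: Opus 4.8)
The plan is to read this off from Lemma~\ref{lb} combined with Lemma~\ref{la}, after one preliminary observation: the function $f(n)=\ind\frac{(1+z)^n}{1+z^d}$ is non-decreasing, which is the hypothesis Lemma~\ref{la} needs.

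First I would check the monotonicity of $f$. Write $\frac{(1+z)^n}{1+z^d}=\sum_i b_iz^i$ and set $l=\ind\frac{(1+z)^n}{1+z^d}$, so $b_0,\dots,b_{l-1}>0$; in particular $b_0=1$ and $l\geq 1$. Since $\frac{(1+z)^{n+1}}{1+z^d}=(1+z)\sum_i b_iz^i=\sum_i(b_i+b_{i-1})z^i$, the coefficients of $z^0,\dots,z^{l-1}$ in $\frac{(1+z)^{n+1}}{1+z^d}$ are $b_0$ and $b_i+b_{i-1}$ for $1\leq i\leq l-1$, all strictly positive. Hence $\ind\frac{(1+z)^{n+1}}{1+z^d}\geq l=f(n)$, so $f(n+1)\geq f(n)$ for every $n$.

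Next, since $r$ lies in $\bigl\{\tfrac1n\bigl(\ind\frac{(1+z)^n}{1+z^d}-d\bigr)\mid n\geq 1\bigr\}$, I would fix $N\geq 1$ with $r=\tfrac1N\bigl(f(N)-d\bigr)$ and set $A=f(N)-d=rN$. Applying Lemma~\ref{lb} with this $N$ yields, for every natural number $n$,
\begin{align*}
f(n+N)&=\ind\frac{(1+z)^N(1+z)^n}{1+z^d}\\
&\geq\ind\frac{(1+z)^n}{1+z^d}+\ind\frac{(1+z)^N}{1+z^d}-d=f(n)+A .
\end{align*}

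Finally I would apply Lemma~\ref{la} to the non-decreasing function $f$ with $n_0=1$, the above $N$, and $A$: it produces a constant $c$ with $f(n)\geq(A/N)n+c=rn+c$ for all $n$, which is exactly the claim. The only step that is not pure assembly is the monotonicity of $f$, and that is elementary as sketched above; the real content has been front-loaded into Lemma~\ref{lb} (a super-additivity-type estimate for $\ind\frac{(1+z)^n}{1+z^d}$) and into Lemma~\ref{la} (the Fekete-style passage from such an inequality to a linear lower bound), so I do not expect a genuine obstacle here.
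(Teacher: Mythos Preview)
Your proof is correct and follows essentially the same route as the paper: fix $N$ realizing $r$, apply Lemma~\ref{lb} to get $f(n+N)\geq f(n)+(f(N)-d)$, then invoke Lemma~\ref{la} to obtain the linear lower bound. The one addition you make—the explicit check that $f(n)=\ind\frac{(1+z)^n}{1+z^d}$ is non-decreasing—is in fact a hypothesis of Lemma~\ref{la} that the paper's own proof silently assumes, so your version is slightly more careful.
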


\begin{proof}
Let
\begin{equation*}
r = \frac{1}{N} \left( \ind \frac{(1+z)^{N}}{1+z^d} -d \right).
\end{equation*}
Consider the function 
\begin{equation*}
\tau_{(d)}(k)=\ind \frac{(1+z)^k}{1+z^d}.
\end{equation*}
By Lemma \ref{lb} we have
\begin{align*}
\tau_{(d)}(n+N)&=\ind \frac{(1+z)^n(1+z)^N}{1+z^d}
\\
&\geq \ind \frac{(1+z)^n}{1+z^d}+\ind \frac{(1+z)^{N}}{1+z^d}-d
\\
&=\tau_{(d)}(n)+(\tau_{(d)}(N)-d).
\end{align*}
By Lemma \ref{la} there exists $c$ such that 
$$\tau_{(d)}(n)\geq \frac{1}{N}(\tau_{(d)}(N)-d)n+c$$
for all $n$. In other words,
\begin{equation*}
\ind \frac{(1+z)^n}{1+z^d} \geq rn+c
\end{equation*}
for all $n$.
\end{proof}

\begin{thm}
\label{indlem2}
Let $\mathbf{d}=(d_1,\dots,d_m)$ and let $\mathbf{d'}=(d_1,\dots,d_m,d)$. Suppose that
\begin{equation*}
r=\frac{1}{N}\left( \ind \frac{(1+z)^{N}}{1+z^d}-d \right)
\end{equation*}
for some positive integer $N$. Consider the function $\tau_{\mathbf{d}}(n)$ as defined in (\ref{eq:eqtao}). If 
$$\tau_{\mathbf{d}}(n)\geq \tau_{\mathbf{d'}}(n)+rN$$
then
$$\tau_{\mathbf{d'}}(n+N)\geq \tau_{\mathbf{d'}}(n)+rN$$
\end{thm}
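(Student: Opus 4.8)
The plan is to observe that this statement is nothing but the instance of Theorem~\ref{indlem} obtained by taking $\alpha(z)=T_{\mathbf{d},n}(z)=(1+z)^n/\prod_{i=1}^m(1+z^{d_i})$; the whole proof is then a translation of notation.

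First I would record the dictionary between the two settings. With $\alpha(z)=T_{\mathbf{d},n}(z)$ we have
\[
\frac{\alpha(z)}{1+z^d}=T_{\mathbf{d'},n}(z),\qquad \frac{(1+z)^N\alpha(z)}{1+z^d}=T_{\mathbf{d'},n+N}(z),
\]
since $\mathbf{d'}=(d_1,\dots,d_m,d)$ means $\prod_{i=1}^{m+1}(1+z^{d'_i})=(1+z^d)\prod_{i=1}^m(1+z^{d_i})$, and multiplying the numerator by $(1+z)^N$ turns $(1+z)^n$ into $(1+z)^{n+N}$. Hence $\ind(\alpha(z)/(1+z^d))=\tau_{\mathbf{d'}}(n)$, $\ind((1+z)^N\alpha(z)/(1+z^d))=\tau_{\mathbf{d'}}(n+N)$, and $\ind\alpha(z)=\tau_{\mathbf{d}}(n)$. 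Finally, rearranging the defining equation $r=\frac{1}{N}\big(\ind((1+z)^N/(1+z^d))-d\big)$ gives $\ind((1+z)^N/(1+z^d))=rN+d$.

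Next I would check that $\alpha(z)$ satisfies the two hypotheses of Theorem~\ref{indlem}. Since $\alpha(z)$ has constant term $1>0$, its first non-positive coefficient occurs at index at least one, so $\ind\alpha(z)\geq 1$. For the second hypothesis, substituting the identity $\ind((1+z)^N/(1+z^d))=rN+d$ gives
\[
\ind\frac{\alpha(z)}{1+z^d}+\ind\frac{(1+z)^N}{1+z^d}-d=\tau_{\mathbf{d'}}(n)+(rN+d)-d=\tau_{\mathbf{d'}}(n)+rN,
\]
so the inequality $\ind\alpha(z)\geq \ind(\alpha(z)/(1+z^d))+\ind((1+z)^N/(1+z^d))-d$ required by Theorem~\ref{indlem} is exactly the hypothesis $\tau_{\mathbf{d}}(n)\geq\tau_{\mathbf{d'}}(n)+rN$ of the present theorem. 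Applying Theorem~\ref{indlem} then yields $\ind((1+z)^N\alpha(z)/(1+z^d))\geq \tau_{\mathbf{d'}}(n)+rN$, which in the dictionary above reads $\tau_{\mathbf{d'}}(n+N)\geq\tau_{\mathbf{d'}}(n)+rN$, as required.

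There is no real obstacle here beyond what was already handled in Theorem~\ref{indlem} (and, upstream, Lemma~\ref{lemind}); the only care needed is in the bookkeeping — keeping $\mathbf{d}$ and $\mathbf{d'}$ distinct, remembering that division by $1+z^d$ is what passes from $\tau_{\mathbf{d}}$ to $\tau_{\mathbf{d'}}$ while multiplication by $(1+z)^N$ is what shifts $n$ to $n+N$, and noting that in the degenerate case $\tau_{\mathbf{d'}}(n)=\infty$ the hypothesis forces all the series involved to have strictly positive coefficients, so the conclusion holds trivially.
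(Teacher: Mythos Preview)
Your proposal is correct and follows essentially the same approach as the paper: set $\alpha(z)=T_{\mathbf{d},n}(z)$, translate the hypothesis into the hypothesis of Theorem~\ref{indlem}, and read off the conclusion. Your write-up is in fact slightly more careful than the paper's, since you explicitly verify the side condition $\ind\alpha(z)\geq 1$ and comment on the degenerate $\tau_{\mathbf{d'}}(n)=\infty$ case.
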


\begin{proof}
Consider 
\begin{equation*}
\alpha(z)= \frac{(1+z)^n}{\prod_{i=1}^m(1+z^{d_i})}.
\end{equation*}
In this case
\begin{align*}
&\tau_{\bfd}(n)=\ind \alpha(z)\\
&\tau_{\mathbf{d'}}(n)= \ind \frac{(1+z)^n}{\prod_{i=1}^m(1+z^{d_i})(1+z^d)}=\ind \frac{\alpha(z)}{1+z^d}\\
&\tau_{\mathbf{d'}}(n+N)= \ind \frac{(1+z)^{n+N}}{\prod_{i=1}^m(1+z^{d_i})(1+z^d)}=\ind \frac{\alpha(z)(1+z)^{N}}{1+z^d}.
\end{align*}
Also,
\begin{equation*}
rN=\ind \frac{(1+z)^{N}}{1+z^d}-d.
\end{equation*}
Suppose
$$\tau_{\mathbf{d}}(n)\geq \tau_{\mathbf{d'}}(n)+rN.$$
Thus,
\begin{align*}
\ind \alpha(z)\geq \ind \frac{\alpha(z)}{1+z^d}+ \ind \frac{(1+z)^{N}}{1+z^d}-d.
\end{align*}
By Theorem \ref{indlem}
\begin{align*}
\ind \frac{\alpha(z)(1+z)^{N}}{1+z^d}
\geq \ind \frac{\alpha(z)}{1+z^d}+ \ind \frac{(1+z)^{N}}{1+z^d}-d.
\end{align*}
Therefore,
$$\tau_{\mathbf{d'}}(n+N)\geq \tau_{\mathbf{d'}}(n)+rN.$$
\end{proof}

\begin{thm}
\label{keylem}
Let $\mathbf{d}=(d_1,\dots,d_m)$ and let $\mathbf{d'}=(d_1,\dots,d_m,d)$. Suppose that
\begin{equation*}
s=\frac{1}{N}\left( \ind \frac{(1+z)^{N}}{1+z^d}-d \right)
\end{equation*}
for some positive integer $N$. Suppose that there exist $r\geq s$ and $c$ such that 
$$\tau_{\mathbf{d}}(n)\geq rn+c,$$for all $n$. Then there exists $c'$ such that 
$$\tau_{\mathbf{d'}}(n)\geq sn+c',$$ for all $n$. 
\end{thm}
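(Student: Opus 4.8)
The plan is to propagate the bound $\tau_{\mathbf{d'}}(n)\ge sn+c'$ upward in steps of size $N$ by means of Theorem \ref{indlem2}, supplemented by an elementary estimate at those $n$ where the hypothesis of that theorem fails. Before starting I would record three routine facts. First, $\tau_{\mathbf{d'}}$ is non-decreasing: if the first $\tau_{\mathbf{d'}}(n)$ coefficients of $(1+z)^n/\bigl(\prod_{i=1}^m(1+z^{d_i})(1+z^d)\bigr)$ are positive, then after multiplying the numerator by $1+z$ each of them becomes a sum of two positive numbers, so $\tau_{\mathbf{d'}}(n+1)\ge\tau_{\mathbf{d'}}(n)$. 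Second, $\tau_{\mathbf{d}}(k)\ge 1$ for all $k$, since $(1+z)^k/\prod_{i=1}^m(1+z^{d_i})$ has constant term $1$; this is the side condition (coming from Theorem \ref{indlem}) one needs in order to invoke Theorem \ref{indlem2}. Third, $sN=\ind\frac{(1+z)^N}{1+z^d}-d$ is an integer, so $c-2sN$ is a well-defined real number.

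Now I would fix $c'$ to be any real number with $c'\le c-2sN$ and $c'\le\tau_{\mathbf{d'}}(k)-sk$ for $0\le k\le N-1$, and prove $\tau_{\mathbf{d'}}(n)\ge sn+c'$ by induction on $n$. For $n<N$ this is immediate from the choice of $c'$. For $n\ge N$, set $k=n-N\ge 0$ and distinguish two cases. If $\tau_{\mathbf{d}}(k)\ge\tau_{\mathbf{d'}}(k)+sN$, then Theorem \ref{indlem2}, applied at $k$ with its ``$r$'' equal to our $s$, gives $\tau_{\mathbf{d'}}(n)=\tau_{\mathbf{d'}}(k+N)\ge\tau_{\mathbf{d'}}(k)+sN$, and combining this with the inductive hypothesis $\tau_{\mathbf{d'}}(k)\ge sk+c'$ yields $\tau_{\mathbf{d'}}(n)\ge s(k+N)+c'=sn+c'$. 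If instead $\tau_{\mathbf{d}}(k)<\tau_{\mathbf{d'}}(k)+sN$, then $\tau_{\mathbf{d'}}(k)>\tau_{\mathbf{d}}(k)-sN\ge rk+c-sN\ge sk+c-sN$, where I used the standing hypothesis $\tau_{\mathbf{d}}(k)\ge rk+c$ and $r\ge s$; since $\tau_{\mathbf{d'}}$ is non-decreasing and $k\le n$, this gives $\tau_{\mathbf{d'}}(n)\ge\tau_{\mathbf{d'}}(k)>s(n-N)+c-sN=sn+(c-2sN)\ge sn+c'$. This closes the induction. (When $\mathbf{d}$ is empty, $\tau_{\mathbf{d}}$ is identically $\infty$ and only the first case occurs, so the argument recovers Theorem \ref{indlem4}.)

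The only place where anything beyond bookkeeping enters is the realization that the hypothesis $\tau_{\mathbf{d}}(k)\ge\tau_{\mathbf{d'}}(k)+sN$ of Theorem \ref{indlem2} is \emph{not} automatic here --- unlike in Lemma \ref{lb}, where the relevant sums $b_i+b_{i-d}=\binom{n}{i}$ are non-negative by inspection --- so one cannot simply feed super-additivity of $\tau_{\mathbf{d'}}$ into Lemma \ref{la}, as is done in the length-one case. The way around this is the observation built into the second case above: precisely when that hypothesis fails, $\tau_{\mathbf{d'}}(k)$ already lies within $sN$ of $\tau_{\mathbf{d}}(k)\ge rk+c$, hence already above the required linear bound, so it suffices to run the two-case induction in place of a single application of Lemma \ref{la}. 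I expect this case split --- together with checking that $\tau_{\mathbf{d'}}$ is non-decreasing and that Theorem \ref{indlem2} applies (the only fine point being that $\tau_{\mathbf{d}}(k)\ge 1$, which holds automatically) --- to be the main substantive part; everything else is arithmetic with the constant $c'$.
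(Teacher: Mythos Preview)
Your proof is correct and follows essentially the same approach as the paper: both arguments hinge on the dichotomy that either the hypothesis of Theorem~\ref{indlem2} holds at $k=n-N$ (allowing a gain of $sN$ over an $N$-step), or its failure forces $\tau_{\mathbf{d'}}(k)>\tau_{\mathbf{d}}(k)-sN\ge sk+(c-sN)$, which already beats the desired linear bound. The paper organizes this by locating the last $n_1\le n$ at which $\tau_{\mathbf{d'}}(n_1)\ge sn_1+(c-sN)$ and then iterating Theorem~\ref{indlem2} from $n_1$ upward, whereas you package the same idea as a clean two-case induction on $n$; the content is the same, and your presentation is arguably tidier.
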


\begin{proof}
Let $\mathbf{d}=(d_1,\dots,d_m)$ and let $\mathbf{d'}=(d_1,\dots,d_m,d)$. Suppose that
\begin{equation*}
s=\frac{1}{N}\left( \ind \frac{(1+z)^{N}}{1+z^d}-d \right)
\end{equation*}
for some positive integer $N$. Suppose that there exist $r\geq s$ and $c$ such that 
$$\tau_{\mathbf{d}}(n)\geq rn+c,$$ for all $n$. 
Let us prove that for $c'=\min\{c-2sN,-sN,0\}$ we have 
$$\tau_{\mathbf{d'}}(n)\geq sn+c',$$ for all $n$.
If $s\leq 0$, the Theorem is true since $c'\geq0$ and $\tau_{\bfd'}(n)\geq 0$.
Suppose that $s>0$. Let $n$ be any natural number. We want to show that 
$$\tau_{\mathbf{d'}}(n)\geq sn+c'.$$
Let $k$ be the largest positive integer less than or equal to $n$ such that $\tau_{\mathbf{d'}}(k)\geq sk+(c-sN)$, and set $n_1=k$. If no such positive integer exists, set $n_1=0$. If $n_1=n$, the assertion is true so assume that $n_1<n$. Write $n-n_1-1=hN+b$ where $b$ is an integer $b<N$. Let $m=n_1+1+jN$ where $0\leq j\leq h$. Then
$$\tau_{\mathbf{d'}}(m)<sm+(c-sN)\leq rm+(c-sN).$$
Hence $\tau_{\mathbf{d}}(m)\geq \tau_{\mathbf{d'}}(m)+sN$. By Theorem $\ref{indlem2}$ we have that $\tau_{\mathbf{d'}}(m+N)\geq \tau_{\mathbf{d'}}(m)+sN$. So by iterating this argument,
$$\tau_{\mathbf{d'}}(m)\geq \tau_{\mathbf{d'}}(n_1+1)+jsN\geq \tau_{\mathbf{d'}}(n_1)+jsN$$
Hence,
\begin{align*}
\tau_{\mathbf{d'}}(n)&\geq \tau_{\mathbf{d'}}(n_1+1+hN)
\\
&\geq \tau_{\mathbf{d'}}(n_1)+hsN.
\end{align*}
If $n_1=k$ we have that
\begin{align*}
\tau_{\mathbf{d'}}(n)&\geq \tau_{\mathbf{d'}}(n_1)+hsN
\\
& \geq s(n_1)+(c-sN)+hsN
\\
& = s(n_1+hN)+(c-sN)
\\
&=s(n-1-b)+(c-sN)
\\
&=sn-s(1+b)+(c-sN)
\\
&\geq sn-sN+(c-sN)
\\
& \geq sn+c'.
\end{align*}
If $n_1=0$ we have
\begin{align*}
\tau_{\mathbf{d'}}(n)&\geq \tau_{\mathbf{d'}}(n_1)+hsN
\\
& \geq hsN
\\
&=s(n-1-b)
\\
&=sn-s(1+b)
\\
& \geq sn-sN
\\
& \geq sn+c'.
\end{align*}
\end{proof}

\begin{thm}
\label{indlem3}
Let $\mathbf{d}=(d_1,\dots,d_m)$. Suppose that $r$ is such that for all $i$ there exits an $n_i$ such that
$$r\leq \frac{1}{n_i} \left(\ind \frac{(1+z)^{n_i}}{1+z^{d_i}}-d_i \right).$$
Then there exists a $c$ such that 
$$\tau_{\mathbf{d}}(n)=\ind \frac{(1+z)^n}{\prod_{i=1}^{m}(1+z^{d_i})}\geq rn+c$$
for all $n$.
\end{thm}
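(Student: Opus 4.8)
I would argue by induction on the length $m$ of $\mathbf{d}$, with Theorem~\ref{indlem4} as the base case and Theorem~\ref{keylem} driving the inductive step. Observe first that $\tau_{\mathbf{d}}(n)$ is the index of $\frac{(1+z)^n}{\prod_{i=1}^m(1+z^{d_i})}$, so it depends only on the multiset of exponents, and the stated hypothesis is likewise a condition on that multiset; hence at every stage I may freely permute $d_1,\dots,d_m$ together with their witnesses $n_1,\dots,n_m$.

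\emph{Base case $m=1$.} Set $s_1=\tfrac{1}{n_1}\left(\ind\frac{(1+z)^{n_1}}{1+z^{d_1}}-d_1\right)$, so that $r\le s_1$ and $s_1$ lies in the set appearing in Theorem~\ref{indlem4}. Applied to the value $s_1$ and the degree $d_1$, that theorem produces a constant $c$ with $\ind\frac{(1+z)^n}{1+z^{d_1}}\ge s_1 n+c$ for all $n$; since $s_1\ge r$ this gives $\tau_{(d_1)}(n)\ge s_1n+c\ge rn+c$ for all $n$.

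\emph{Inductive step.} Assume the result for sequences of length $m-1$, and let $\mathbf{d}=(d_1,\dots,d_m)$ with witnesses $n_1,\dots,n_m$; put $s_i=\tfrac{1}{n_i}\left(\ind\frac{(1+z)^{n_i}}{1+z^{d_i}}-d_i\right)$, so $r\le s_i$ for all $i$. Reorder so that $s_m=\min_i s_i$, and let $\mathbf{d}'=(d_1,\dots,d_{m-1})$. The key observation is that $s_m$ itself satisfies the hypothesis of the theorem relative to $\mathbf{d}'$: for each $i<m$ the witness $n_i$ works, because $\tfrac{1}{n_i}\left(\ind\frac{(1+z)^{n_i}}{1+z^{d_i}}-d_i\right)=s_i\ge s_m$. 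By the inductive hypothesis there is a constant $c_0$ with $\tau_{\mathbf{d}'}(n)\ge s_m n+c_0$ for all $n$. Now apply Theorem~\ref{keylem} with its shorter sequence taken to be $\mathbf{d}'$, its longer sequence to be $\mathbf{d}=(\mathbf{d}',d_m)$, its added degree $d=d_m$, its integer $N=n_m$, and $s=s_m=\tfrac{1}{n_m}\left(\ind\frac{(1+z)^{n_m}}{1+z^{d_m}}-d_m\right)$; its hypothesis holds because $\tau_{\mathbf{d}'}(n)\ge s_m n+c_0$ furnishes a lower bound of slope $s_m=s$. It yields a constant $c'$ with $\tau_{\mathbf{d}}(n)\ge s_m n+c'$ for all $n$, and as $s_m\ge r$ we conclude $\tau_{\mathbf{d}}(n)\ge rn+c'$ for all $n$, completing the induction.

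The step I expect to require the most care is the choice of which degree to peel off in the inductive step. Theorem~\ref{keylem} only allows the slope to drop to the particular value $s$ attached to the removed degree, so one must remove the degree $d_i$ whose witness slope $s_i$ is \emph{smallest}; removing any other would force the shorter sequence to carry a lower bound of slope at least $s_m$, which the inductive hypothesis cannot supply in general. The remaining points -- permutation-invariance of $\tau_{\mathbf{d}}$ and of the hypothesis, and that Theorems~\ref{indlem4} and~\ref{keylem} already return constants uniform in $n$ -- are routine.
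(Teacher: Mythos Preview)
Your proof is correct and follows essentially the same approach as the paper: establish the base case with Theorem~\ref{indlem4}, then repeatedly apply Theorem~\ref{keylem} to append one degree at a time, always appending the degree whose witness slope $s_i$ is smallest among those remaining. The paper presents this as a single upfront sort $r_1\ge r_2\ge\cdots\ge r_m\ge r$ followed by an explicit iteration, while you phrase it as a formal induction that peels off the minimum at each step; these are the same argument, and your remark about why the minimum-slope degree must be the one removed is exactly the point.
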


\begin{proof}
Let
\begin{equation*}
r_i=\frac{1}{n_i}\left( \ind \frac{(1+z)^{n_i}}{1+z^{d_i}}-d_i\right).
\end{equation*}
Reordering we can suppose that $r_1\geq r_2\geq\cdots \geq r_m \geq r$. By Theorem $\ref{indlem4}$ we have that there exists $c_1$ such that
$$\tau_{(d_1)}(n) \geq r_1n+c_1,$$ for all $n$.
By Theorem $\ref{keylem}$ we have that there exists $c_2$ such that
$$\tau_{(d_1,d_2)}(n) \geq r_2n+c_2,$$ for all $n$.
By iterating this argument we have that there exists $c$ such that
$$\tau_{\mathbf{d}}(n) \geq rn+c,$$ for all $n$.
\end{proof}

\begin{lem}
\label{lembino}
Let $d$ be a natural number. Then there exists $M$ such that for all $n\geq M$
$$\bn{2n}{n+d-j}-\bn{2n}{n-j}+\bn{2n}{n-j-d}-\bn{2n}{n-j-2d}>0$$ for all $0\leq j \leq d-\lfloor{d/2}\rfloor-1.$
\end{lem}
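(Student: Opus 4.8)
The plan is to rewrite the alternating binomial sum in terms of the central coefficients $\binom{2n}{n-a}$ and then control it through the first differences $\binom{2n}{n-a}-\binom{2n}{n-a-1}$. Put $k=n-j$ and use $\binom{2n}{m}=\binom{2n}{2n-m}$ on the first summand; the expression in the statement becomes
\[
\binom{2n}{n-(d-j)}-\binom{2n}{n-j}+\binom{2n}{n-(d+j)}-\binom{2n}{n-(2d+j)},
\]
so the claim is equivalent to
\[
\binom{2n}{n-(d+j)}-\binom{2n}{n-(2d+j)}\;>\;\binom{2n}{n-j}-\binom{2n}{n-(d-j)}.
\]
The hypothesis $0\le j\le d-\lfloor d/2\rfloor-1$ says precisely that $2j<d$, so $0\le j<d-j\le d+j<2d+j$ and both sides above are genuine nonnegative quantities. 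Since $d$ is fixed there are only finitely many admissible $j$; it therefore suffices to produce, for each such $j$, a bound $M_j$ beyond which the inequality holds, and then take $M=\max_j M_j$.

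Next I would set $\delta_a:=\binom{2n}{n-a}-\binom{2n}{n-a-1}$ for $0\le a\le n-1$. Using the elementary ratio $\binom{2n}{m}/\binom{2n}{m-1}=(2n-m+1)/m$ one obtains the closed form $\delta_a=\binom{2n}{n-a-1}\cdot\frac{2a+1}{n-a}>0$ and, after a short computation,
\[
\frac{\delta_{a+1}}{\delta_a}=\frac{(n-a)(2a+3)}{(n+a+2)(2a+1)},
\]
which is $>1$ exactly when $n>2a^2+4a+1$. Hence, once $n$ exceeds an explicit quadratic bound in $d$ (e.g.\ $n\ge 20(d+1)^2$, which certainly forces $n>2a^2+4a+1$ for every $a\le 3d$), the finite string $\delta_0,\delta_1,\dots,\delta_{3d}$ is strictly increasing.

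Granting this monotonicity, the estimate is quick. Telescoping gives
\[
\binom{2n}{n-j}-\binom{2n}{n-(d-j)}=\sum_{a=j}^{d-j-1}\delta_a,\qquad
\binom{2n}{n-(d+j)}-\binom{2n}{n-(2d+j)}=\sum_{a=d+j}^{2d+j-1}\delta_a,
\]
the first sum having $d-2j\ge1$ terms and the second having $d$ terms, all indexed by $a\le 2d+j-1<3d$. Since the $\delta_a$ increase on this range, the first sum is at most $(d-2j)\,\delta_{d-j-1}\le d\,\delta_{d-j-1}$, while the second is at least $d\,\delta_{d+j}$. Because $d-j-1<d+j$, strict monotonicity yields $d\,\delta_{d-j-1}<d\,\delta_{d+j}$, so the right-hand side of the equivalent inequality is strictly less than the left-hand side, which is the claim; $M=20(d+1)^2$ works uniformly over all admissible $j$.

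The only real obstacle is the monotonicity $\delta_a<\delta_{a+1}$ in $a$: the first differences of the row $\binom{2n}{\cdot}$ do grow as one leaves the central peak, but one must verify that this growth already holds on the entire index window $\{j,\dots,2d+j-1\}$ swept out by the telescoping sums, and this is exactly where the inequality $n>2a^2+4a+1$ — hence the restriction to large $n$ — enters. Everything else is bookkeeping, with the inequality $2j<d$ doing the work of keeping every index comparison ($j<d-j$, the sums nonempty, $d-j-1<d+j$) oriented the right way.
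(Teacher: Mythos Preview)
Your argument is correct. The rewriting via $\binom{2n}{n+d-j}=\binom{2n}{n-(d-j)}$, the telescoping into first differences $\delta_a=\binom{2n}{n-a}-\binom{2n}{n-a-1}$, and the ratio computation $\delta_{a+1}/\delta_a=(n-a)(2a+3)/\big((n+a+2)(2a+1)\big)$ are all fine, and the constraint $2j<d$ does exactly the orientation work you say it does. The final chain
\[
\sum_{a=j}^{d-j-1}\delta_a \;\le\; (d-2j)\,\delta_{d-j-1}\;\le\; d\,\delta_{d-j-1}\;<\;d\,\delta_{d+j}\;\le\;\sum_{a=d+j}^{2d+j-1}\delta_a
\]
is valid once the $\delta_a$ are strictly increasing on $\{0,\dots,3d\}$, and your bound $n\ge 20(d+1)^2$ more than suffices for that.

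The paper takes a genuinely different route. Instead of telescoping, it first replaces each of the four binomials by its extremal value over the admissible range of $j$, obtaining the single $j$-free lower bound
\[
\binom{2n}{n+d}-\binom{2n}{n}+\binom{2n}{n-2d+\lfloor d/2\rfloor+1}-\binom{2n}{n-2d},
\]
then clears factorials to write this as $q(n)/\big((n+2d-\lfloor d/2\rfloor-1)!\,(n+2d)!\big)$ with $q$ a polynomial in $n$ of degree at most $4d-\lfloor d/2\rfloor-1$, and checks by direct expansion that the top two coefficients of $q$ give a positive leading term. Your approach is more structural: it identifies \emph{why} the inequality holds (the second block of $d$ differences dominates the first block of $d-2j$ differences because the $\delta_a$ grow as one moves away from the centre), and it yields an explicit $M$ depending only on $d$. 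The paper's approach is purely asymptotic and does not produce an explicit constant, but it avoids the side computation of the ratio $\delta_{a+1}/\delta_a$. Either argument is adequate for the use the lemma is put to later.
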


\begin{proof}
Let $p=\lfloor{d/2}\rfloor$. Note that for $0\leq j \leq d-p-1$ 
\begin{align*}
&\bn{2n}{n+d-j}-\bn{2n}{n-j}+\bn{2n}{n-j-d}-\bn{2n}{n-j-2d}
\\
&\geq \bn{2n}{n+d}-\bn{2n}{n}+\bn{2n}{n-2d+p+1}-\bn{2n}{n-2d}. 
\end{align*}
Now,
\begin{align*}
&\bn{2n}{n+d}-\bn{2n}{n}+\bn{2n}{n-2d+p+1}-\bn{2n}{n-2d}
\\
&= \frac{(2n)!}{(n+d)!(n-d)!}-\frac{(2n)!}{n!n!}+\frac{(2n)!}{(n-2d+p+1)!(n+2d-p-1)!}
\\
&-\frac{(2n)!}{(n-2d)!(n+2d)!}.
\end{align*}
Note that 
\begin{align*}
&\frac{1}{(n+d)!(n-d)!}-\frac{1}{n!n!}+\frac{1}{(n-2d+p+1)!(n+2d-p-1)!}
\\
&-\frac{1}{(n-2d)!(n+2d)!}
\\
&=\frac{q(n)}{(n+2d-p-1)!(n+2d)!},
\end{align*}
where
\begin{align*}
q(n)&=\prod_{i=1}^{d}(n+d+i)\prod_{i=1}^{3d-p-1}(n-d+i)-\prod_{i=1}^{2d}(n+i)\prod_{i=1}^{2d-p-1}(n+i)
\\
&+\prod_{i=1}^{4d-p-1}(n-2d+p+1+i)-\prod_{i=1}^{4d-p-1}(n-2d+i).
\end{align*} 
Clearly $q(n)$ is a polynomial in $n$ of degree at most $4d-p-1$. The coefficient of $n^{4d-p-1}$ is easily seen to be zero and that of $n^{4d-p-2}$ is
\begin{align*}
&\left(d^2+\frac{d(d+1)}{2}+(3d-p-1)(-d)+\frac{(3d-p-1)(3d-p)}{2}\right)
\\
&-\left(\frac{(2d)(2d+1)}{2}+\frac{(2d-p-1)(2d-p)}{2}\right)
\\
&+\left((4d-p-1)(-2d+p+1)+\frac{(4d-p-1)(4d-p)}{2}\right)
\\
&-\left((4d-p-1)(-2d)+\frac{(4d-p-1)(4d-p)}{2}\right)
\\
&=4dp-d^2-p^2+4d-2p-1,
\end{align*}
which is positive for all $d\geq 1$. Thus the leading coefficient of $q(n)$ is positive and $q(n)$ is positive for all $n\geq M$, for some $M$.
\end{proof}

\begin{lem}\label{lemg}
Let $n$, $d$ be natural numbers. Then
\begin{equation*}
\gamma(2n,k,d)>0
\end{equation*}
for all $0\leq k \leq n+ \lfloor{d/2}\rfloor$.
\end{lem}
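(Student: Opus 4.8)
The plan is to work with $\gamma(2n,k,d)$ in the explicit form $\sum_{i=0}^{\lfloor k/d\rfloor}(-1)^i\binom{2n}{k-id}$ coming from (\ref{eq:eq1}), and to exploit that the binomial coefficients $\binom{2n}{j}$ are strictly increasing for $0\le j\le n$, strictly decreasing for $n\le j\le 2n$, and symmetric, $\binom{2n}{j}=\binom{2n}{2n-j}$. (The only range of $k$ that is not immediately trivial is $k\le 2n$, since $\binom{2n}{j}=0$ for $j>2n$.) First I would dispose of the case $0\le k\le n$: the indices $k-id$ occurring in the sum run from $k$ down to $k\bmod d$ and hence all lie in $[0,n]$, so the terms satisfy $\binom{2n}{k}>\binom{2n}{k-d}>\binom{2n}{k-2d}>\cdots>0$; an alternating sum of a strictly decreasing sequence of positive reals beginning with a plus sign is strictly positive, giving $\gamma(2n,k,d)>0$.

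For $n<k\le n+\lfloor d/2\rfloor$ I would peel off the leading term and write $\gamma(2n,k,d)=\binom{2n}{k}-\bigl(\binom{2n}{k-d}-\binom{2n}{k-2d}+\cdots\bigr)$. Since $k-d\le n+\lfloor d/2\rfloor-d=n-\lceil d/2\rceil<n$, every index in the parenthesized sum lies in $[0,n)$, so by the same monotonicity $0<\binom{2n}{k-d}-\binom{2n}{k-2d}+\cdots\le\binom{2n}{k-d}$, whence $\gamma(2n,k,d)\ge\binom{2n}{k}-\binom{2n}{k-d}$. Thus it suffices to show $\binom{2n}{k}\ge\binom{2n}{k-d}$, which by symmetry and unimodality amounts to the distance inequality $|k-n|\le|k-d-n|$. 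But $k>n$ gives $|k-n|=k-n\le\lfloor d/2\rfloor$, while $k-d<n$ gives $|k-d-n|=n+d-k\ge n+d-(n+\lfloor d/2\rfloor)=\lceil d/2\rceil\ge\lfloor d/2\rfloor$, so the inequality holds.

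The step I expect to be the real obstacle is obtaining \emph{strict} positivity in the boundary case. The distance inequality $|k-n|\le|k-d-n|$ above is strict except when $k-n=n+d-k$, i.e.\ when $d$ is even and $k=n+d/2=n+\lfloor d/2\rfloor$; there the leading difference $\binom{2n}{k}-\binom{2n}{k-d}$ vanishes and one must instead read off positivity from the next block, $\gamma(2n,k,d)=\binom{2n}{k-2d}-\binom{2n}{k-3d}+\cdots$, which is strictly positive as soon as $k-2d=n-3d/2\ge 0$. Pinning down this tail (and, if needed, restricting to $n$ beyond a small explicit bound, in the spirit of the ``for all $n\ge M$'' in Lemma \ref{lembino}) is the one place where care is required; the rest is routine bookkeeping with the monotonicity of $\binom{2n}{j}$.
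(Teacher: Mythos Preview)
Your approach is essentially the paper's. The only difference in presentation is that the paper peels off \emph{two} terms at once, writing
\[
\gamma(2n,k,d)=\Bigl(\tbinom{2n}{k}-\tbinom{2n}{k-d}\Bigr)+\gamma(2n,k-2d,d),
\]
and then observes that the first bracket is $\ge 0$ (by the symmetry/unimodality argument you give) while the second summand is $>0$ by the already-proven case $0\le k-2d\le n$. This sidesteps your separate treatment of the boundary case $d$ even, $k=n+d/2$: strict positivity comes from the tail $\gamma(2n,k-2d,d)$ automatically, not from the leading difference.

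Your worry about the tail when $k-2d<0$ (i.e.\ $n<3d/2$ at the boundary value $k=n+\lfloor d/2\rfloor$) is well placed. The paper's proof silently assumes $k-2d\ge 0$ when invoking the first case, and indeed the lemma as stated fails for small $n$: for instance $\gamma(2,2,2)=\binom{2}{2}-\binom{2}{0}=0$ and $\gamma(4,3,2)=\binom{4}{3}-\binom{4}{1}=0$. So there is no argument you are missing here; rather, the statement should carry a hypothesis like $2n\ge 3d$ (equivalently $k-2d\ge 0$ at the top of the range). This is harmless for the application: in Theorem~\ref{thmind} one only needs the inequality for $n\ge M$, and the range \eqref{eq:eq2} is used there with $n$ already large.
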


\begin{proof}
By definition we have
$$\gamma (2n,k,d)=\sum_{j=0}^{\lfloor k/d \rfloor}(-1)^j\bn{2n}{k-jd}.$$
We know that $\bn{2n}{j}$ is strictly increasing when $0\leq j \leq n$, therefore
\begin{equation*}
\gamma(2n,k,d)>0
\end{equation*}
for all $0\leq k \leq n$.
Now, for $n\leq k \leq n+ \lfloor {d/2} \rfloor$ we have
\begin{equation*}
\gamma(2n,k,d)=\bn{2n}{k}-\bn{2n}{k-d}+\gamma(2n,k-2d,d).
\end{equation*}
If $ k \leq n+ \lfloor {d/2} \rfloor$, then $k-2d\leq n$. Thus
$$\gamma(2n,k-2d,d)>0$$
for all $n\leq k \leq n+ \lfloor {d/2} \rfloor$. In order to finish let us show that
$$\bn{2n}{k}-\bn{2n}{k-d}\geq 0$$
for all $n\leq k \leq n+ \lfloor {d/2} \rfloor$. 
If $n\leq k \leq n+ \lfloor {d/2} \rfloor$
then
\begin{equation*}
n-d\leq k -d\leq n+ \lfloor {d/2} \rfloor-d \leq n-\lfloor{d/2}\rfloor
\end{equation*}
and
\begin{equation*}
n-\lfloor{d/2}\rfloor \leq 2n-k\leq n.
\end{equation*}
So, for $n\leq k \leq n+ \lfloor {d/2} \rfloor$ we have that
$$\bn{2n}{n-d}\leq \bn{2n}{k-d}\leq \bn{2n}{n-\lfloor{d/2}\rfloor}$$
and
$$\bn{2n}{n-\lfloor{d/2}\rfloor}\leq \bn{2n}{2n-k}\leq \bn{2n}{n}.$$
Thus, for $n\leq k \leq n+ \lfloor {d/2} \rfloor$  
\begin{align*}
\bn{2n}{k}-\bn{2n}{k-d}&=\bn{2n}{2n-k}-\bn{2n}{k-d}\\
&\geq \bn{2n}{n-\lfloor{d/2}\rfloor}-\bn{2n}{n-\lfloor{d/2}\rfloor}\\
&=0.
\end{align*}
The result is proved.
\end{proof}

\begin{thm}\label{thmind}
Let $d$ be a natural number. There exists $K$ such that for all $n\geq K$ we have
$$\ind \frac{(1+z)^{n}}{1+z^d}> \frac{n}{2}+d$$
\end{thm}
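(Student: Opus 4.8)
The plan is to argue directly at the level of the coefficients $\gamma(n,k,d)$ of $(1+z)^n/(1+z^d)$ introduced in (\ref{eq:eq1}): since $\ind$ of a power series is the first index at which the coefficient is nonpositive, the asserted inequality $\ind \frac{(1+z)^n}{1+z^d} > n/2+d$ is equivalent to the statement that $\gamma(n,k,d) > 0$ for every integer $k \leq n/2 + d$. I would prove this for $n$ even and $n$ odd separately, with the even case doing essentially all the work and the odd case following by a one-line multiplication argument.

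For $n = 2p$ the goal is $\gamma(2p,k,d) > 0$ for all $0 \leq k \leq p+d$, and I would assemble this from the two binomial lemmas already proved. Lemma \ref{lemg} covers the range $0 \leq k \leq p + \lfloor d/2 \rfloor$ with no condition on $p$, so only $p + \lfloor d/2 \rfloor < k \leq p+d$ remains. Writing such a $k$ as $k = p + d - j$, the index $j$ runs over precisely $0 \leq j \leq d - \lfloor d/2 \rfloor - 1$, which is the range of Lemma \ref{lembino}. Expanding the alternating sum,
$$\gamma(2p,\,p+d-j,\,d) = \Bigl[\bn{2p}{p+d-j} - \bn{2p}{p-j} + \bn{2p}{p-j-d} - \bn{2p}{p-j-2d}\Bigr] + \sum_{i \geq 4} (-1)^i \bn{2p}{p-(i-1)d-j},$$
the bracketed term is strictly positive for $p \geq M$ by Lemma \ref{lembino}. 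Every binomial in the remaining tail has upper index at most $p - 3d - j < p$, hence lies on the strictly increasing part of the row $\bn{2p}{\cdot}$; grouping the tail into consecutive pairs, with the positively signed (larger) index first, each pair is nonnegative, and a possible unpaired last term is itself a nonnegative binomial, so the tail is $\geq 0$. Therefore $\gamma(2p,k,d) > 0$ for all $k \leq p+d$ once $p \geq M$, which gives $\ind\frac{(1+z)^{2p}}{1+z^d} \geq p+d+1 > (2p)/2 + d$.

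For $n = 2p+1$ with $p \geq M$ I would write $\frac{(1+z)^{2p+1}}{1+z^d} = (1+z)\cdot\frac{(1+z)^{2p}}{1+z^d}$. If $\frac{(1+z)^{2p}}{1+z^d} = \sum_k b_k z^k$, then the even case gives $b_k > 0$ for $0 \leq k \leq p+d$, so the coefficients $b_k + b_{k-1}$ of $\frac{(1+z)^{2p+1}}{1+z^d}$ are positive for $0 \leq k \leq p+d$; since $p+d$ is an integer and $(2p+1)/2 + d = p+d+\tfrac12$, this yields $\ind\frac{(1+z)^{2p+1}}{1+z^d} \geq p+d+1 > (2p+1)/2 + d$. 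Taking $K = 2M$ then handles every $n \geq K$: if $n = 2p$ then $p \geq M$, and if $n = 2p+1$ then $p \geq M$ as well.

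The one genuinely delicate point is the tail estimate in the even case. One has to be sure that in the relevant window every binomial $\bn{2p}{k-id}$ with $i \geq 4$ really has upper index below $p$, so that monotonicity of the row applies, and that the alternating tail collapses to a nonnegative quantity; this is exactly why the window must stop at $k \leq p+d$ and why the sign pattern has to be tracked carefully. Once that is in hand, the theorem is just Lemma \ref{lemg} plus Lemma \ref{lembino} plus the parity trick.
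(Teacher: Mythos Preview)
Your proposal is correct and follows essentially the same route as the paper: split into even and odd $n$, cover $0\le k\le p+\lfloor d/2\rfloor$ by Lemma~\ref{lemg}, handle the remaining window $p+\lfloor d/2\rfloor<k\le p+d$ via Lemma~\ref{lembino} plus a nonnegative tail, and deduce the odd case from the even one by multiplying by $(1+z)$. The only cosmetic difference is the tail: the paper recognizes the tail as $\gamma(2p,\,p-j-3d,\,d)$ and invokes Lemma~\ref{lemg} again, whereas you pair the alternating binomials directly; both arguments are valid and equivalent.
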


\begin{proof}
First let us prove that there exists $M$ such that for all $n\geq M$ we have
$$\ind \frac{(1+z)^{2n}}{1+z^d}> \frac{2n}{2}+d$$
By equation \eqref{eq:eq1}, we need to show that there exists $M$ such that for all $n\geq M$ we have
$$\gamma(2n,k,d)>0,$$ for all $0\leq k \leq n+d$. By Lemma \ref{lemg} we have that for any $n$
\begin{equation}\label{eq:eq2}
\gamma(2n,k,d)>0,\:\text{for all $0\leq k \leq n+ \lfloor{d/2}\rfloor$.}
\end{equation}
It remains to show that there exists $M$ such that for all $n\geq M$
$$\gamma(2n,n+d-j,d)>0$$
for all $0\leq j \leq d-\lfloor{d/2}\rfloor-1$. For all $0\leq j \leq d-\lfloor{d/2}\rfloor-1$, we have that 
\begin{align*}
\gamma(2n,n+d-j,d)&=\bn{2n}{n+d-j}-\bn{2n}{n-j}\\&+\bn{2n}{n-j-d}-\bn{2n}{n-j-2d}
\\
&+\gamma(2n,n-j-3d,d).
\end{align*}
By Lemma $\ref{lembino}$, we have that there exist $M$ such that for all $n\geq M$
$$\bn{2n}{n+d-j}-\bn{2n}{n-j}+\bn{2n}{n-j-d}-\bn{2n}{n-j-2d}>0$$
for all $0\leq j \leq d-\lfloor{d/2}\rfloor-1$. Also, by \eqref{eq:eq2} we have that
$$\gamma(2n,n-j-3d,d)>0,\:\text{for all $0\leq j \leq d-\lfloor{d/2}\rfloor-1$.}$$
Therefore, for all $n\geq M$
\begin{equation}\label{eq:eq3}
\gamma(2n,k,d)>0,\:\text{for all $n+\lfloor {d/2}\rfloor +1\leq k \leq n+d$.}
\end{equation}
Thus from \eqref{eq:eq2} and \eqref{eq:eq3} we have that for all $n\geq M$ 
$$\gamma(2n,k,d)>0,$$ for all $0\leq k \leq n+d$. In other words, for all $n\geq M$ 
\begin{equation}
\label{eq:in1}
\ind \frac{(1+z)^{2n}}{1+z^d}\geq \frac{2n}{2}+d+1>\frac{2n}{2}+d.
\end{equation}
So, for all $n\geq M$
\begin{equation}\label{eq:in2}
\ind \frac{(1+z)^{2n+1}}{1+z^d}\geq \ind \frac{(1+z)^{2n}}{1+z^d}\geq \frac{2n}{2}+d+1>\frac{2n+1}{2}+d.
\end{equation} 
From \eqref{eq:in1} and \eqref{eq:in2} we have that
$$\ind \frac{(1+z)^n}{1+z^d}>\frac{n}{2}+d$$
for all $n\geq K$, where $K=2M+1$.
\end{proof}
Now, let us prove the main theorem of this section.
\begin{thm}\label{nonexistance}
Let $\bfd=(d_1,\dots,d_m)$, with $d_j\geq 2$ for some $1 \leq j \leq m$. Then there exists an $N$ such that for all $n\geq N$, there are no semi-regular sequences of type $\bfd$ in $B^{(n)}$.
\end{thm}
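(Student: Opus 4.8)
The plan is to follow the strategy sketched at the start of the section: bound the function $\tau_{\bfd}(n)=\ind\frac{(1+z)^n}{\prod_{i=1}^m(1+z^{d_i})}$ below by an affine function of slope strictly greater than $1/2$, and then play this off against the first fall degree estimate of Theorem \ref{ffdelement}.

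First I would produce, for each $i=1,\dots,m$, a single integer $n_i$ witnessing $\tfrac{1}{n_i}\bigl(\ind\tfrac{(1+z)^{n_i}}{1+z^{d_i}}-d_i\bigr)>\tfrac{1}{2}$. When $d_i=1$ this is immediate, since $(1+z)^n/(1+z)=(1+z)^{n-1}$ has index $n$, so the ratio equals $1-\tfrac{1}{n}$, which exceeds $\tfrac{1}{2}$ already for $n_i=3$. When $d_i\geq 2$ this is precisely Theorem \ref{thmind}: there is a $K$ with $\ind\tfrac{(1+z)^n}{1+z^{d_i}}>\tfrac{n}{2}+d_i$ for all $n\geq K$, so $n_i=K$ works. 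Setting $r=\min_i\tfrac{1}{n_i}\bigl(\ind\tfrac{(1+z)^{n_i}}{1+z^{d_i}}-d_i\bigr)$ gives $r>\tfrac{1}{2}$ together with $r\leq\tfrac{1}{n_i}\bigl(\ind\tfrac{(1+z)^{n_i}}{1+z^{d_i}}-d_i\bigr)$ for every $i$, so Theorem \ref{indlem3} furnishes a constant $c$ with $\tau_{\bfd}(n)\geq rn+c$ for all $n$.

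Since $r>\tfrac{1}{2}$, the quantity $rn+c-\bigl(\tfrac{n}{2}+\tfrac{d_j}{2}+1\bigr)=(r-\tfrac{1}{2})n+c-\tfrac{d_j}{2}-1$ tends to $+\infty$, so I can fix $N$ with $\tau_{\bfd}(n)\geq rn+c>\tfrac{n+d_j+2}{2}$ for all $n\geq N$. Now suppose $\lambda_1,\dots,\lambda_m$ were a semi-regular sequence of type $\bfd$ in $B^{(n)}$ with $n\geq N$. By Theorem \ref{bfsthm}(3) the ideal $I=(\lambda_1,\dots,\lambda_m)$ has Hilbert series $[T_{\bfd,n}(z)]$, a polynomial with positive coefficients in degrees $0,\dots,\ind T_{\bfd,n}(z)-1$ and zero thereafter, whence $\ind(\lambda_1,\dots,\lambda_m)=\ind T_{\bfd,n}(z)=\tau_{\bfd}(n)>\tfrac{n+d_j+2}{2}$. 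On the other hand the rank $s$ of $\lambda_j$ satisfies $s\leq n$, and since $d_j\geq 2$ Theorem \ref{ffdelement} gives $\ffd(\lambda_j)\leq\tfrac{s+d_j+2}{2}\leq\tfrac{n+d_j+2}{2}<\ind(\lambda_1,\dots,\lambda_m)$. Then Theorem \ref{indffd2}, applied with $i=j$, says $\lambda_1,\dots,\lambda_m$ is not semi-regular, a contradiction; hence no semi-regular sequence of type $\bfd$ exists in $B^{(n)}$ for $n\geq N$.

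The genuinely hard work is already behind us: the combinatorial estimate $\ind\tfrac{(1+z)^n}{1+z^d}>\tfrac{n}{2}+d$ for large $n$ (Theorem \ref{thmind}, resting on Lemma \ref{lembino}) and the transfer machinery propagating an affine lower bound from the single-degree functions $\tau_{(d_i)}$ to $\tau_{\bfd}$ (Theorems \ref{indlem4}, \ref{keylem}, \ref{indlem3}). The only points requiring care in this final argument are the uniform choice of $r$ over the coordinates of $\bfd$ and the translation of the Hilbert-series characterization of semi-regularity into the identity $\ind(\lambda_1,\dots,\lambda_m)=\tau_{\bfd}(n)$; neither should present a real obstacle.
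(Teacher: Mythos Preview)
Your proposal is correct and follows essentially the same route as the paper's own proof: produce for each $d_i$ an $n_i$ with $\tfrac{1}{n_i}\bigl(\ind\tfrac{(1+z)^{n_i}}{1+z^{d_i}}-d_i\bigr)>\tfrac12$, take $r$ to be the minimum, invoke Theorem~\ref{indlem3} to get $\tau_{\bfd}(n)\geq rn+c$, and then contrast $\ind(\lambda_1,\dots,\lambda_m)=\tau_{\bfd}(n)$ with the first-fall-degree bound from Theorem~\ref{ffdelement} via Theorem~\ref{indffd2}. The only cosmetic difference is that you treat the case $d_i=1$ by a direct computation, whereas the paper simply applies Theorem~\ref{thmind} uniformly (it holds for $d=1$ as well); this is harmless.
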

\begin{proof}
By Theorem \ref{thmind}, for all $0\leq i \leq m$ there exists an $n_i$ such that
$$  \ind \frac{(1+z)^{n_i}}{1+z^{d_i}}> \frac{n_i}{2}+d_i.$$
Set
$$ r_i=\frac{1}{n_i}\left(\ind \frac{(1+z)^{n_i}}{1+z^{d_i}}-d_i \right)>\frac{1}{2}$$
and let $r=\min{r_i}$. By Theorem \ref{indlem3} there exists $c$ such that 
$$\tau_{\bfd}(n)\geq rn+c,\:\text{for all $n$}.$$
Suppose that for some $j$ we have that $d_j\geq2$. Since $r>1/2$, there exists $N$ such that for all $n\geq N$
$$\tau_{\bfd}(n)>\frac{n}{2}+\frac{d_j}{2}+1.$$
Suppose $\lambda_1,\dots,\lambda_m$ is a semi-regular sequence of homogeneous polynomials of degrees $d_1,\dots,d_m$ in $B^{(n)}$, $n\geq N$. Thus, by Theorem \ref{bfsthm}, $\ind(\lambda_1,\dots,\lambda_m)=\tau_{\bfd}(n)>(n+d_j+2)/2$. Since $d_j\geq2$, Theorem \ref{ffdelement} tells us that 
$$D_{\mathrm{ff}}(\lambda_j)\leq \frac{n+d_j+2}{2}.$$
This would imply that $D_{\mathrm{ff}}(\lambda_j)<\ind(\lambda_1,\dots,\lambda_m)$, but by Theorem \ref{indffd2} this is not possible for a semi-regular sequence.
\end{proof}

\section{Conclusion}

Since the introduction of the concept of a semi-regular sequence over $\gft$, it has been conjectured that such sequences are in some sense ``generic''. However little concrete progress has been made towards proving this conjecture. In fact even in one of the simplest and most important cases, that of quadratic sequences of length $n$ in $n$ variables, the question of the {\em existence} of semi-regular sequences for all $n$ remains open. In this paper we established four results about the existence of semi-regular sequences
\begin{enumerate}
  \item The proportion of  sequences of homogeneous elements in $n$ variables that are semi-regular tends to one as $n$ tends to infinity.
  \item A homogeneous element of degree $d$ can only be semi-regular if $n\leq 3d$.
  \item We established precisely when the symmetric element $$\sigma_{d,n} = \sum_{1\leq i_1<\cdots<i_d\leq n}x_{i_1}\cdots x_{i_d}$$ is semi-regular. In particular when $d=2^t$, $\sigma_{d,n}$ is semi-regular for all $d \leq n \leq 3d$.
  \item Sequences of a fixed length $m$ and fixed degree ${\bf d} =(d_1, \dots, d_m)$ are never semi-regular for sufficiently large $n$.
\end{enumerate}
While (1) is in a sense a statement that semi-regular sequences are generic, it doesn't imply that semi-regular sequences are `dense' in any way. For instance (4) suggests that for any $n$ that there is an $M$ such that there are no semi-regular sequences of length $m \leq M$. More importantly (1) gives us no information about special cases such as sequences of length $n$ in $n$ variables. What we would like to show is something like the following. There exists an $\epsilon$ such that if $m(n)=\lfloor \alpha n \rfloor + c$, then the proportion of sequences of length $m(n)$ in $n$ variables tends to one as $n$ tends to infinity whenever $\alpha >\epsilon$. This appears to be a hard problem. There do appear to be sporadic values of $(n,m)$ for which the proportion of semi-regular elements is low (such as $(n,m)=(10,12),(11,15)$ and $(15,14)$). These low proportions correspond precisely to values of $(n,m)$ for which the coefficient of $(1+z)^n/(1+z^2)^m$ is zero at the index. If this phenomenon can occur for arbitrarily large values of $n$ and $m$, then it is possible that Conjecture \ref{newconj} will be false.

\end{document}